\def\marg#1#2{\def\marginnotetextwidth{\the\textwidth}\marginnote{\bf #1}{\bf #2}}
\numberwithin{equation}{subsection}
\newtheorem{theorem}{Theorem}[section]
\newtheorem{theorem*}{Theorem}
\newtheorem{prop}[theorem]{Proposition}
\newtheorem{lemma}[theorem]{Lemma}
\newtheorem{cor}[theorem]{Corollary}
\theoremstyle{definition}
\newtheorem{example}[theorem]{Example}
\newtheorem{definition}[theorem]{Definition}
\newtheorem*{definition*}{Definition}
\newtheorem{remark}[theorem]{Remark}
\newcommand{\C}{{\mathbb C}}
\newcommand{\N}{{\mathbb N}}
\newcommand{\Q}{{\mathbb Q}}
\newcommand{\GL}{{\operatorname{\bf GL_n}}}
\newcommand{\W}{{\operatorname{\bf W}}}
\newcommand{\w}{{\operatorname{\bf w}}}
\newcommand{\e}{{\mathfrak{e}}}
\newcommand{\rank}{{\operatorname{rank}}}
\newcommand{\pr}{{\operatorname{pr}}}
\newcommand{\de}{{\operatorname{def}}}
\newcommand{\Imm}{\operatorname{Im}}
\newcommand{\sslash}{\mathbin{/\mkern-6mu/}}
\title[Word maps\dots]{Word maps, word maps with constants and representation varieties of one-relator groups}
\author[Gordeev, Kunyavski\u\i , Plotkin] {Nikolai Gordeev, Boris Kunyavski\u\i , Eugene Plotkin}
\dedicatory{To Efim Zelmanov on the occasion of his 60th birthday}
\address{Gordeev: Department of Mathematics, Herzen State
Pedagogical University, 48 Moika Embankment, 191186, St.Petersburg,
RUSSIA} \email{nickgordeev@mail.ru}
\address{Kunyavski\u\i : Department of
Mathematics, Bar-Ilan University, 5290002 Ramat Gan, ISRAEL}
\email{kunyav@macs.biu.ac.il}
\address{Plotkin: Department of Mathematics, Bar-Ilan University, 5290002 Ramat Gan,
ISRAEL} \email{plotkin@macs.biu.ac.il}
\renewcommand{\GL}{\mathrm{GL}}
\newcommand{\SL}{\mathrm{SL}}
\newcommand{\Ma}{\mathrm{M}}
\newcommand{\Res}{\mathrm{Res}}
\newcommand{\PSL}{\mathrm{PSL}}
\newcommand{\PGL}{\mathrm{PGL}}
\def\<{\langle}
\def\>{\rangle}
\def\tilde{\widetilde}
\def\phi{\varphi}
\def \Res{\operatorname{Res}}
\def \W{\mathcal W}
\def \T{\mathcal T}
\def\e{\mathfrak{e}}
\def\m{\mathfrak{m}}
\def\n{\mathfrak{n}}
\def\tr{\operatorname{tr}}
\def\w{\tilde{w}}
\def\we{\tilde{w}^*}
\def\al{\alpha}
\def\be{\beta}
\def\gam{\gamma}
\def\de{\delta}
\def\la{\lambda}
\begin{document}

\begin{abstract}
We consider word maps and word maps with constants on a simple algebraic group $G$. We present results on the images of
such maps, in particular, we prove a theorem on the dominance of “general” word maps with constants,
which can be viewed as an analogue of a well-known theorem of Borel on the dominance of genuine word
maps. Besides, we establish a relationship between the existence of unipotents in the image of the map induced by $w\in F_m$ and
the structure of the representation variety $R(\Gamma_w, G)$ of the group $\Gamma_w = F_m/\left<w\right>$.

\end{abstract}

\numberwithin{equation}{section}

\parindent0pt

\maketitle
\section*{Introduction}

{\it Word maps.} Let $F_m$ be the free group of rank $m$. Fix its
generators $x_1,\dots ,x_m$. Then for any word $w=w(x_1,\dots
,x_m)\in F_m$ and any group $G$ one can define the {\it word map}
$$\w\colon G^m\rightarrow G$$
by evaluation. Namely, $\w(g_1, \dots, g_m)$ is obtained by
substituting $g_i$ in place of $x_i$ and $g_i^{-1}$ in place of
$x_i^{-1}$ followed by computing the resulting value $w(g_1, \dots,
g_m)$.

Word maps have been intensely studied over at least two past decades
in various contexts (see, e.g., \cite{Se}, \cite{Shal}, \cite{BGK},
\cite{KBKP} for surveys). In this paper, we consider the case where
$G=\mathcal G(K)$ is the group of $K$-points of a simple linear
algebraic group $\mathcal G$ defined over an algebraically closed
field $K$. We are mainly interested in studying the image of $\w$.
Borel's theorem \cite{Bo1} says that $\w$ is dominant, i.e., its
image contains a Zariski dense open subset of $G$. However, $\w$ may
not be surjective: this may happen in the case of power maps on
groups with non-trivial centre (say, squaring map on $\SL(2,\mathbb
C)$) and, if $\mathcal G$ is not of type {\sc A}, even on adjoint
groups, see \cite{Ch1}, \cite{Ch2}, \cite{Stei}. For the adjoint
groups of type {\sc A}, the surjectivity problem is wide open, even
in the case of groups of rank 1, and even for words in two
variables.


The goal of the present paper is two-fold. First, we extend our
viewpoint on the dominance and surjectivity problems from genuine
word maps to word maps with constants and establish a partial,
``generic'' analogue of Borel's dominance theorem. Another extension
concerns a continuation of the word map $\w\colon
\GL_n(K)^m\rightarrow \GL_n(K)$ to the map $\w^* \colon
\Ma_n(K)^m \rightarrow \Ma_n(K)$. Being interesting in its own right,
this method yields, as a by-product, a new proof of some results of
Bandman and Zarhin \cite{BZ}, who proved the surjectivity of $\w$
for $G= \PGL_2(K)$ in the case where $K$ is an algebraically closed
field of characteristic zero, $m=2$, and $w\in F_m\setminus F_m^2$,
where $F^1_m = [F_m, F_m], \dots, F_m^i = [F_m^{i-1}, F_m^{i-1}],
\dots$.

Our second goal consists in studying the geometric structure of the
representation variety of the one-relator group
$\Gamma_w:=F_m/\left<w\right>$ with an eye towards applying the data
on its irreducible components to searching unipotent elements in the
image of the word map. This often allows one to prove the
surjectivity of the word map on $\PGL_2(K)$. We give a non-trivial
example of such a $w\in F_2$, in the spirit of \cite{BZ} but
avoiding their computer calculations.

\bigskip

{\it Word maps with constants.} Let $G$ be a group, let
$$
\Sigma = \{\sigma_1, \dots, \sigma_r\,\,\,\mid\,\,\,\sigma_i\in
G\setminus Z(G)\,\,\,\text{for every}\,\,\,i = 1, \dots, r\},
$$
and let $w_1, \dots, w_{r+1} \in F_m$ be reduced words. The expression
$$
w_\Sigma = w_1\sigma_1w_2\sigma_2\cdots w_r\sigma_rw_{r+1}
$$
is called {\it a word with constants} (or a generalized monomial) if
the sequence $w_2, \dots, w_r$ does not contain the identity word.
We will view a word $w\in F_m$ as a word with constants $w_\Sigma$
with $\Sigma =\emptyset$ and $w=w_1$.

A word with constants also induces a map
$$
\w_\Sigma \colon G^m \rightarrow G.
$$

Let now $G=\mathcal G(K)$ where $\mathcal G$ is a simple algebraic
group defined over an algebraically closed field $K$. In general, the image
$\Imm_{\w_\Sigma}$ is not dense in $G$ as in Borel's Theorem.
However, there are examples when this image is dense. For instance,
the problem of the density of $\Imm_{\w_\Sigma}$ is related to the
definition of {\it covering numbers} for products of conjugacy
classes (see \cite{G1}). Namely, let $r = m$ and let
$$
w_\Sigma = x_1 \sigma_1x_1^{-1} x_2 \sigma_2 x_2^{-1}\cdots x_m \sigma_m x_m^{-1}.
$$
Then $\Imm_{\w_\Sigma} = C_1C_2\cdots C_m$ where $C_i$ is the
conjugacy class of $\sigma_i$. Thus,
$$
\overline{\Imm_{\w_\Sigma}} = \overline{ C_1C_2\cdots C_m}
$$
where $\overline{X}$ is the Zariski closure of $X\subset G$. In
\cite{G1} it has been proved that $\overline{\Imm_{\w_\Sigma}} = G$
if $\mid \Sigma\mid >  2\,\rank\, G +1$.

In the present paper, we prove (Theorem \ref{th1}) that for a
``general'' word with constants $w_\Sigma$ the induced map
$\w_\Sigma$ turns out to be dominant.

Note that words with constants are also related to other problems in
the theory of algebraic groups (see, e.g., \cite{G2}, \cite{KT}, \cite{Ste1},
\cite{Ste2}).

\bigskip




{\it Word maps and representation varieties.} 

For a simple algebraic
group $\mathcal G$ defined over an algebraically closed field $K$
one can define the quotient map $\pi \colon \mathcal G\rightarrow
\mathcal G\sslash \mathcal G\thickapprox T/W$ where $\mathcal
G\sslash \mathcal G$ and $T/W$ are categorical quotients with respect to the
action of $\mathcal G$ on $\mathcal G$ by conjugation and the natural action of  the Weyl group $W$ on  a maximal torus $T$,
respectively, see \cite{SS} for details.
Then we have the map
$$
\pi\circ \w\colon \mathcal G^m\rightarrow T/W.
$$


We denote by $\w_K\colon
\mathcal G(K)^m\to \mathcal G(K)$ the induced word map on the group
$G:=\mathcal G(K)$. Borel's theorem implies that $\Imm (\pi\circ
\w_K)$ is dense in $T/W$. 
However, we do not know when $\Imm (\pi\circ \w_K) = T(K)/W$.
Moreover, {\it we have no example} when
$\Imm (\pi\circ \w_K)\ne T(K)/W$.
On the other hand, we have not so many examples when $\Imm (\pi\circ
\w_K) = T(K)/W$. The latter equality holds, say, for the Engel words
$w =[\cdots [[x,y],y],\cdots y]$ (see \cite{G3}) and for the power
words $w = x^k$. Bandman and Zarhin proved in \cite{BZ} that $\Imm
(\pi\circ \w_K) = T(K)/W$ when $G = \SL_2(K)$.


Note that the equality $\Imm (\pi\circ
\w_K) = T(K)/W$ implies that for every semisimple element $s\in
T(K)$ one can find an element of the form $su$ in the image of
$\w_K$ where $u$ is a unipotent element of $G$ which commutes with
$s$, see \cite{SS}. This implies, in its turn, that
$$\Imm (\pi\circ \w_K) =
T(K)/W\Rightarrow \text{ all regular semisimple elements of
}\,\,G \text{ belong to} \Imm \w_K.
$$
Thus, if  $\mathcal G = \SL_2$  then every element of $G$ belongs to
$\Imm \w_K$ except, possibly, $-1, \pm u$ where $u$ is a unipotent
element. Hence, if $\mathcal G = \PGL_2$ then $\Imm \w_K \supseteq
G \setminus \{u\}$. Then we have only one obstacle to proving the
surjectivity of $\w_K$ for $\PGL_2(K)$, namely, we have to prove the
existence of a non-trivial unipotent element in the image. The
existence of unipotent elements in $\Imm \w_K$ is somehow related to
the structure of the representation variety. Namely,  denote
$$
\W_w = \w^{-1}(1),\,\,\,\T_w = (\pi\circ \w)^{-1}(\pi(1))
$$
(here we denote by $1$ the identity of $G$). Then $\W_w$ is the
representation variety $R(\Gamma, G)$ where $\Gamma=\Gamma_w$ is the
$m$-generated group with one defining relation $w$ (see, e.g.,
\cite{LM}). The set $\T_w\subset G^m$ is the set of $m$-tuples
$(g_1, \dots, g_m)\in G^m$ such that
$$
w(g_1, \dots, g_n) = u\,\,\,\text{is a unipotent element of
}\,\,\,G,
$$
that is, $\T_w$ is the preimage of the unipotent subvariety of $G$.
We have an inclusion $\W_w\subseteq \T_w$ of affine subsets of
$G^m$, and the inequality $\W_w\ne \T_w$ is a sufficient condition
for the existence of a non-trivial unipotent element in $\Imm \w$.
We calculate several examples of words in $F_2$ for the group
$\mathcal G = \SL_2$. In all these examples  $\W_w\ne \T_w$ but
there are cases when some irreducible components of $\W_w$ coincide
with a irreducible component of $\T_w$. Possibly, the investigation
of structure properties of the representation varieties $\W_w$ would
give an answer to the question on the existence a non-trivial
unipotent element in $\SL_2(\C)$ lying in the image of $\w$.


\bigskip

Some results of this paper were announced in \cite{GKP}.

\bigskip

{\it Notation and conventions.}

\medskip

Below, if not stated otherwise, $K$ is an algebraically closed field
and $\mathcal G$ is an algebraic group
defined over $K$, so we {\it identify the group $\mathcal G$ with
$G:=\mathcal G(K)$.}

\bigskip
We denote the identity element of $G$ by $1$;

$N_G(H)$ denotes the normalizer of $H$ in $G$;

$R\ast Q$ denotes the free product of groups $R$ and $Q$;

for a group $\Delta$ and $x, y \in \Delta$, we use the
symbol $x \backsim y$ if $x$ is conjugate to $y$ in $\Delta$;


$\mathbb G_{\text{\rm{a}}}$, $\GL_n$, $\SL_n$ denote the additive,
general linear, special linear groups;

$\Ma_n(K)$ is the set of $n\times n$-matrices over $K$; 

$I_n\in \Ma_n(K)$ is the identity matrix;

for $A\in \Ma_n(K)$ by $A^*$ we denote the adjugate matrix, i.e., the
matrix such that $AA^* = A^*A = \det(A) I_n$ (note that for a
generic matrix $M = (x_{ij}) $ the entries of the  matrix $M^*$ are
homogeneous polynomials in $\{x_{ij}\}$ of degree $n-1$);

for a map $f\colon X\to Y$ and a subset $S\subset X$ we denote by
${\text{\rm{Res}}}_Sf$ the restriction of $f$ to $S$.

\bigskip

Let $X$ be an algebraic variety defined over $K$, and let $\{X_i\}$
be a countable set of proper closed subsets $X_i\subsetneqq X$. Then
we call the set $X\setminus \cup_{i}X_i$ {\it a quasi-open subset of
} $X$. (In topology and real analysis such sets are often called
``$G_{\delta}$-sets''.)

\bigskip


If $X$ is an algebraic variety and $Y\subset X$, then $\overline{Y}$ is the Zariski closure of $Y$ in $X$.

\section{Word maps with constants}

Let $\w_\Sigma \colon G^m \rightarrow G$ be a word map with
constants of a simple algebraic group $G$. Note that there are words
with constants $w_\Sigma$ such that $\Imm \w_\Sigma  = 1$ (so-called
identities with constants, see, e.g., \cite{G2}). Such identities
exist if and only if the root system of $G$ contains roots of
different length. However, even in the cases when all roots are of
the same length we cannot expect the analogue of the Borel Theorem
$\overline{\Imm \w_\Sigma} = G $. It would be interesting to
understand the influence of the properties of the set $\Sigma$ on
the dimension of $\overline{\Imm w_\Sigma}$. In such a generality,
this seems to be a difficult question, and we start with considering
some particular situations.


\begin{theorem} \label{pr1}
Let $w_1, \dots, w_{r+1}\in F_m$ be words where $w_2, \dots, w_r \ne
1$. There exists an open set $\mathcal U\subset G^r$ such that for
every $\Sigma = (\sigma_1, \ldots, \sigma_r)\in \mathcal U$ and for
the word with constants $w_\Sigma = w_1\sigma_1w_2\sigma_2\cdots
w_r\sigma_rw_{r+1}$,  the dimension $\dim \overline{\Imm \w_\Sigma}$
is a fixed number $\mathfrak{d} = \mathfrak{d}(w_1, \dots, w_{r+1})$
depending only on $w_1, \dots, w_{r+1}\in F_m$. Moreover,
$$\dim \overline{\Imm \w_{\Sigma^\prime}}\leq \mathfrak{d}$$
for every $\Sigma^\prime = (\sigma^\prime_1, \dots, \sigma^\prime_r)
\in G^r$ (here we admit the possibility $\sigma^\prime_i \in Z(G)$).
\end{theorem}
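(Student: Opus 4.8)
The plan is to view the whole construction as a single morphism of varieties and extract the generic constancy of the image dimension from semicontinuity. Concretely, consider the product variety $G^r\times G^m$ and the "evaluation" morphism
$$
\Phi\colon G^r\times G^m\longrightarrow G,\qquad \Phi\big((\sigma_1,\dots,\sigma_r),(g_1,\dots,g_m)\big)=w_1(g)\sigma_1w_2(g)\sigma_2\cdots w_r(g)\sigma_rw_{r+1}(g).
$$
This is a morphism of algebraic varieties (a composition of the word maps $\w_i\colon G^m\to G$ with group multiplication and the projections). For a fixed $\Sigma\in G^r$, the fibre-direction restriction $\Phi(\Sigma,-)\colon G^m\to G$ is exactly the word-map-with-constants $\w_\Sigma$, so $\overline{\Imm\w_\Sigma}=\overline{\Phi(\{\Sigma\}\times G^m)}$.

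First I would compose $\Phi$ with the projection $p\colon G^r\times G^m\to G^r$ and apply Chevalley's theorem on fibre dimension to the morphism $p$ restricted to a suitable locally closed piece: more precisely, let $Z=\overline{\Imm\Phi'}$ where $\Phi'=(p,\Phi)\colon G^r\times G^m\to G^r\times G$ and look at the first projection $q\colon Z\to G^r$. For $\Sigma\in G^r$ the fibre $q^{-1}(\Sigma)$ is $\{\Sigma\}\times\overline{\Imm\w_\Sigma}$ up to closure, so $\dim q^{-1}(\Sigma)=\dim\overline{\Imm\w_\Sigma}$. The function $\Sigma\mapsto\dim q^{-1}(\Sigma)$ is upper semicontinuous on the image and takes its minimal value $\mathfrak d$ on a dense open subset $\mathcal U$ of the image of $q$; since the word maps $\w_i$ are dominant by Borel's theorem, $q$ is dominant and $\mathcal U$ is a nonempty open subset of $G^r$. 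This simultaneously gives the fixed generic value $\mathfrak d$ on $\mathcal U$ and the inequality $\dim\overline{\Imm\w_{\Sigma'}}\le\mathfrak d$ is \emph{backwards} from what semicontinuity gives directly — so here is the subtlety.

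The main obstacle is the direction of the inequality: upper semicontinuity of fibre dimension says the dimension can only \emph{jump up} on closed subsets, so a priori special $\Sigma'$ (including those with $\sigma_i'\in Z(G)$) could give \emph{larger} image dimension, not smaller, which is the opposite of the claim. To fix the orientation I would instead argue directly that $\overline{\Imm\w_{\Sigma'}}\subseteq\overline{\Imm\Phi}$ for every $\Sigma'$, hence $\dim\overline{\Imm\w_{\Sigma'}}\le\dim\overline{\Imm\Phi}$; and then show $\dim\overline{\Imm\Phi}=\mathfrak d$ by proving that the generic fibre of $q\colon Z\to G^r$ already has dimension $\dim\overline{\Imm\Phi}-\dim(\text{closure of image of }q)$ but the image of $q$ is all of $G^r$ up to closure, while the \emph{total} space $Z$ dominates $G^r\times(\text{something of dimension }\mathfrak d)$ only through the generic $\Sigma$. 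In other words: $\dim\overline{\Imm\w_\Sigma}$ for generic $\Sigma$ equals $\dim\overline{\Imm\Phi}-\dim G^r$ when $q$ has all fibres of the generic dimension, and $\dim\overline{\Imm\Phi}=\dim G^r+\mathfrak d$, while for \emph{every} $\Sigma'$ one has $\overline{\Imm\w_{\Sigma'}}\subseteq\overline{\Imm\Phi}$ but also, by translating $\Sigma'$ to a nearby generic $\Sigma$ via the $G$-action, the image for a special parameter is contained (up to closure and conjugacy/translation, which preserve dimension) in the generic image. I expect the cleanest way to make this last translation rigorous is to note that left/right multiplication by a fixed element of $G$ is an automorphism of $G$, so replacing $\sigma_i'$ by $\sigma_i' t_i$ for generic $t_i$ moves $\Sigma'$ into $\mathcal U$ while only translating the image, and a limit/specialization argument (the image of $\w_{\Sigma'}$ is a specialization of the images $\w_{\Sigma'(t)}$ as $t\to 1$) forces $\dim\overline{\Imm\w_{\Sigma'}}\le\mathfrak d$ by upper semicontinuity applied now in the \emph{correct} direction along the curve $t\mapsto\Sigma'(t)$. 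Assembling these — the morphism $\Phi$, Chevalley fibre-dimension and semicontinuity for the generic constancy, and the translation-plus-specialization trick for the universal upper bound — yields the theorem.
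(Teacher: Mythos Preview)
Your setup is essentially the same as the paper's: your $\Phi'=(p,\Phi)\colon G^r\times G^m\to G^r\times G$ is exactly the map the paper calls $\Phi$ (built from the word $w^Y=w_1y_1\cdots w_ry_rw_{r+1}\in F_{m+r}$), and your $Z$ is the paper's $X=\overline{\Imm\Phi}$. The divergence, and the gap, is in which map you feed to the fibre-dimension theorem.

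You apply semicontinuity to the projection $q\colon Z\to G^r$ from the \emph{image}, and you correctly notice that this gives the inequality in the wrong direction. Your two attempted repairs do not work. The translation claim is false: replacing $\sigma_i'$ by $\sigma_i't_i$ does \emph{not} translate $\Imm\w_{\Sigma'}$ by a fixed element, because the word fragments $w_j(g)$ sitting to the right of $\sigma_i'$ depend on $g$; e.g.\ already for $r=1$ one has $w_1(g)\sigma_1't_1w_2(g)=\big(w_1(g)\sigma_1'w_2(g)\big)\cdot\big(w_2(g)^{-1}t_1w_2(g)\big)$, and the right factor varies with $g$. The specialization argument has exactly the same orientation problem you started with (a flat family of closures can have special fibre of larger dimension). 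There is also a hidden issue: for special $\Sigma'$ the fibre $q^{-1}(\Sigma')$ need not equal $\{\Sigma'\}\times\overline{\Imm\w_{\Sigma'}}$, since taking closure in $G^r\times G$ can add points over $\Sigma'$ coming from nearby parameters.

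The paper's fix is simple and you are one move away from it: apply the fibre-dimension theorem to $\Phi'$ itself, i.e.\ to fibres \emph{in the source} $G^r\times G^m$. There is an open $\mathcal V\subset Z$ on which every component of $(\Phi')^{-1}(v)$ has a fixed dimension $\mathfrak f$, and for \emph{all} $u\in\Imm\Phi'$ every component of $(\Phi')^{-1}(u)$ has dimension $\ge\mathfrak f$. Now observe that for any $\Sigma'$ the fibre $\w_{\Sigma'}^{-1}(h)$ is canonically identified with $(\Phi')^{-1}(\Sigma',h)$. Hence every fibre of $\w_{\Sigma'}$ has dimension $\ge\mathfrak f$, so $\dim\overline{\Imm\w_{\Sigma'}}\le m\dim G-\mathfrak f$. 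For $\Sigma$ lying under $\mathcal V$ the generic fibre of $\w_\Sigma$ has dimension exactly $\mathfrak f$, giving $\dim\overline{\Imm\w_\Sigma}=m\dim G-\mathfrak f=:\mathfrak d$. This yields both the generic constancy and the universal upper bound with the correct sign, with no translation or specialization tricks needed.
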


\begin{proof}
Define the word
$$
w^Y  = w_1y_1w_2y_2\cdots w_ry_rw_{r+1}\in F_{m+r} = \langle x_1,
\dots, x_m, y_1, \dots, y_r\rangle.
$$
We have dominant maps
$$
\w^Y\colon G^{m+r}\rightarrow G\,\,\,\,\text{and}\,\,\,p_Y\colon
G^{m+r}\rightarrow G^r
$$
where $p_Y$ is the projection onto the components $m+1, \dots, m+r$.
Consider the map
$$
\Phi\colon G^{m+r}\stackrel{(\w^Y, p_Y)}{\rightarrow} G\times G^r.
$$
Let $X =\overline{\Imm \Phi}\subset G\times G^r$, and let
$p^\prime_Y\colon X \rightarrow G^r$ be the projection onto $G^r$.
Then $p_Y^\prime(X) = G^r$ because for every $r$-tuple
$\Sigma=(\sigma_1, \dots, \sigma_r) \in G^r$ there is a non-empty
set
$$
Z_\Sigma = \{(\w_\Sigma(g_1, \dots, g_m), \sigma_1, \dots,
\sigma_r)\,\,\,\mid\,\,\, (g_1, \dots, g_m )\in G^m\}\subset X.
$$


One can show that there exists an open subset $\mathcal V$ of $X$
such that:
\begin{itemize}
\item[(a)] $\mathcal V\subset \Imm \Phi$,
\item[(b)] for every $v \in \mathcal V$ the dimension of every
irreducible component of the preimage $\Phi^{-1} (v) $ is a fixed
number $\mathfrak{f}$,
\item[(c)] for every $u \in \Imm \Phi$ the dimension of every
irreducible component of the preimage $\Phi^{-1}(u)$ is greater than
or equal to $\mathfrak{f}$
\end{itemize}
(cf., e.g., \cite[Chapter~I, n.~6, Th.~7]{Shaf}). Let now $\mathcal U\subset
G^r$ be an open subset contained in $p_Y^\prime(\mathcal V)$, and
let $\Sigma = (\sigma_1, \dots, \sigma_r)\in \mathcal U$. Let $v\in
\mathcal V$ be such that $p^\prime_Y(v) = \Sigma$. Then $v =
(\w_\Sigma(g_1, \dots, g_m), \sigma_1, \dots, \sigma_r)$ for some
$(g_1, \ldots, g_m)\in G^m$, and the dimension of every irreducible
component of the preimage $\Phi^{-1} (v) $ is equal to
$\mathfrak{f}$, see (b). Further, the Zariski closure
$\overline{Z}_\Sigma$ is an irreducible closed subset of $X$.
Indeed, $Z_\Sigma$ is the image of an irreducible variety under the
morphism $\Phi_\Sigma\colon G^m\rightarrow G\times G^r$ given by the
formula $\Phi_\Sigma (x_1, \dots, x_m) = (w_\Sigma(x_1, \dots,
x_m), \sigma_1, \dots, \sigma_r)$ (in other words,  $\Phi_\Sigma$ is
the restriction of $\Phi$ to $G^m \times (\sigma_1, \dots, \sigma_r)
\subset G^m\times G^r$). Note that $v \in Z_\Sigma \cap
\mathcal V$. Hence there is an open subset $\mathcal W$ of
$\overline{Z}_\Sigma$ such that $v\in \mathcal W \subset \mathcal
V$. Since $\mathcal W \subset \mathcal V$, the dimension of every
irreducible component of $\Phi^{-1}(v^\prime)$ for every point
$v^\prime \in \mathcal W$ is equal to $\mathfrak{f}$, see (b). Also,
for every $v^\prime \in \mathcal W$ the closed subset
$\Phi^{-1}(v^\prime)\subset G^m\times G^r$ is isomorphic (as an
affine variety) to the closed subset
$\Phi^{-1}_\Sigma(v^\prime)\subset G^m$. Hence the dimension of the
general fibre of the morphism $\Phi_\Sigma \colon G^m\rightarrow
G\times G^r$ is equal to $\mathfrak{f}$, and therefore
$$\dim \overline{\Imm \Phi_\Sigma} = m \dim G - \mathfrak{f}.$$ The
construction of $\Phi_\Sigma$ shows that $\overline{\Imm
\Phi_\Sigma} $ is isomorphic to $\overline{\Imm \w_\Sigma}$ (the
projection of $G\times G^r$ onto the first component gives this
isomorphism). Hence $\dim \overline{\Imm \w_\Sigma} = m \dim G -
\mathfrak{f}$ for every $\Sigma \in \mathcal U$.

\bigskip

Let $\Sigma^\prime = (\sigma^\prime_1, \dots, \sigma^\prime_r) \in
G^r$ (possibly,  $\sigma^\prime_i \in Z(G)$ for some $i$).
Note that over the points of $G \times \Sigma \subset G \times G^r$
the maps
$\w_{\Sigma^\prime} \colon G^m\rightarrow G$ and
$\Phi_{\Sigma^\prime}\colon G^m \rightarrow G\times G^r$ have the
same fibres. Moreover, these fibres are also fibres of the map
$\Phi\colon G^m\times G^r\rightarrow G\times G^r$ which correspond
to points of the form $(\w_{\Sigma^\prime}(g_1, \dots, g_m),
\sigma_1^\prime, \dots, \sigma_r^\prime)$. Since the dimension of
every fibre of $\Phi$ is at least $\mathfrak{f}$ (see (c)), the
dimension $\mathfrak{f}^\prime$ of the general fibre of
$\w_{\Sigma^\prime}$ is at least $\mathfrak{f}$. Hence
$$\dim \overline{\Imm \w_{\Sigma^\prime}} = m\dim G -\mathfrak{f}^\prime \leq \mathfrak{d} = m \dim G-\mathfrak{f}.$$
\end{proof}

\begin{definition}
Given an $(r+1)$-tuple of words $\Omega_r = (w_1, \dots, w_{r+1})$
of $F_m$ where $w_2, \dots, w_r \ne 1$, we say that
$\Sigma=(\sigma_1,\dots ,\sigma_r)$ is {\it regular} for $\Omega_r$
if $\Sigma\in\mathcal U$ where $\mathcal U$ is an open subset
satisfying the conditions of Theorem \ref{pr1}.
\end{definition}

\begin{example}
Let $m =1$ and $\Omega_2 = \{w_1 = x, w_2 = x^{-1}\}$. Then for
every $\Sigma = \{\sigma\}$ the image of $\w_\Sigma$ is the
conjugacy class of $\sigma$, and therefore for a regular $\Sigma$
the dimension of $\Imm \w_\Sigma $ is equal to the dimension of the
conjugacy class of a regular element $\sigma$ of $G$, that is, 
$\dim\, G - \rank\, G$.
\end{example}


\begin{cor}
\label{th1} Let $\Omega_r =(w_1, \dots, w_{r+1})$ be such that $w_2,
\dots, w_r \ne 1$. Suppose
$
\prod_{i=1}^{r+1} w_i \ne 1$.
Then if $\Sigma = (\sigma_1, \dots, \sigma_r)$ is regular for
$\Omega_r$, the map
$
\w_\Sigma \colon G^m \rightarrow G
$
is dominant.
\end{cor}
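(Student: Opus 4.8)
The plan is to derive the corollary from Theorem \ref{pr1} by specializing the constants to a particularly favourable, indeed \emph{central}, value and then invoking Borel's dominance theorem. The key observation is that the ``moreover'' part of Theorem \ref{pr1} produces a single number $\mathfrak{d}$ which is an upper bound for $\dim\overline{\Imm\w_{\Sigma^\prime}}$ over \emph{all} $\Sigma^\prime\in G^r$ (even with central entries) and is attained for every regular $\Sigma$; hence it suffices to exhibit one $\Sigma^\prime$ for which $\w_{\Sigma^\prime}$ is dominant.

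First I would take $\Sigma^\prime=(1,\dots,1)\in G^r$, i.e. set every $\sigma^\prime_i$ equal to the identity of $G$. This lies in $Z(G)^r$, so it is an admissible choice for the last assertion of Theorem \ref{pr1}, which explicitly permits $\sigma^\prime_i\in Z(G)$. With this choice the expression $w_1\sigma^\prime_1w_2\sigma^\prime_2\cdots w_r\sigma^\prime_rw_{r+1}$ collapses to the genuine word $w:=\prod_{i=1}^{r+1}w_i\in F_m$, and $\w_{\Sigma^\prime}$ becomes the ordinary word map of $w$ on $G^m$.

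Next I would use the hypothesis $\prod_{i=1}^{r+1}w_i\ne 1$: since $w$ is then a nontrivial element of $F_m$, Borel's theorem \cite{Bo1} applies and shows that the word map of $w$ is dominant, i.e. $\dim\overline{\Imm\w_{\Sigma^\prime}}=\dim G$. Comparing with the inequality of Theorem \ref{pr1}, $\dim\overline{\Imm\w_{\Sigma^\prime}}\le\mathfrak{d}$, and with the trivial bound $\mathfrak{d}\le\dim G$ (as $\overline{\Imm\w_\Sigma}\subseteq G$), we obtain $\mathfrak{d}=\dim G$.

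Finally, for any $\Sigma$ regular for $\Omega_r$, Theorem \ref{pr1} gives $\dim\overline{\Imm\w_\Sigma}=\mathfrak{d}=\dim G$; since $G$ is irreducible, a Zariski-closed subset of $G$ of full dimension equals $G$, so $\overline{\Imm\w_\Sigma}=G$ and $\w_\Sigma$ is dominant. I do not anticipate any real difficulty here: the argument is a short deduction once Theorem \ref{pr1} is available. The only point requiring a little care is pinning down where the hypothesis $\prod_{i=1}^{r+1}w_i\ne 1$ enters — it is exactly what makes Borel's theorem applicable to the central specialization $\Sigma^\prime=(1,\dots,1)$ — and noting that this specialization, though its entries are central and hence not formally admissible in a ``word with constants'', is allowed in the ``moreover'' clause of Theorem \ref{pr1}.
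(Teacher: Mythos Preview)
Your argument is correct and follows exactly the paper's approach: specialize to $\Sigma'=(1,\dots,1)$, apply Borel's theorem to the resulting genuine word $\prod_i w_i\ne 1$, and then use the ``moreover'' clause of Theorem~\ref{pr1} to force $\mathfrak{d}=\dim G$. You have merely spelled out in more detail what the paper compresses into two sentences.
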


\begin{proof}
Indeed, for $\Sigma = (1, 1,\dots, 1)$ we have $w_\Sigma = w  =
\prod_{i=1}^{r+1} w_i \ne 1$, and therefore $\w_\Sigma$ is dominant
according to Borel's Theorem. Now the statement immediately follows
from Theorem  \ref{pr1}.
\end{proof}

\begin{remark}
Since the point $(1, 1,\dots, 1)$ is the most peculiar case,
presumably, the condition $w  = \prod_{i=1}^{r+1} w_i \ne 1$ is
sufficient for every $\w_\Sigma$ to be dominant.
\end{remark}



Below we consider one important example where the word with
constants is obtained by substitution of an element of $G$ instead
of one variable and where the condition $\prod_{i=1}^{r+1} w_i \ne1$ of Corollary \ref{th1} may
not hold (such words with constants are used in \cite{G3}).

Let $w (x_1, \dots, x_m, y)\in F_{m+1}$,
$$w = w_1 y^{k_1}w_2y^{k_2}w_3 \cdots w_r y^{k_r}w_{r+1},$$
where $w_1, \dots, w_{r+1} \in F_m$ and $w_2, \dots, w_r \ne 1$.
Then for every $\sigma \in G$ we have the word with constants
$$w_\sigma = w_1 \sigma^{k_1}w_2\sigma^{k_2}w_3 \cdots w_r \sigma^{k_r}w_{r+1}$$
(actually, according to the definition, we have to exclude the cases where $\sigma^{k_i}\in Z(G)$
for some $i$).

\begin{theorem}\label{th2}
Suppose $ \sum_{j=1}^m k_j = 0$.  Then there exists an open subset $\mathcal V\subset G$
such that for any $\sigma \in \mathcal V$ the map
$$\pi \circ \w_\sigma\colon G^m\rightarrow T/W$$
is dominant.
\end{theorem}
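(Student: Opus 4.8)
The plan is to pass from the known dominance of the full word map to dominance of a suitable one‑variable slice. First note that, as the definition of a word with constants forces $\sigma^{k_i}\notin Z(G)$, we may assume every $k_i\neq0$; then $w$ is reduced of positive length, so $w\neq1$ in $F_{m+1}$, and Borel's theorem gives that $\pi\circ\w\colon G^{m+1}\to T/W$ is dominant, where $\w$ is the word map of $w$ and $\w_\sigma=\w\circ\iota_\sigma$ with $\iota_\sigma\colon G^m\to G^{m+1}$, $g\mapsto(g,\sigma)$. I would reduce the theorem to producing one $\sigma_0\in G$ with $\pi\circ\w_{\sigma_0}$ dominant, and then construct such a $\sigma_0$; the construction splits according to whether or not $w\in[F_{m+1},F_{m+1}]$.

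\emph{The reduction.} Let $\mathcal V\subset G$ be the set of those $\sigma$ for which there is a $g\in G^m$ at which the differential of $g'\mapsto\pi(w_\sigma(g'))$ has rank $\rank\,G=\dim(T/W)$. Written in coordinates, this is the non‑vanishing of some $\rank\,G$‑rowed minor of a Jacobian matrix, an open condition on $G^m\times G$; since the projection $G^m\times G\to G$ is open, $\mathcal V$ is open, and for $\sigma\in\mathcal V$ the image of $\pi\circ\w_\sigma$ contains a nonempty open subset of $T/W$, i.e.\ $\pi\circ\w_\sigma$ is dominant. So it suffices to exhibit a single $\sigma_0\in\mathcal V$.

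\emph{The case $w\notin[F_{m+1},F_{m+1}]$.} Since the exponent sum of $y$ in $w$ is $\sum_i k_i=0$, this hypothesis means some $x_j$ has non‑zero total exponent $\bar e_j:=\sum_i e_{ij}$ in $w$, where $e_{ij}$ is the exponent sum of $x_j$ in $w_i$. Fix a Borel subgroup $B\supseteq T$ with unipotent radical $U$, and let $t_B\colon B\to T$ be the projection with kernel $U$ (a homomorphism). Take $\sigma_0\in B$. For $g=(g_1,\dots,g_m)\in B^m$ one has $\sigma_0^{k_i},w_i(g)\in B$, hence $w_{\sigma_0}(g)\in B$, and since $t_B$ is a homomorphism and $T$ is abelian,
$$t_B\bigl(w_{\sigma_0}(g)\bigr)=\Bigl(\prod_j t_B(g_j)^{\bar e_j}\Bigr)\,t_B(\sigma_0)^{\sum_i k_i}=\prod_j t_B(g_j)^{\bar e_j}=:t(g),$$
the second equality using $\sum_i k_i=0$. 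Taking $g_\ell=1$ for $\ell\neq j$ shows $g\mapsto t(g)$ is surjective onto $T$, so $t(g)$ is regular for $g$ in a dense open subset of $B^m$; there $w_{\sigma_0}(g)\in t(g)\,U$ is $G$‑conjugate to $t(g)$ (for a regular $t\in T$ the coset $t\,U$ lies in a single conjugacy class), hence $\pi(w_{\sigma_0}(g))=\pi(t(g))$. As $g\mapsto\pi(t(g))$ is dominant onto $T/W=\pi(T)$, the map $\pi\circ\w_{\sigma_0}$ is dominant (already after restriction to $B^m$) for every $\sigma_0\in B$, and we are done in this case.

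\emph{The case $w\in[F_{m+1},F_{m+1}]$: the main obstacle.} Now all $\bar e_j=0$: the Borel‑restriction collapses, since $w_\sigma(B^m)\subseteq U$, and $\sum_i k_i=0$ also forces $w_\sigma(1,\dots,1)=\sigma^{\sum_i k_i}=1$, so linearising $\pi\circ\w_\sigma$ at $(1,\dots,1)$ yields nothing ($\pi$ is itself degenerate near the identity). This is, I expect, where the real difficulty lies. The approach I would take: pick $\sigma_0$ regular semisimple and a point $g_0$ with $\gamma:=w_{\sigma_0}(g_0)$ regular semisimple, and show that the image of the differential of $\w_{\sigma_0}$ at $g_0$ is transverse to the conjugacy class of $\gamma$ — equivalently, that its image in $\operatorname{Lie}(T)$ under the projection along the root spaces of $\gamma$ is everything — so that $\pi\circ\w_{\sigma_0}$ has full‑rank differential at $g_0$ and thus $\sigma_0\in\mathcal V$. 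The rewriting $w_\sigma(g)=\prod_{i=1}^{r+1}\sigma^{a_i}\,w_i(g)\,\sigma^{-a_i}$ with $a_1=a_{r+1}=0$ and $a_i=k_1+\dots+k_{i-1}$ — legitimate precisely because $\sum_i k_i=0$ — isolates the dependence on $\sigma$, and the flexibility in $g$ needed for the transversality should come from $w_2\neq1$, i.e.\ from the dominance of $g\mapsto w_2(g)$. Already in the basic case $w=[x_j,y]$ this reduces to the known fact that, for $\sigma_0$ regular, $g\mapsto[g,\sigma_0]$ meets a dense set of regular semisimple conjugacy classes; carrying out the analogous transversality for an arbitrary $w\in[F_{m+1},F_{m+1}]$ is the crux of the theorem.
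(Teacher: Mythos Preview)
Your proposal leaves a genuine gap: the case $w\in[F_{m+1},F_{m+1}]$ is not proved. You outline a transversality approach and then write that ``carrying out the analogous transversality for an arbitrary $w\in[F_{m+1},F_{m+1}]$ is the crux of the theorem''---but that crux is precisely what a proof must supply. Nothing in the sketch (the rewriting $w_\sigma(g)=\prod_i\sigma^{a_i}w_i(g)\sigma^{-a_i}$, the dominance of $g\mapsto w_2(g)$, the special case $w=[x_j,y]$) actually produces a mechanism forcing the differential of $\pi\circ\w_{\sigma_0}$ to be surjective at some point, and no reduction from a general commutator word to $[x_j,y]$ is offered. A secondary issue: your reduction defines $\mathcal V$ by a rank condition on the Jacobian, so passing from ``$\pi\circ\w_{\sigma_0}$ is dominant'' (which is what your Borel-subgroup argument in the non-commutator case establishes) back to ``$\sigma_0\in\mathcal V$'' needs generic smoothness, hence characteristic zero, whereas the paper's statement is over an arbitrary algebraically closed field.

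The paper's proof avoids both the case split and any differential analysis by induction on $\rank G$. Fix a chain $G_1<\cdots<G_\ell=G$ of simple $T$-stable subgroups with $\rank G_i=i$ and maximal tori $T_i\leq T$, and write $T=T_iH_i$ with $H_i$ centralising $G_i$. For $\sigma=\sigma_ih_i\in T$ and $g\in G_i^m$ one has
\[
w_\sigma(g)=w(g,\sigma_ih_i)=w(g,\sigma_i)\,h_i^{\sum_jk_j}=w(g,\sigma_i)\in G_i,
\]
the last equality being exactly where $\sum_jk_j=0$ is used; thus $\w_\sigma$ restricts to a map $\w_\sigma^i\colon G_i^m\to G_i$. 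Borel's theorem for $w$ on $G_i$ (combined with a fibre-dimension argument over the parameter $\sigma\in T$) shows that for $\sigma$ in a nonempty open subset of $T$ the closure $\overline{\pi_i(\Imm\w_\sigma^i)}$ is not contained in the codimension-one irreducible image of $T_{i-1}/W_{i-1}$ inside $T_i/W_i$; since by induction it does contain that image and is itself irreducible, it must equal $T_i/W_i$. The induction starts trivially at rank zero, and the argument treats all words $w$ uniformly, with no split according to the abelianisation.
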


\begin{proof}
Choose a sequence $G_1 < G_2 < \cdots <  G_\ell = G$  of simple
algebraic subgroups of $G$ and a sequence $T_1\leq \cdots \leq
T_\ell = T$ of their maximal tori so that $\rank \,G_i = i$ and all
the $G_i$ are $T$-invariant.

This can be done as follows.
Let $\{\alpha_1, \dots, \alpha_\ell\}$ be an irreducible root system
corresponding to $G$. We may assume that the simple roots $\alpha_i$
are numbered so that for every $i \leq \ell$ the set $\{\alpha_1, \dots, \alpha_i\}$
is an irreducible root system. Let $G_i$ be the corresponding subgroup of $G$
(generated by the root subgroups of the root system $\{\alpha_1, \dots, \alpha_i\}$).
Then we have $G_1< G_2 <\cdots <G_\ell = G$.
Further, let
$T_i = \{h_{\alpha_1}(t_1)h_{\alpha_2}(t_2) \cdots h_{\alpha_i}(t_i)\mid \,\,t_1, \dots, t_i\in K^*\}$
be the corresponding maximal torus of $G_i$ (here $\{h_{\alpha_i}(t)\,\mid\, t\in K^*\}$ is the corresponding one-dimensional torus in the simple algebraic group generated by the root subgroups $X_{\pm \alpha_i}$)), and let $T = T_\ell$.
Then we have $T_1< T_2 < \cdots <T_\ell = T$. Obviously, $T$ normalizes every
root subgroup $X_\beta$, $\beta \in \langle \alpha_1, \dots, \alpha_\ell\rangle$,
and therefore it normalizes every $G_i$. Then in the reductive group $G_iT$
we have $T = T_i H_i$ where $H_i$ is the subtorus of $T$ which centralizes $G_i$.
Let $\pi_i\colon G_i \rightarrow
T_i/W_i$ denote the quotient morphisms.



The embedding $\iota\colon  T_{i-1}\rightarrow T_i$ induces a morphism
of varieties
$$\iota^* \colon  T_{i-1}/W_{i-1} \rightarrow T_i/W_i.$$
Obviously, the morphism $\iota^*$ is quasi-finite (every non-empty fibre is finite). 
Then
$$\dim \overline{ \iota^*(T_{i-1}/W_{i-1})} = \dim T_{i-1}= i-1.$$
The set $\overline{\iota^*(T_{i-1}/W_{i-1})}$ is an irreducible codimension one
subset of $T_i/W_i$.

\bigskip

Now suppose that $\sigma \in T$. Recall that for every $i$ we have $T = T_i H_i$
where $H_i$ is the centralizer of $G_i$ in $T$. Hence $\sigma =
\sigma_i h_i$ for some $\sigma_i \in T_i, h_i \in H_i$. Then for
every $m$-tuple $(g_1, \dots, g_m)\in G_i^m$ we have
\begin{equation}
\label{equa1779}
 w_\sigma (g_1, \dots, g_m) = w(g_1, \dots, g_m, \sigma_i)h_i^{\sum_j k_j} = w(g_1, \dots, g_m, \sigma_i)\in G_i
\end{equation}
(recall that $\sum_{j=1}^r k_j = 0$). Hence the restriction of
$\w_\sigma$ to $G_i^m$ gives the map
$$ \w_\sigma^i\colon G_i^m  \rightarrow G_i.$$

\begin{lemma}
\label{lem1} For every $1<i\le\ell$ there is an element $\sigma\in
T$ such that
$$\overline{\pi_i (\Imm \w_\sigma^i)} \nsubseteq \overline{ \iota^*(T_{i-1}/W_{i-1})}.$$
\end{lemma}

\begin{proof}
Assume to the contrary that $\overline{\pi_i (\Imm \w_\sigma^i)}
\subseteq \overline{\iota^*(T_{i-1}/W_{i-1})}$ for every $\sigma \in T$. Then for
every $m$-tuple $(g_1, \dots, g_m)\in G^m_i$ and every $\sigma_i \in
T_i$ we have
$$\pi_i \circ \w (g_1, \dots, g_m, \sigma_i) \subset \overline{ \iota^*(T_{i-1}/W_{i-1})}.$$
Note that any $(m+1)$-tuple of semisimple elements $(\gamma_1,
\dots, \gamma_m, \gamma)$ is conjugate by an element of $G_i$ to an
$(m+1)$-tuple of the form $(g_1, \dots, g_m, \sigma_i)$ where
$\sigma_i \in T$. Since the set of $(m+1)$-tuples $(\gamma_1, \dots,
\gamma_m, \gamma)$ of semisimple elements is dense in $G^{m+1}_i$
and the map $\pi_i \circ \w$ is invariant under conjugation by
elements of $G_i$, we have $\pi_i \circ \w(G^{m+1}_i)\subset
\overline{\iota^*(T_{i-1}/W_{i-1})}$, and therefore
$$\dim \overline{\pi_i \circ \w(G^{m+1}_i}) \leq i-1.$$
This implies that $\overline{\pi_i \circ \w(G^{m+1}_i}) \ne \overline{\pi_i(G_i)}$, 
which is a contradiction with the Borel Theorem \cite{Bo1}.
\end{proof}

\begin{lemma}
\label{lem2} For every $1<i\le\ell$ there is a non-empty open subset
$S_i\subset T$ such that
$$\overline{\pi_i (\Imm \w_\sigma^i)} \nsubseteq
\overline{\iota^*(T_{i-1}/W_{i-1})}
$$
for every  $\sigma \in S_i$.
\end{lemma}

\begin{proof}
Let $\omega^i\colon  G_i^m  \times T \rightarrow G_i$ be given by
the formula $$\omega_i (g_1, \dots, g_m, \sigma) = \w_\sigma ^i(g_1, \dots, g_m).$$
Consider the composite map
$$\pi_i\circ \omega^i\colon G_i ^m\times  T \stackrel{\omega^i}\rightarrow G_i\stackrel{\pi_i}{\rightarrow} T_i/W_i.$$
Let $X = (\pi_i\circ \omega^i)^{-1} (\overline{\iota^*(T_{i-1}/W_{i-1}))}$. Then
$X$ is a proper closed subset in $G_i^m\times T$ by Lemma
\ref{lem1}. Let $X = \cup_q X_q$ be the decomposition of $X$ into
the union of irreducible closed subsets. Further, let $i_T\colon
X\rightarrow T$ be the map induced by the projection $G_i^m\times T
\rightarrow T$. Suppose $\overline{i_T(X_q)}\ne T$ for some $q$. Put
$S_{iq} = T\setminus \overline{i_T(X_q)}$. Now, if $\sigma \in
S_{iq}$ then $i_T^{-1}(\sigma)\cap X_q = \emptyset$. Suppose
$\overline{i_T(X_q)}=  T$ for some $q$.  Since $\dim X_q < \dim\,
G_i^m\times T$, there is a non-empty open subset $S_{iq}\subset T$
such that $\dim i_T^{-1}(\sigma)\cap X_q < \dim G_i^m$ for every $\sigma
\in S_{iq}$. Thus, if $\sigma \in S_i = \cap_q S_{iq}$, the set
$G_i^m\times \{\sigma\}$ is not contained in $X$ and therefore
$\overline{\pi_i (\Imm \w_\sigma^i)} \nsubseteq
\overline{\iota^*(T_{i-1}/W_{i-1}})$.
\end{proof}

\begin{lemma}
\label{lem3} There is a non-empty open subset $S\subset T$ such that
for every $1<i\le\ell$ and every $\sigma\in S$ we have
$$\overline{\pi_i (\Imm\w_\sigma^i)} \nsubseteq \overline{ \iota^*(T_{i-1}/W_{i-1}}).$$
\end{lemma}

\begin{proof}
Take $S = \cap_i S_i$ where $S_i$ is an open set from Lemma
\ref{lem2}.
\end{proof}

We can now prove the theorem. Choose $S$ as in Lemma \ref{lem3}. Let
$\sigma \in S$. Suppose
\begin{equation}
\overline{\pi_{i-1} (\Imm \w^{i-1}_\sigma)} =  T_{i-1}/W_{i-1}.
\label{eq1.8}
\end{equation}
Note that for  $i=1$ we have $T_0 = \{1\}$, $W_0=\{1\}$ and
therefore for $i=1$ assumption \eqref{eq1.8} holds. Since for every $i$ the map  $\w^i_\sigma$ is the restriction of $\w_\sigma$ to $G_i^m$ (see (\ref{equa1779})), we have
$$\pi_i(\Imm \w^{i-1}_\sigma) \subset \pi_i (\Imm\w^i_\sigma).$$
Since $\pi_i(\Imm \w^{i-1}_\sigma)\subset T_i/W_i$ is the image of
$\pi_{i-1}(\Imm \w^{i-1}_\sigma)\subset T_{i-1}/W_{i-1}$ with
respect to the map $\iota^* \colon  T_{i-1}/W_{i-1}\rightarrow T_i/W_i$,
we obtain from \eqref{eq1.8} and the assumption on $\sigma$ that
$$\overline{\iota^*(T_{i-1}/W_{i-1})} \subsetneqq \overline{\pi_i (\Imm \w^i_\sigma)}.$$
Since $\overline{\pi_i (\Imm \w_\sigma^i)}$ and $\overline{\iota^*(T_{i-1}/
W_{i-1})}$ are closed irreducible subsets of $T_i/W_i$ and $\dim \,
\overline{\iota^*(T_{i-1}/ W_{i-1})} $
$= i-1$, we have
$$\overline{\pi_i (\Imm \w_\sigma^i)} =  T_i/ W_i.$$
The induction step now finishes the proof.
\end{proof}

\begin{remark}
Presumably, the assumption $\sum_{j=1}^r k_j = 0$ can be replaced
with a weaker condition: the word $w (x_1, \dots, x_m, y)\in
F_{m+1}$ is not of the form $\omega y^l \omega^{-1}$ for some
$\omega  = \omega(x_1, \dots, x_m, y) \in F_{m+1}$. However, the
example $w(x, \sigma) = x\sigma x^{-1}$ shows that we cannot avoid
some restrictions on the word $w$.
\end{remark}


\begin{remark}
It would be interesting to extend our considerations to words with
constants and automorphisms (anti-automorphisms) of $G$ where we
have variables $x_i$ and variables $x_i^{\phi_i}$ marked by
automorphisms. Such a word also gives rise to a map $G^m\to G$. Note
that given a word with constants $w_\Sigma$ and a collection
$\Phi=\{\phi_i\}$ $(i=1,\dots ,m)$ of automorphisms of $G$, there
are (at least) two natural ways to produce such a map: first replace
each appearance of $x_i^{a_i}$ in $w_\Sigma$ with
$(x_i^{\phi_i})^{a_i}$, then either
\begin{enumerate}
\item replace each such expression with $\phi_i(g_i)^{a_i}$, or
\item do this only in the cases where the exponent $a_i$ is
positive,
\end{enumerate}
and compute the resulting value in $G$. Although the second option
might seem a little artificial, it would include (at least) two
important cases: 1) twisted conjugacy classes, see, e.g., \cite{Sp},
\cite{FT} and the references therein for a survey of various aspects
of this theory, and 2) twisted commutators, see, e.g., \cite{Se},
\cite{NS1}, \cite{NS2}, \cite{Le}.
\end{remark}

\section{Word maps and central functions} 

{\it General definitions.} Let $G = \SL_n(K)$. 
Then we extend a word map with constants $\w_\Sigma\colon
G^m\rightarrow G$ to a map
$$\we_\Sigma\colon \Ma^m_n(K)  \rightarrow \Ma_n(K)$$
in the following way. Let
\begin{equation}
w_\Sigma = w_1\sigma_1w_2\sigma_2\cdots w_r\sigma_rw_{r+1} =
x_{{i_1}}^{a_1}x_{i_2}^{a_2}\cdots x_{i_{s_1}}^{a_{s_1}}\sigma_1
x_{i_{s_1+1}}^{a_{s_1+1}}\cdots x_{i_{s_2}}^{a_{s_2}}\sigma_2
x_{i_{s_2+1}}^{a_{s_2+1}} \cdots \label{eq2.1}
\end{equation}
be a word map with constants where $\Sigma =\{\sigma_1, \dots,
\sigma_r\}\subset \SL_n(K)$.
For any $m$-tuple of matrices $(\mu_1, \dots, \mu_m)\in \Ma^m_n(K)$
we define
$$\we_\Sigma(\mu_1,\dots, \mu_m)\in \Ma_n(K)$$ by putting in formula
\eqref{eq2.1} instead of each expression $x_{i_k}^{a_k}$ the matrix
$\bar{\mu}_{i_k}^{\mid a_k\mid}$ where
$$\bar{\mu}_{i_k} = \begin{cases} \mu_{i_k}\,\,\,\text{if}\,\,\,a_k >0\\ \mu^*_{i_k} \,\,\,\text{if}\,\,\,a_k < 0 \end{cases}.$$
In other words, we substitute in formula \eqref{eq2.1} matrices
instead of variables replacing each negative exponent with its
module and in each such case replacing the given matrix with the
adjugate matrix.

\begin{remark}
The word map with constants $\w_\Sigma\colon \SL_n^m(K)\rightarrow
\SL_n(K)$ admits a natural extension to the group $\GL_n(K)$. Below
we also denote by $\w_\Sigma$ the corresponding map
$\GL_n^m(K)\rightarrow \GL_n(K)$.
\end{remark}

Note that the map $\we_\Sigma$ can be represented by an $n\times
n$-matrix whose entries are polynomial functions in the entries of
the matrices $\mu_i$. More precisely:
\begin{prop}
\label{pr1.1}
\begin{itemize}
\item[(i)]
Let $A= K[\{y_{pq}^r\}^m_{r=1}]$ be the ring of polynomial functions
on $\Ma_n^m$ where $y_{pq}^r$ corresponds to the $(p,q)^{th}$ entry
of the $r^{th}$ component of $\Ma^m_n$ (here $1\leq p,q \leq n,
\,\,1\leq r\leq m$). Further, let
$$a = \sum_{a_k > 0}  a_k,\,\,\,\,b = \sum_{a_k < 0}  \mid a_k\mid .$$
Then
$$
\we_\Sigma = \begin{pmatrix} w_{11}&w_{12}&\cdots &w_{1n}\cr
w_{21}&w_{22}&\cdots &w_{2n}\cr \cdots\cr w_{n1}&w_{2n}&\cdots
&w_{nn}\cr\end{pmatrix}\in \Ma_n(A)
$$
where each $w_{ij}$ is a homogeneous polynomial in $\{y_{pq}^r\}$ of
degree $a + (n-1) b$.
\item[(ii)]
Let $\Delta_r$ be the homogeneous polynomial of degree $n$ in the
variables $y_{pq}^r$ given by the determinant of the $r^{\text{th}}$
component of $\Ma_n$, and let
$$\Delta =
\prod_{r=1}^m\Delta^{b_r}_r\,\,\,\text{where}\,\,\,b_r = \sum_{a_k
<0, i_k = r}\mid a_k\mid.
$$
Then, restricting to $\GL_n$ and $\SL_n$, we obtain, respectively
$$
\Res_{\GL_n}\we_\Sigma = \Delta \w_\Sigma\,\,\,\,\text{and}\,\,\,\Res_{\SL_n}\we_\Sigma = \w_\Sigma.
$$
\end{itemize}
\end{prop}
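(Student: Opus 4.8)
\textbf{Proof strategy for Proposition \ref{pr1.1}.}
The plan is to derive both parts from two elementary facts: (1) if two $n\times n$ matrices have entries that are homogeneous polynomials of degrees $d$ and $e$ in a fixed set of variables, then every entry of their product is homogeneous of degree $d+e$ (each entry of the product being a sum of products of such entries); and (2) Cramer's identity $MM^{*}=M^{*}M=\det(M)I_n$, valid on all of $\Ma_n$, which on $\GL_n$ becomes $M^{*}=\det(M)M^{-1}$.

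For (i) I would feed fact (1) through the factorization \eqref{eq2.1}. After the substitution defining $\we_\Sigma$, the $r$-th generic matrix $\mu_r=(y^r_{pq})$ has entries homogeneous of degree $1$; by the remark in the Notation section the adjugate $\mu_r^{*}$ has entries homogeneous of degree $n-1$; and each constant $\sigma_j$ has scalar entries, i.e.\ of degree $0$. Hence the block $\bar\mu_{i_k}^{\mid a_k\mid}$ substituted for $x_{i_k}^{a_k}$ contributes $a_k$ to the total degree when $a_k>0$ and $(n-1)\mid a_k\mid$ when $a_k<0$, the $\sigma_j$ contributing nothing; adding these over all factors of \eqref{eq2.1} gives $\sum_{a_k>0}a_k+(n-1)\sum_{a_k<0}\mid a_k\mid=a+(n-1)b$, which is (i). One uses here the convention that the zero polynomial is homogeneous of every degree, so the claim holds uniformly even for entries $w_{ij}$ that vanish identically.

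For (ii), on $\GL_n$ fact (2) gives, for each factor with $a_k<0$,
$$(\mu^{*}_{i_k})^{\mid a_k\mid}=\det(\mu_{i_k})^{\mid a_k\mid}\,\mu_{i_k}^{-\mid a_k\mid}=\det(\mu_{i_k})^{\mid a_k\mid}\,\mu_{i_k}^{a_k}.$$
Substituting this into \eqref{eq2.1} and using that the scalars $\det(\mu_{i_k})^{\mid a_k\mid}$ are central, I would pull them all out in front; collecting those with $i_k=r$ produces $\det(\mu_r)^{b_r}$ with $b_r=\sum_{a_k<0,\,i_k=r}\mid a_k\mid$, hence the overall scalar $\prod_r\det(\mu_r)^{b_r}=\Delta$. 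What remains is precisely $w_\Sigma$ evaluated with genuine inverses, that is $\w_\Sigma(\mu_1,\dots,\mu_m)$, so $\Res_{\GL_n}\we_\Sigma=\Delta\,\w_\Sigma$. Restricting further to $\SL_n$, where $\det(\mu_r)=1$ for all $r$ and hence $\Delta\equiv 1$, yields $\Res_{\SL_n}\we_\Sigma=\w_\Sigma$.

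I do not expect any genuine obstacle: the argument is pure bookkeeping. The only points to be careful about are the zero-polynomial convention in (i); the fact that $M^{*}=\det(M)M^{-1}$ holds only on $\GL_n$ (on $\Ma_n$ at large one has just $MM^{*}=\det(M)I_n$), so that the equality in (ii) is literally an identity of $\Ma_n$-valued regular functions on $\GL_n$ and requires no density argument; and the correct accumulation of the exponents $b_r$ when a single variable occurs in several negative-exponent factors.
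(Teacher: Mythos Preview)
Your argument is correct and is precisely the ``straightforward from the definition'' that the paper gives as its entire proof: you have just spelled out the bookkeeping on degrees for (i) and the Cramer identity $\mu^{*}=\det(\mu)\mu^{-1}$ on $\GL_n$ for (ii). There is nothing to add.
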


\begin{proof}
Straightforward from the definition.
\end{proof}

\begin{example}
\label{ex0} Let $m = 1, n =2$,
$$
\Sigma = \left\{\sigma_1 =\sigma = \begin{pmatrix}s&0\cr 0&s^{-1}\end{pmatrix}, \sigma_2 = \sigma^{-1}=\begin{pmatrix}s^{-1}&0\cr 0&s\end{pmatrix}\,\,\,\,\mid\,\,\,s \ne 1\right\},
$$
$w_\Sigma = \sigma x\sigma^{-1}x^{-1}$. Then $
a = 1, b = 1$, and $\we_\Sigma$ is a function on $\Ma_2$ which can be
expressed by the formula
$$
\we_\Sigma = \begin{pmatrix}s&0\cr
0&s^{-1}\end{pmatrix}\begin{pmatrix}y_{11}&y_{12}\cr
y_{21}&y_{22}\end{pmatrix}\begin{pmatrix}s^{-1}&0\cr
0&s\end{pmatrix}\begin{pmatrix}y_{22}&-y_{12}\cr
-y_{21}&y_{11}\end{pmatrix}= $$$$= \begin{pmatrix}y_{11}y_{22} -
s^2y_{12}y_{21}& y_{11}y_{12}( s^2-1)\cr
y_{21}y_{22}(1-s^{-2})&y_{11}y_{22} - s^{-2}y_{12}y_{21} \cr
\end{pmatrix}.
$$
\end{example}


{\it Central functions on $\Imm_{\we_\Sigma}.$} Let $X \in \Ma_n(K)$,
and let
$$
\chi(X) = \lambda^n +\chi_1(X)\lambda^{n-1} +\cdots +\chi_n(X)
$$
be the characteristic  polynomial of $X$. Then the map $\chi_i
\colon \Ma_n(K) \rightarrow K$, $X\mapsto \chi_i(X)$, is given by a
homogeneous polynomial of degree $i$ in the entries of $X$.

\bigskip

The following fact should be compared with \cite[Lemma~2.1]{BZ}.

\begin{theorem}
\label{th11} Let $$\w_\Sigma\colon \SL^m_n(K)\rightarrow \SL_n(K).$$
Then $\chi_i\circ \w_\Sigma\colon \SL_n^m(K)\rightarrow K$ is either
a constant function, or takes every value.
\end{theorem}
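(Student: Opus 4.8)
The plan is to reduce the statement to a connectedness property of the source variety together with the observation that each $\chi_i\circ\w_\Sigma$ extends to a polynomial function on the matrix algebra. First I would recall from Proposition~\ref{pr1.1} that $\we_\Sigma\colon\Ma_n(K)^m\to\Ma_n(K)$ has polynomial entries, so $\chi_i\circ\we_\Sigma\colon\Ma_n(K)^m\to K$ is a polynomial function; denote it $P_i$. By the same proposition, restricting to $\SL_n$ gives $\Res_{\SL_n}\we_\Sigma=\w_\Sigma$, hence $\chi_i\circ\w_\Sigma$ is exactly $\Res_{\SL_n^m}P_i$. The key point is that $\SL_n(K)^m$ is an irreducible (hence connected) affine variety, and the image of an irreducible variety under a morphism to $\mathbb{A}^1$ is an irreducible constructible subset of $K$: it is either a single point or a cofinite set. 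So it suffices to rule out the intermediate case, i.e. to show the image cannot be a proper cofinite subset of $K$.

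The heart of the matter is therefore the following dichotomy: a morphism $f\colon V\to\mathbb{A}^1$ from an irreducible variety $V$ over an algebraically closed field $K$ either is constant or is dominant, and in the latter case its image contains $K$ minus finitely many points; but in fact I claim here the image is all of $K$. To get full surjectivity (not just "cofinite"), I would use that $P_i=\chi_i\circ\we_\Sigma$ is a \emph{homogeneous} polynomial on $\Ma_n(K)^m$ — by Proposition~\ref{pr1.1}(i) each $w_{jk}$ is homogeneous of degree $a+(n-1)b$, and $\chi_i$ is homogeneous of degree $i$, so $P_i$ is homogeneous of degree $i\bigl(a+(n-1)b\bigr)=:D$. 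Now pick any $m$-tuple $(\mu_1,\dots,\mu_m)\in\Ma_n(K)^m$ with $P_i(\mu_1,\dots,\mu_m)\neq 0$ (such a tuple exists unless $P_i\equiv 0$, i.e. unless $\chi_i\circ\w_\Sigma$ is identically zero, which is the constant case). Rescale: replace $\mu_j$ by $t^{1/n}\mu_j$ is not quite legal inside $\SL_n$, so instead I would argue as follows. Consider the one-parameter family $g_j(t):=\mu_j$ is fixed while we exploit homogeneity differently.

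A cleaner route to surjectivity: given a target value $c\in K$, I want $(g_1,\dots,g_m)\in\SL_n(K)^m$ with $P_i(g_1,\dots,g_m)=c$. Fix $(\mu_1,\dots,\mu_m)$ with $P_i(\mu_1,\dots,\mu_m)=c_0\neq 0$, and scale each $\mu_j$ by a scalar $t_j\in K^\ast$; since $P_i$ is homogeneous of total degree $D$ and, tracking the exponents $a$ and $b$ of Proposition~\ref{pr1.1}(i), scaling $\mu_j$ by $t_j$ multiplies $P_i$ by $\prod_j t_j^{e_j}$ for suitable nonnegative integers $e_j$ not all zero. Choosing the $t_j$ appropriately we can arrange $\prod_j t_j^{e_j}=c/c_0$ for any $c\neq 0$, so every nonzero value is attained; and the value $0$ is attained on the (nonempty, since $P_i\not\equiv 0$ but $n\geq 1$) zero locus of $P_i$ inside $\SL_n(K)^m$ — nonempty because $P_i$ vanishes on a proper subvariety of the irreducible variety $\SL_n^m$, which always has $K$-points. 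Hence $\chi_i\circ\w_\Sigma$ takes every value in $K$. I expect the main obstacle to be the bookkeeping in this last paragraph: one must make sure the scaling argument genuinely stays inside $\SL_n$ (it does not — $t\mu_j\notin\SL_n$), so the correct fix is either to work with $\w_\Sigma$ on $\GL_n$ and use $\Res_{\GL_n}\we_\Sigma=\Delta\w_\Sigma$, transferring the statement to $\GL_n$ where rescaling is harmless and then noting $\SL_n^m$ is dense in the relevant fibres, or — following the suggested comparison with \cite[Lemma~2.1]{BZ} — to run the argument purely on $\Ma_n^m$ for $P_i$, establish that $P_i$ is either identically zero or surjective onto $K$ by homogeneity, and then separately check that the restriction to the dense open subset $\SL_n^m\subset\Ma_n^m$ (or $\GL_n^m$) still attains the same set of values, using irreducibility so that the value set of the restriction equals the value set on the whole space up to the constant case. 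The delicate bit is precisely reconciling "all values on $\Ma_n^m$" with "all values on $\SL_n^m$"; handling it via the factorization $\Res_{\GL_n}\we_\Sigma=\Delta\,\w_\Sigma$ of Proposition~\ref{pr1.1}(ii), where $\Delta$ is a unit on $\GL_n$, is the technically safest path.
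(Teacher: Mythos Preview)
Your plan has the right ingredients---the homogeneity of $P_i:=\chi_i\circ\we_\Sigma$ on $\Ma_n^m$ and the relation $\Res_{\SL_n}\we_\Sigma=\w_\Sigma$ from Proposition~\ref{pr1.1}---and you correctly isolate the one genuine difficulty: passing from values on $\Ma_n^m$ or $\GL_n^m$ down to $\SL_n^m$. But neither of your proposed fixes closes that gap. First, $\SL_n^m$ is \emph{not} a dense open subset of $\Ma_n^m$ (nor of $\GL_n^m$): it is closed of codimension~$m$, so the density claim is simply false and cannot be used to transfer the value set. Second, rescaling $g_r\mapsto tg_r$ inside $\GL_n^m$ multiplies $\w_\Sigma(g_1,\dots,g_m)$ by $t$ raised to the \emph{net} exponent of $x_r$ in $w_\Sigma$; for any balanced word (e.g.\ $w=[x,y]$, or any $w\in[F_m,F_m]$ with $\Sigma=\emptyset$) all net exponents vanish and the scaling does nothing. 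So as written the argument does not rule out a cofinite, proper image; the ``irreducible $\Rightarrow$ point or cofinite'' observation is correct but not enough.

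The paper fills exactly this gap with one specific trick. Let $d_r$ be the homogeneity degree of $F:=P_i$ in the $r$-th block of matrix variables. For each target $\alpha\in K$, consider the hypersurface $X_\alpha\colon F^n-\alpha\prod_r\Delta_r^{d_r}=0$ in $\Ma_n^m$; taking the $n$-th power is the whole point, since now both terms are homogeneous of the same degree $n d_r$ in each block. If $X_\alpha\subset\{\prod_r\Delta_r=0\}$, then irreducibility of the $\Delta_r$ forces $F^n-\alpha\prod_r\Delta_r^{d_r}$ to be a scalar times a monomial in the $\Delta_r$, whence $F^n\equiv\mathrm{const}$ on $\SL_n^m$: the constant case. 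Otherwise $X_\alpha$ has a $\GL_n^m$-point $g=(g_1,\dots,g_m)$; writing $g_r=\sqrt[n]{\det g_r}\cdot g_r'$ with $g_r'\in\SL_n$, the matched degrees make the scalar factors cancel and give $F^n(g_1',\dots,g_m')=\alpha$. Thus $F^n$, and hence $F$, is surjective on $\SL_n^m$. This hypersurface-plus-$n$th-power manoeuvre is the missing idea in your sketch; your final sentence points in its direction via $\Res_{\GL_n}\we_\Sigma=\Delta\,\w_\Sigma$, but without the $n$th power the degrees do not match and the rescaling from $\GL_n$ to $\SL_n$ cannot be carried out.
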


\begin{proof} We start with the following lemma.

\begin{lemma}
\label{pr2} Let $F$ be a polynomial function on $\Ma^m_n(K)$ which is
homogeneous on each component of $\Ma_n^m(K)$, that is, there exist
natural $d_i$, $i=1,\dots ,m$, such that
$$
F( g_1, \dots, g_{i-1}, cg_i, g_{i+1}, \dots,  g_m) = c^{d_i}F(
g_1, \dots, g_{i-1}, g_i, g_{i+1}, \dots,  g_m)
$$
for every $(g_1, \dots,  g_m)\in \Ma_n^m(K)$ and every $c \in K$.
Then the restriction of $F$ to $\SL^m_n(K)$ is either a constant
function, or takes every value.
\end{lemma}

\begin{proof}
Suppose that the restriction of $F$ to $\SL^m_n(K)$ is not a
constant function. Let $\alpha \in K$, and denote by $X_\alpha$ the
hypersurface in the affine space $\Ma^m_n(K)$ defined by the equation
$$F^n- \alpha\prod_{r=1}^m \Delta_r^{d_r} = 0.$$ (Here $A = K[\{y_{pq}^r\}_{r=1}^m]$, $\Delta_r$,
$\Delta$ are defined as in Proposition \ref{pr1.1}.) Denote by $X_0$
the hypersurface of $\Ma_n^m(K)$ given by the equation $\prod_{r=1}^m
\Delta_r = 0$. If $X_\alpha\subset X_0$, then
$$F^n- \alpha\prod_{r=1}^m \Delta_r^{d_r}
=\beta\Delta_{i_1}^{e_{i_1}}\Delta_{i_2}^{e_{i_2}}\cdots
\Delta_{i_s}^{e_{i_s}}$$ for some $1 \leq i_1\leq i_2\leq \cdots\leq
i_s \leq m$, $\beta \in K^*$ and $e_{i_j} \in \N$ (note that each
$\Delta_r$ is an irreducible polynomial in a polynomial ring $A$),
and therefore $F^n(g_1, \dots, g_m) = \alpha + \beta$ for every
$(g_1, \dots, g_m) \in \SL^m_n(K)$, which contradicts our
assumption. Hence, there is $(g_1, \dots, g_m) \in \GL^m_n(K)$ such
that $$F^n(g_1, \dots, g_m) = \alpha \prod_{r=1}^m
\Delta_r^{d_r}(g_r).$$  On the other hand, $g_r =
\sqrt[n]{\Delta_r(g_r)}g^\prime_r$ for some $g^\prime_r \in
\SL_n(K)$. Now, using the condition on $F$ we obtain
$$F^n(g_1,
\dots, g_m) = F^n(g^\prime_1, \dots, g^\prime_m)\prod_{r=1}^m
\Delta_r^{d_r}(g_r) .$$ Hence $F^n(g^\prime_1, \dots, g^\prime_m)
= \alpha$. Thus, the function $F^n$ takes every value on
$\SL^m_n(K)$. The same is of course true for the function $F$.
\end{proof}

Let now define $F = \chi_i\circ \we_\Sigma \colon \Ma_n^m(K)
\rightarrow K$. Then $F$ satisfies the assumptions of Lemma
\ref{pr2} (see Proposition \ref{pr1.1}). Hence $\Res_{\SL^m_n} F$ is
either a constant function, or takes every value. We have
$$\Res_{\SL^m_n} F =
\Res_{\SL^m_n}\chi_i\circ \we_\Sigma = \chi_i \circ
\Res_{\SL^m_n}\we_\Sigma  = \chi_i \circ \w_\Sigma,
$$
which proves the theorem.
\end{proof}

\begin{cor}
\label{cor1} If $$\chi_1\circ\w_\Sigma \colon \SL^m_2(K)\rightarrow
\SL_2(K)$$ is not a constant function, then every non-central
semisimple conjugacy class of $\SL_2(K)$ intersects $\Imm
\w_\Sigma$.
\end{cor}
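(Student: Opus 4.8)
The plan is to combine Theorem \ref{th11} (in the case $n=2$, $i=1$) with the elementary classification of conjugacy classes in $\SL_2(K)$ by trace.

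First I would record that for a $2\times2$ matrix $X$ the characteristic polynomial is $\lambda^2+\chi_1(X)\lambda+\chi_2(X)=\lambda^2-\tr(X)\lambda+\det(X)$, so $\chi_1=-\tr$ and $\chi_2=\det$. On $\SL_2(K)$ the function $\chi_2\circ\w_\Sigma$ is constantly $1$, while the hypothesis is precisely that $\chi_1\circ\w_\Sigma=-\,\tr\circ\w_\Sigma\colon\SL_2^m(K)\to K$ is not constant. By Theorem \ref{th11} a non-constant function of the form $\chi_i\circ\w_\Sigma$ takes every value in $K$; hence $\chi_1\circ\w_\Sigma$, equivalently $\tr\circ\w_\Sigma$, takes every value in $K$.

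Next I would recall that over the algebraically closed field $K$, an element of $\SL_2(K)$ with trace $t\ne\pm2$ has characteristic polynomial $\lambda^2-t\lambda+1$ with two distinct roots $\lambda,\lambda^{-1}$ (and $\lambda\ne\pm1$), hence is semisimple and conjugate to $\di(\lambda,\lambda^{-1})$; moreover two such elements are conjugate if and only if they have the same trace. Thus the non-central semisimple conjugacy classes of $\SL_2(K)$ are parametrized bijectively by the traces $t\in K\setminus\{2,-2\}$ (in the case $\ch K=2$ this condition reads $t\ne0$). Given such a class with parameter $t$, by the previous step there is a tuple $(g_1,\dots,g_m)\in\SL_2^m(K)$ with $\tr\bigl(\w_\Sigma(g_1,\dots,g_m)\bigr)=t$; since $t\ne\pm2$, the element $\w_\Sigma(g_1,\dots,g_m)$ is semisimple with the prescribed trace, hence lies in the given class. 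As $t$ was arbitrary, every non-central semisimple conjugacy class meets $\Imm\w_\Sigma$.

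I do not expect a genuine obstacle here: the whole force of the argument is in Theorem \ref{th11}, and what remains is just the observation that a non-central semisimple class in $\SL_2$ is pinned down by its trace, which turns ``attains every trace value'' into ``meets every non-central semisimple class''. The only point meriting a moment's attention is the exclusion of $t=\pm2$ (whose trace fibre also contains central and non-semisimple elements), but since $\chi_1\circ\w_\Sigma$ takes \emph{every} value in $K$ it in particular takes every value $t\ne\pm2$, so this costs nothing.
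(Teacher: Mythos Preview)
Your proof is correct and follows essentially the same approach as the paper: apply Theorem \ref{th11} with $n=2$, $i=1$ to obtain surjectivity of the trace, then use that in $\SL_2(K)$ a non-central semisimple conjugacy class is determined by its trace (which is $\ne\pm2$). Your write-up is in fact a bit more careful than the paper's, which writes $\chi_1(g)=\tr g$ (ignoring the sign) and leaves the ``trace $\ne\pm2$ forces semisimple non-central'' step implicit.
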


\begin{proof}
Let $C$ be a non-central semisimple conjugacy class of $\SL_2(K)$,
and let $\alpha = \chi_1(g) = \tr g$ for $g \in C$. Further, let
$(g_1, \dots, g_m)\in \SL_2^m(K)$ be such that $\tr (\w_\Sigma
(g_1, \dots, g_m)) = \alpha$. Then $\w_\Sigma (g_1, \dots,
g_m)\in C$.
\end{proof}

This result gives rise to an alternative proof of the following
theorem by Bandman and Zarhin.

\begin{cor} [cf. \cite{BZ}]
\label{cor2} The image of a non-trivial word map $\w\colon
\SL^m_2(K)\rightarrow \SL_2(K)$ contains every semisimple element
except, possibly, $-1$.
\end{cor}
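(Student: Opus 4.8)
The plan is to deduce Corollary \ref{cor2} from Corollary \ref{cor1}, treating a genuine word $w$ as a word with constants with $\Sigma = \emptyset$, and then handling separately the two remaining types of semisimple elements: the central element $-1$ (explicitly excluded) and the central element $+1$. First I would observe that by Borel's theorem $\w\colon \SL_2^m(K)\to\SL_2(K)$ is dominant, so its image contains a Zariski-dense subset; in particular $\Imm\w$ is not contained in any conjugacy class, and hence the trace function $\chi_1\circ\w = \tr\circ\w$ is not constant on $\SL_2^m(K)$. Thus the hypothesis of Corollary \ref{cor1} is satisfied, and every non-central semisimple conjugacy class of $\SL_2(K)$ meets $\Imm\w$. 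Since every non-central semisimple element of $\SL_2(K)$ lies in such a class, all of them belong to $\Imm\w$.

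It remains to show that the identity element $1$ also belongs to $\Imm\w$; this is the one point of the argument that does not come for free from Corollary \ref{cor1}, since $\{1\}$ is a central conjugacy class and the trace function takes the value $2$ there but we have not shown $2$ is attained. Here I would use the nontriviality of $w$ together with the structure of $\SL_2$: one can always substitute the identity for all but one variable, reducing to a power map $g\mapsto g^k$ with $k\neq 0$, and $g^k = 1$ has a solution (any $k$-th root of unity times a suitable semisimple or unipotent element; concretely $g = 1$ itself works), so $1\in\Imm\w$. Alternatively, and perhaps more cleanly, one notes directly that $\w(1,\dots,1) = w(1,\dots,1) = 1$, so the identity is trivially in the image. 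Either way, $1\in\Imm\w$.

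Putting these together: $\Imm\w$ contains every non-central semisimple element (by Corollary \ref{cor1}) and contains $1$ (trivially), hence contains every semisimple element of $\SL_2(K)$ except possibly $-1$, which is exactly the assertion. The main --- really the only --- obstacle is the verification that $\tr\circ\w$ is genuinely non-constant for every nontrivial word $w$; this is precisely where Borel's dominance theorem is essential, since a priori a word map could be constant modulo conjugation even while being nonconstant as a map to $\SL_2$, and it is Borel's theorem that rules this out by forcing the image to be dense, hence to meet infinitely many conjugacy classes and so realize infinitely many trace values.
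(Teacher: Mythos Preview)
Your proof is correct and follows essentially the same approach as the paper: view $w$ as a word with constants with $\Sigma=\emptyset$, apply Corollary~\ref{cor1}, and observe that $1=\w(1,\dots,1)$ is trivially in the image. The one point where you are more thorough than the paper is in explicitly invoking Borel's theorem to verify that $\chi_1\circ\w=\tr\circ\w$ is non-constant; the paper's proof leaves this implicit, so your argument is in fact slightly more complete.
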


\begin{proof}
We may view the word $w$ as $w_\Sigma$ with $\Sigma = \emptyset$.
Also, the identity element $1$ is always in $\Imm\, w$.
\end{proof}

Here is another corollary.

\begin{theorem} \label{th3}
Let $G$ be a simple algebraic group. Suppose that $G$ is not of type $A_r, r > 1$,
$D_{2k+1}$, $k > 1$, or $E_6$, and let $\w\colon G^m \rightarrow G$
be a non-trivial word map. Then every regular semisimple element of
$G$ is contained in $\Imm \w$. Moreover, for every semisimple $g\in
G$ there exists $g_0\in G$ of order $\leq 2$ such that $gg_0\in
\Imm\,\w$.
\end{theorem}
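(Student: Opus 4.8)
I would start from the arithmetic of Weyl groups: the types excluded in the statement are exactly the simple types for which the longest Weyl element $w_0$ is not $-\mathrm{id}$, so for the groups $G$ under consideration one has $-1\in W$. This gives two structural inputs I intend to exploit: every element of $G$ (in particular every semisimple one) is conjugate to its inverse, and $Z(G)$ is an elementary abelian $2$-group, so each of its elements has order $\le 2$. The plan is to prove the sharper assertion
$$
(\star)\qquad \Imm(\pi\circ\w)\ \text{contains every regular point of}\ T/W ,
$$
and to derive both statements of the theorem from $(\star)$.

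Granting $(\star)$, the first assertion is immediate: over a regular point the fibre of $\pi\colon G\to T/W$ is a single regular semisimple conjugacy class and $\Imm\w$ is conjugation-invariant, so $\Imm\w$ contains every regular semisimple element. For the ``Moreover'' part let $g\in G$ be semisimple. If $g$ is regular, take $g_0=1$; if $g\in Z(G)$, take $g_0=g$, which has order $\le 2$, so $gg_0=g^2=1\in\Imm\w$. If $g$ is semisimple, non-central and non-regular, I would use that $-1\in W$ forces $G$ to contain enough involutions to find one, $g_0$ (say a suitable product of a $2$-torsion element of $T$ with a representative in $N_G(T)$ of a reflection), such that $gg_0$ lies in the regular semisimple locus; then $gg_0\in\Imm\w$ by the first assertion.

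It remains to prove $(\star)$. In rank one this is Corollary \ref{cor2} for $\SL_2$ (there $T/W\cong\mathbb A^1$ via the trace, $\chi_1\circ\we$ is homogeneous in each slot by Proposition \ref{pr1.1}, so by Lemma \ref{pr2} $\chi_1\circ\w$ is constant or onto $K$, and Borel's dominance theorem kills the constant case) together with its pushforward along $\SL_2\to\PGL_2$. For higher rank the natural engine is a rank induction in the spirit of the proof of Theorem \ref{th2}: fix a chain $G_1<\dots<G_\ell=G$ of $T$-invariant simple subgroups with $\rank G_i=i$ and compatible maximal tori $T_i$; the restriction of $\w$ to $G_i^m$ is again a non-trivial word map, so by induction $\Imm(\pi_i\circ\w|_{G_i^m})$ contains the regular locus of $T_i/W_i$, and the induced maps $T_i/W_i\to T/W$, together with Borel's theorem, put inside $\Imm(\pi\circ\w)$ a dense open subset of $T/W$ along with certain lower-dimensional subvarieties meeting the regular locus.

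I expect the hard part to be precisely the final step of $(\star)$: Borel's theorem yields only denseness of $\Imm(\pi\circ\w)$ in $T/W$, and the inductive pieces are of positive codimension, so neither of the two ingredients by itself forces $\Imm(\pi\circ\w)$ to swallow the whole regular locus. Closing this gap — equivalently, showing that the proper closed complement $(T/W)\setminus\Imm(\pi\circ\w)$ avoids every regular point — is where the hypothesis $-1\in W$ must be used in an essential way, through the reality of conjugacy classes together with a dimension analysis (using the structure of $Z(G)$) of the possible locus of missing regular values; and it is exactly this feature that collapses for the excluded types $A_r$ ($r>1$), $D_{2k+1}$ ($k>1$), $E_6$, which is why the statement is restricted to the remaining types.
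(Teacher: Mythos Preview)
Your observation that the excluded types are exactly those with $-1\notin W$ is correct and is in fact equivalent to the characterisation the paper uses, but the paper exploits the \emph{concrete} form of this condition rather than the abstract one: $-1\in W$ holds precisely when the root system contains $r=\rank G$ pairwise orthogonal roots, i.e.\ an $A_1^r$ subsystem. The paper's proof (Lemma \ref{lem:roots}) verifies this case by case, obtains a homomorphism $\Psi\colon\prod_{i=1}^{r}\SL_2\to G$ with $\Psi(\prod T_i)=T$, and then reduces everything to the single known case $\SL_2$ (Corollary \ref{cor2}). No statement like your $(\star)$ is ever needed: for $t=\Psi(t_1,\dots,t_r)$ one has $t\in\Imm\w$ whenever each $t_i\ne -1$, and one checks directly that regularity of $t$ forces $t_i\ne -1$ for all $i$. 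The ``Moreover'' is obtained by flipping the sign in each coordinate where $t_i=-1$; the resulting $tt_0$ need not be regular at all, it simply lands in $\Imm\w$ for the same coordinatewise reason.

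By contrast, your plan has two genuine gaps. First, you yourself flag that the rank-induction machinery of Theorem \ref{th2} gives only density of $\Imm(\pi\circ\w)$, and you offer no mechanism to upgrade this to your $(\star)$; invoking ``$-1\in W$ together with a dimension analysis'' is a hope, not an argument, and it is exactly this step that the paper bypasses entirely via the $A_1^r$ decomposition. Second, your treatment of the ``Moreover'' clause is incorrect as stated: for a non-central, non-regular semisimple $g$ you propose to find an involution $g_0$ with $gg_0$ \emph{regular}, but no such $g_0$ need exist (and you give no construction). The paper does not attempt this; its $g_0=\Psi(t_0)$ produces an element $gg_0$ that is in $\Imm\w$ directly, without any regularity claim. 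If you want to salvage your approach, the cleanest fix is to abandon $(\star)$ and the ``regularise by an involution'' idea, and instead translate $-1\in W$ into the $A_1^r$ subsystem and argue coordinatewise as above.
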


\begin{proof}
Let $R$ be an irreducible root system of rank $r$. Let us check the
property: there exists a  root subsystem $R^\prime \subset R$ such that
$$
R^\prime = R_1\cup R_2\cup\cdots \cup R_r\subset R \,\,\,\text{where}\,\,\,R_i = A_1.\eqno(*)
$$
Property (*) implies that
$$
G_1\times G_2\times \cdots \times G_r\leq
G_R\,\,\,\text{where}\,\,\,G_i = SL_2(K), PGL_2(K)\eqno(**)
$$

The following fact is apparently well known (one can extract it,
e.g., from \cite[Table 5 on page 234]{GOV}). We present a proof for
the sake of completeness. We use the notation of \cite{Bou}
throughout.

\begin{lemma} \label{lem:roots}
Property (*) holds for all irreducible root systems except for
$A_r$, $r > 1$, $D_{2k+1}$, $k > 1$, and $E_6$.
\end{lemma}

\begin{proof}

{\bf Case $A_r$.}  Obviously, (**) does not hold ($\SL_2^r \nless
\SL_{r+1}$). Then (*) does not hold for $r > 1$.

\bigskip

{\bf Case $B_r$.} We have $R = \langle \e_1 -\e_2, \dots,
\e_{r-1}-\e_r, e_r\rangle$.

\medskip

{\bf Subcase $r = 2m +1$.} We have $R_1 = \langle \e_1 -\e_2\rangle
, R_2 =\langle  \e_1 +\e_2\rangle, R_3 = \langle \e_3 -\e_4\rangle ,
R_4 =\langle \e_3 +\e_4\rangle, \dots, R_{2m} = \langle \e_{2m-1}
+\e_{2m}\rangle, R_{2m+1} = \langle \e_{2m+1}\rangle$.

\medskip

{\bf Subcase $r = 2m$.} We have $R_1 = \langle \e_1 -\e_2\rangle ,
R_2 =\langle  \e_1 +\e_2\rangle, R_3 = \langle \e_3 -\e_4\rangle ,
R_4 =\langle \e_3 +\e_4\rangle, \dots, R_{2m} = \langle  \e_{2m-1}
+\e_{2m}\rangle.$

\bigskip

{\bf Case $C_r$.} We have $R_i = \langle 2\e_i\rangle$ (long roots).

\bigskip

{\bf Case $D_{2m}.$} The same as $B_{2m}$.

\bigskip

{\bf Case $D_{2m+1}$.} Consider the standard representation of
$D_{2m+1}$ with weights $\pm \e_1, \dots, \pm \e_{2m+1}$. For a root
$\alpha = \e_i \pm \e_j$, the semisimple root subgroup $h_\alpha(t)$
acts non-trivially exactly on the four weight vectors corresponding
to $\pm \e_i, \pm \e_j$. Then if (*) holds, we can divide the
dimension of the representation  by $4$. But this dimension equals
$2(2m+1)$, so (*) does not hold for $r = 2m+1$.

\bigskip

{\bf Case $\bf E_6$.} In $E_6$ there are no six mutually orthogonal
positive roots. Indeed, since all roots lie in the same $W$-orbit,
we may start with
$$
\alpha = \frac{1}{2}(\e_1+\e_2 +\e_3+\e_4+\e_5 -\e_6-\e_7+\e_8).
$$
There are altogether fifteen positive roots orthogonal to $\alpha$:
ten roots of the form
$$
\beta = -\e_i +\e_j,\,\,1\le i< j\leq 5,
$$
and five roots of the form
$$
\gamma =  \frac{1}{2}(\pm \e_1 \pm \e_2 \pm \e_3 \pm \e_4 \pm \e_5
-\e_6-\e_7+\e_8)
$$
(where the number of minus signs is equal to $4$). The set of roots
of the form $\beta$ contains orthogonal subsets of size at most two,
and the set of roots of the form $\gamma$ contains no orthogonal
subsets. Hence one can find at most three mutually orthogonal
positive roots, and thus (*) does not hold.

\bigskip

{\bf Case $E_7$.} Here $D_6 \cup \langle \e_7-\e_8\rangle \subset
R$, and therefore (*) holds.

\bigskip

{\bf Case $E_8$.} Here $D_8 \subset E_8$, and therefore (*) holds.

\bigskip

{\bf Case $F_4$.} Here $D_4\subset F_4$, and therefore (*) holds.

\bigskip

{\bf Case $G_2$.} Here $\langle \e_1 -\e_2\rangle \cup \langle -
2\e_3 +\e_1 +\e_2\rangle\subset G_2$, and therefore (*) holds.
\end{proof}

We can now prove the theorem. Let $\Gamma_i = \SL_2(K)$, and let
$$
\Psi\colon \prod_{i=1}^r\Gamma_i \rightarrow G
$$
be the natural homomorphism induced by the inclusion (**).

Denote by $\w_i$ the word map $\Gamma_i^m\rightarrow \Gamma_i$
defined by the same word $w$. Then $\Imm \w_i$ contains every
semisimple element of $\Gamma_i$ except, possibly, $-1$.

Let $T_i$ be a maximal torus of $\Gamma_i$.

We have $\Psi(\prod_{i=1}^r T_i)= T$. (Indeed, $\Psi(\prod_{i=1}^r
T_i)\subset T$, and $\Psi(\prod_{i=1}^r T_i)$ is a torus of
dimension $r = \dim T $.) Let $t \in T$. Then
$$
t = \Psi(t_1,t_2,\dots, t_r)\,\,\,\text{for some}\,\,\,t_i \in T_i.
$$
First suppose that $t_i\ne -1$ for every $i$. Then
$$
t_i = \w_i(g_{i1}, \dots, g_{im})\,\,\,\text{for
some}\,\,\,\,(g_{i1}, \dots, g_{im})\in \Gamma_i^m,
$$
and therefore
$$
t = \w(\gamma_1, \dots, \gamma_m) \,\,\,\text{where}\,\,\,\gamma_j
= \Psi(g_{1j},g_{2j},\dots ,g_{rj}).
$$
Note that the condition $t_i\ne -1$ holds for all regular semisimple
elements $t$ (indeed, if $t_i = -1$, then $\Psi(1, \dots, 1, u_i, 1\dots 1)\ne 1$ commutes
with $t$ for a non-trivial unipotent element $u_i\in
\Gamma_i\thickapprox \SL_2(K)$). Then all such elements lie in
$\Imm\, \w$.

If $t_i  = -1$ for some $ i$, then we may take
$$
t_0 = (t_1^\prime, t_2^\prime, \dots,
t_r^\prime)\,\,\,\text{where}\,\,\,t_i^\prime = \begin{cases} -
1,\,\,\text{if}\,\,\,t_i = -1\\  1\,\,\,\text{if}\,\,\,t_i \ne
-1\end{cases}.
$$
Then the order of $\Psi(t_0)$ is at most two, and $\Psi(tt_0) \in
\Imm\, \w$.
\end{proof}

\begin{remark}
Presumably, Theorem \ref{th11} can be extended in the following way.
Instead of one function $\chi\circ \w$ one could consider the map
$\pi \circ \w$ where $\pi  =(\chi_1, \dots, \chi_{n-1}) \colon
\SL_n\rightarrow T/W = \mathbb A^{n-1}$ is the quotient map. Say,
for a tuple $\alpha = (\alpha_1, \dots, \alpha_{n-1})\in T/W$ one can
consider the system of equations $$[\chi_i \circ \we (Y)]^n
-\alpha_i \prod_{r=1}^m \Delta_r^{d_{ir}}(Y)=0$$ where $d_{ir}$ is
the homogeneous degree of $\chi_i \circ \we (Y)$ with respect to the
variables $y_{ij}^r$ (see Proposition \ref{pr1.1}, Lemma \ref{pr2}).
Obviously, this system has solutions in $\Ma_n^m(K)$. If the variety
of these solutions is not contained in the variety
$\prod_{r=1}^m\Delta_r = 0$, we can find a solution in $\GL_n^m(K)$
of the system $\chi_i \circ \w (Y) -\sqrt[n]{\alpha_i}  = 0$ for
some $Y\in \SL_n^m(K)$.
\end{remark}

\begin{remark}
Note that if we could prove that $\pi\circ \w$ is a surjective map,
we would have every regular semisimple element of any simple group
$G$ in $\Imm \w$. Indeed, in every irreducible root system of rank
$r$ there is a subsystem of rank $r$ which is a union of systems
$A_i$ (see, e.g., \cite{Bo1}).
\end{remark}



\section{Representation varieties and generic groups}

{\it General constructions} (see, e.g., \cite{LM}, \cite{Pl},
\cite{PBK}, \cite{Si}).

Let $\Gamma = \langle g_1, \dots, g_m\rangle $ be a finitely
generated group, and let $\mathfrak{R}_{\Gamma}\subset F_m$ be the
set of all relations of $\Gamma$.


Put
\begin{equation}
R(\Gamma, G) = \{(x_1, \dots, x_m) \in
G^m\,\,\,\mid\,\,\,\omega(x_1, \dots, x_m) = 1\,\,\,\,\text{for
every}\,\,\, \omega \in \mathfrak{R}_{\Gamma}\}.\label{eqn1}
\end{equation}
Obviously, $R(\Gamma, G)$ is a Zariski closed subset of $G^m$ (which
is defined over $K$), and for every $(x_1, \dots, x_m)\in R(\Gamma,
G)$ the subgroup $\langle x_1, \dots, x_m\rangle \leq G$ is a
quotient of $\Gamma$.  One can identify the sets
$$
R(\Gamma, G)  = \operatorname{Hom}(\Gamma, G)
$$
using the one-to-one correspondence
$$
(x_1, \dots, x_m)\in R(\Gamma, G) \leftrightarrow \rho \in
\operatorname{Hom}(\Gamma, G)
$$
given by
$$
(\rho(g_1), \dots, \rho(g_m)) = (x_1,\dots, x_m).
$$
The set $R(\Gamma, \GL_n(K))$ is called {\it the variety of
$n$-dimensional representations of $\Gamma$}.

\bigskip

The ``variety'' $R(\Gamma, G)$ may be non-reduced and reducible, so
the scheme language is most appropriate, see, e.g., \cite{Si}.
However, we will freely use the abusive term ``variety'' in what
follows.

Let $R(\Gamma, G) = \cup_i R(\Gamma, G)^i$ be the decomposition into
the union of irreducible closed subsets. Then we have the following
property for the components.

\begin{prop}
\label{pr51} Let $K$ be an algebraically closed field of infinite
transcendence degree over
a prime subfield.
Then for each $i$ there
exists a dense quasi-open subset $U^i\subset R(\Gamma, G)^i$ such
that for every $(g_1, \dots, g_m)\in U^i$ the subgroup $\langle g_1,
\ldots, g_m\rangle $ is isomorphic to a fixed quotient of $\Gamma$.
\end{prop}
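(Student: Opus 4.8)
The statement is about the representation variety $R(\Gamma,G)$ of a finitely generated group $\Gamma = \langle g_1,\dots,g_m\rangle$: on each irreducible component $R(\Gamma,G)^i$ there is a dense quasi-open subset on which the image subgroup $\langle g_1,\dots,g_m\rangle \le G$ is a fixed quotient of $\Gamma$. The essential point is that, while for each individual tuple $(g_1,\dots,g_m)\in R(\Gamma,G)^i$ the image is \emph{some} quotient of $\Gamma$ (this is immediate from the definition in \eqref{eqn1}), the \emph{isomorphism type} of this quotient should be constant on a large subset of the component. I would obtain this by writing the ``non-maximal image'' locus as a countable union of proper closed subsets and invoking the Baire-type / quasi-open formalism already set up in the paper.

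\emph{Step 1: enumerate the relations.} Fix a component $Z := R(\Gamma,G)^i$, which is irreducible. For a word $\omega\in F_m\setminus \mathfrak R_\Gamma$ (i.e.\ $\omega$ is not a relation of $\Gamma$), let
$$
Z_\omega \;=\; \{(g_1,\dots,g_m)\in Z \;\mid\; \omega(g_1,\dots,g_m) = 1\}.
$$
This is a Zariski closed subset of $Z$. The key dichotomy: either $Z_\omega = Z$, or $Z_\omega \subsetneq Z$ is a proper closed subset (and then, since $Z$ is irreducible, $Z_\omega$ has strictly smaller dimension). Let $\mathfrak N \subset F_m$ be the set of those $\omega\notin\mathfrak R_\Gamma$ for which $Z_\omega \subsetneq Z$, and let $\mathfrak S \subset F_m$ be the set of those $\omega\notin\mathfrak R_\Gamma$ for which $Z_\omega = Z$. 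Set
$$
U^i \;=\; Z \setminus \bigcup_{\omega\in\mathfrak N} Z_\omega .
$$
Since $F_m$ is countable, this is a countable intersection of dense open subsets of $Z$, hence a dense quasi-open subset in the sense defined in the Notation section.

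\emph{Step 2: identify the quotient.} Let $N$ be the normal subgroup of $F_m$ generated by $\mathfrak R_\Gamma \cup \mathfrak S$, and put $Q := F_m/N$, so $Q$ is a quotient of $\Gamma = F_m/\langle\!\langle\mathfrak R_\Gamma\rangle\!\rangle$. For any $(g_1,\dots,g_m)\in U^i$: every $\omega\in\mathfrak R_\Gamma\cup\mathfrak S$ evaluates to $1$ (by definition of $\mathfrak S$ and of $R(\Gamma,G)$), so the evaluation epimorphism $F_m\twoheadrightarrow\langle g_1,\dots,g_m\rangle$ factors through $Q$; conversely, if $\omega\notin N$ then $\omega\notin\mathfrak R_\Gamma$ and $\omega\notin\mathfrak S$, hence $\omega\in\mathfrak N$, hence $Z_\omega\not\ni(g_1,\dots,g_m)$, i.e.\ $\omega(g_1,\dots,g_m)\ne 1$. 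Therefore the evaluation map induces an \emph{isomorphism} $Q\xrightarrow{\ \sim\ }\langle g_1,\dots,g_m\rangle$ for every tuple in $U^i$, which is exactly the assertion, with ``fixed quotient'' $= Q$.

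\emph{The main obstacle.} The genuinely delicate point is the passage from ``$\omega$ is a relation for \emph{this particular} tuple $(g_1,\dots,g_m)\in U^i$'' back to ``$\omega\in\mathfrak S$'', i.e.\ ensuring the argument is not circular: one must first \emph{define} $\mathfrak S$ purely in terms of the geometry of $Z$ (as those $\omega$ with $Z_\omega = Z$), independently of any chosen point, and only afterwards check that a point of $U^i$ sees exactly the relations in $N$. Here the hypothesis that $K$ has infinite transcendence degree over the prime field enters: it guarantees that $U^i$ is nonempty (a countable intersection of nonempty Zariski-open subsets of an irreducible $K$-variety is nonempty when $K$ is ``large'' in this sense — a field with only countably many elements could fail this), so that the isomorphism type $Q$ is actually realized. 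I expect the bulk of the write-up to consist of carefully stating this nonemptiness/genericity fact (it is the standard reason the transcendence-degree hypothesis appears in this circle of ideas) and verifying that $\bigcup_{\omega\in\mathfrak N}Z_\omega$ is a \emph{countable} union of \emph{proper} closed subsets, so that $U^i$ indeed qualifies as a dense quasi-open subset of the component.
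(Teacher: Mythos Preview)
Your proof is correct and follows essentially the same approach as the paper's own proof: define the closed subsets $Z_\omega$, split the words into those vanishing identically on the component and those that do not, remove the countable union of proper closed subsets indexed by the latter, and invoke the infinite-transcendence-degree hypothesis (the paper cites \cite{Bo2}) to see that the resulting quasi-open set is nonempty (hence dense). Your write-up is in fact a bit more explicit than the paper's in identifying the fixed quotient $Q=F_m/N$ and verifying both inclusions for the kernel of the evaluation map; the paper simply observes that every tuple in $U^i$ has the same relation set $\{\omega\in F_m\mid R(\Gamma,G)^i\subset X_\omega\}$ and stops there.
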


\begin{proof}
For any $\omega\in F_m$ the set
$$
X_\omega = \{(g_1, \dots, g_m) \in G^m\,\,\,\mid\,\,\,\omega(g_1,
\dots, g_m) = 1\}
$$
is a proper closed subset of $G^m$. Hence $R(\Gamma, G)^i \cap
X_\omega$ is a closed subset of $R(\Gamma, G)^i$. Let now
$$
Q_i = \{\omega \in F_m\,\,\,\,\mid\,\,\,R(\Gamma, G)^i\nsubseteq X_\omega\}.
$$
Then
$$\bigcup_{\omega\in Q_i}R(\Gamma, G)^i \cap X_\omega
$$
is a countable union of proper closed subsets of $R(\Gamma, G)^i$.
Since $K$ is of infinite transcendence degree over
a prime subfield,
we have
$$
U^i =R(\Gamma, G)^i \setminus \bigcup_{\omega\in Q_i} (R(\Gamma, G)^i\cap X_\omega) \ne \emptyset
$$
(see \cite{Bo2}), and therefore $U^i$ is a dense quasi-open subset
of $R(\Gamma, G)^i$. Every group $\langle g_1, \dots, g_m\rangle$
for $(g_1, \dots, g_m)\in U^i$ has the same set of relations:
$\{\omega \in F_n\,\,\,\mid\,\,\,R(\Gamma, G)^i\subset X_\omega\}$.
\end{proof}

\begin{definition}
A group isomorphic to $\langle g_1, \dots, g_m\rangle$ for $(g_1,
\dots, g_m)\in U^i$ will be called the {\em generic group} of the
component $R(\Gamma, G)^i$.
\end{definition}

For every $\rho\in R(\Gamma, G)$ and every $g \in G$  the map
$\rho_g\colon \Gamma \rightarrow G$ given by $\rho_g(\gamma) =
g\rho(\gamma)g^{-1}$ is also an element of $R(\Gamma, G)$. Hence we
have a regular action of the algebraic group $G$ on the affine set
$R(\Gamma, G)$. If $G = \GL_n(K)$ or $\SL_n(K)$, orbits correspond
to classes of equivalent representations. In these cases (or, more
generally, if $G$ is a reductive group) there exists a categorical
quotient 
$R(\Gamma, G)\sslash G$ which is also a closed affine set.
There is a one-to-one correspondence between points of $R(\Gamma,
\GL_n(K))\sslash \GL_n(K)$  (resp.
$R(\Gamma,\SL_n(K))\sslash\SL_n(K)) $ and classes of completely
reducible $n$-dimensional representations of $\Gamma$ (see
\cite{LM}).

\bigskip

Let $\mathfrak{B}_\Gamma\subset\mathfrak{R}_\Gamma$ be a minimal set
of relations (that is, $\mathfrak{B}_\Gamma$ is a minimal set of
generators of the group $\mathfrak{R}_\Gamma$ as a normal subgroup
of $F_m$). Since the equality $\omega (x_1, \dots, x_m) = 1$, where
$x_1, \dots, x_m \in G, \omega \in F_m$, implies that $\omega
(yx_1y^{-1}, \dots, yx_my^{-1}) = 1$ for every $y \in G$, we may
reduce the set of all relations $\mathfrak{R}_\Gamma$ in
\eqref{eqn1} to the set $\mathfrak{B}_\Gamma$.  Let us now assume
that $\mathfrak{B}_\Gamma = \{\omega_1, \dots, \omega_k\}$ is a
finite set. Then we may consider the map
$$
\Phi_{\mathfrak{B}_\Gamma}\colon G^m\rightarrow G^k
$$
defined by
$$
\Phi_{\mathfrak{B}_\Gamma} (x_1,\dots, x_m) = (\omega_1(x_1,\dots,
x_m), \dots, \omega_k (x_1,\dots, x_m)).
$$
Then
\begin{equation}
R(\Gamma, G) = \Phi_{\mathfrak{B}_\Gamma}^{-1}(e_k)\label{eqn2}
\end{equation}
(here $e_k = (1, \dots ,1)$ is the identity of $G^k$), see
\cite{LM}.

\bigskip

{\it  Finitely generated one-relator groups.} Let
$\mathfrak{B}_\Gamma = \{w\}$.  Then we will write $\Gamma_w$
instead of $\Gamma$ to emphasize the relation $w$.

Further, here $k=1$ and the map $\Phi_{\mathfrak{B}_\Gamma}\colon
G^m\rightarrow G$ is the word map
$$
\tilde{w}\colon G^m\rightarrow G.
$$
In this case we denote
$$
\tilde{w}^{-1}(1) = \W_w.
$$

From \eqref{eqn2} we have $\W_w = R(\Gamma_w, G).$ Denote by
$\W_w^i$ the irreducible components of $\W_w$ where $i = 0, 1,
\dots$.

\bigskip

In what follows, the image of the identity element $1$ of a fixed
torus $T$ of $G$ in the quotient variety $T/W$ is also denoted by
$1$. Define
$$\T_w = (\pi\circ\tilde{w})^{-1} (1)$$
and denote by $\T_w^j$ the irreducible components of $\T_w$ where $j
= 0, 1, \dots$. Then the following simple statement is of key
importance.

\begin{prop}\label{pr3333}
$\, $
\begin{itemize}
\item[(a)] For every irreducible component $\W_w^i$ of $\W_w$ there is an
irreducible component $\T_w^j$ of $\T_w$ which contains $\W_w^i$.
\item[(b)] If $\W_w \ne \T_w$, then $\Imm \, \w$ contains a non-trivial unipotent element $u$.
\end{itemize}
\end{prop}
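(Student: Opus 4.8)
The plan is to reduce both parts to two elementary facts: the inclusion $\W_w \subseteq \T_w$ of closed subsets of $G^m$, and the classical description of the fibre $\pi^{-1}(\pi(1))$. Put $\mathcal{U} := \pi^{-1}(\pi(1)) \subseteq G$. As recalled in the introduction (see \cite{SS}), $\mathcal{U}$ is precisely the unipotent subvariety of $G$; equivalently, $\pi(x) = \pi(1)$ holds if and only if the semisimple part of $x$ is trivial, i.e.\ $x$ is unipotent. Since $\T_w = (\pi\circ\w)^{-1}(\pi(1)) = \w^{-1}(\mathcal{U})$ and $1 \in \mathcal{U}$, we obtain immediately $\W_w = \w^{-1}(1) \subseteq \w^{-1}(\mathcal{U}) = \T_w$.

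For part (a) I would argue purely topologically. Each component $\W_w^i$ is an irreducible closed subset of $G^m$, and by the inclusion above it is contained in $\T_w$. Since $G^m$ is a Noetherian topological space, every irreducible subset of it lies in a maximal irreducible closed subset; applied inside $\T_w$, this shows that $\W_w^i$ is contained in one of the irreducible components $\T_w^j$ of $\T_w$.

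For part (b), assume $\W_w \ne \T_w$. Together with $\W_w \subseteq \T_w$ this yields a strict inclusion $\W_w \subsetneq \T_w$, so there is an $m$-tuple $\mathbf{g} = (g_1, \dots, g_m) \in \T_w \setminus \W_w$. Since $\mathbf{g} \in \T_w = \w^{-1}(\mathcal{U})$, the element $u := \w(\mathbf{g})$ is unipotent; since $\mathbf{g} \notin \W_w = \w^{-1}(1)$, we have $u \ne 1$. Hence $u$ is a non-trivial unipotent element of $\Imm\,\w$, which is exactly assertion (b).

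I do not anticipate a genuine obstacle: the argument is essentially formal. The only input that is not pure bookkeeping is the identification of $\pi^{-1}(\pi(1))$ with the unipotent variety of $G$, which is a standard result of Steinberg's theory and is already used in the introductory discussion; everything else is the inclusion $\W_w\subseteq\T_w$ and the topological remark that an irreducible subset sits inside some irreducible component.
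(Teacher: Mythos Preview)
Your proof is correct and follows essentially the same approach as the paper: both arguments rest on the inclusion $\W_w\subseteq\T_w$ (via the identification of $\pi^{-1}(\pi(1))$ with the unipotent variety) and the trivial topological fact that an irreducible subset lies in an irreducible component, with part~(b) amounting to picking a point of $\T_w\setminus\W_w$. The paper's version is simply terser.
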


\begin{proof}

$\,$

(a) Obviously, $\W_w\subset \T_w$ and hence every irreducible component of $\W_w$ is contained in an irreducible component of  $\T_w$.

(b) Indeed, the set $\T_w\subset G^m$ is exactly the set of $m$-tuples
$(g_1, \dots, g_m)\in G^m$ such that $w(g_1, \dots, g_m)$ is a
unipotent element.
\end{proof}

\begin{remark} \label{rem111}
Presumably, in (b) one can replace ``if'' with ``if and only if''.
\end{remark}

If $K$ is 
a field of infinite transcendence degree over a prime subfield, then
for each $i$ there exists a dense quasi-open subset $U^i\subset
\W_w^i(K)$ such that for every $(g_1, \dots, g_m)\in U^i$ the
subgroup $\langle g_1, \dots, g_m\rangle $ is isomorphic to a fixed
quotient of $\Gamma_w$ (Proposition \ref{pr51}).

The generic group  $\langle g_1, \dots, g_m\rangle $, where $(g_1,
\dots, g_m)\in U^i$, will be denoted by $\tilde{\Gamma}_w^i$. The
question on describing the possibilities for $\tilde{\Gamma}_w^i$
for a given group $\Gamma_w$ is interesting in its own right. More
specifically, answering it could help in describing words $w$ with
the condition $\W_w \ne \T_w$, which guarantees the existence of
non-trivial unipotent elements in $\Imm\,\w$. Below we consider some
examples when $\W_w \ne \T_w$.

\section{Examples of $R(\Gamma_w, \SL_2(\C))$}

In this section we use the following notation:

$G = \SL_2(\C)$;

$T, B, B^-, U, U^-$ are the sets of diagonal, upper and lower
triangular, and upper and lower unitriangular matrices;

$\dot w_0$ is an element of $N_G(T)\setminus T$;

$w \in F_2$;

$R(\Gamma_w, \SL_2(\C)) = \mathcal W_w = \bigcup_{i=1}^l \W^i_w$ is
the decomposition into the union of irreducible components;

$\T_w$ is the hypersurface in $G\times G$ defined by the equation
$\tr w(x,y)=2$;

$\T_w=\bigcup_{j=1}^e \T_w^j$ is the decomposition into the union of
irreducible components.

Obviously, for every $j$ we have
\begin{equation}
\dim \T_w^j = 5. \label{eqq1}
\end{equation}
Also, for every $i = 1, \dots, l$ there is $j = 1, \dots, e$ such
that $\W_w^i \subset T_w^j$ and
\begin{equation}
3\le\dim \W_w^i \le 5. \label{eqq2}
\end{equation}
(Indeed, the upper inequality follows from \eqref{eqq1}. Since
$\W_w$ is defined by three equations $w_{11} =1, w_{12}=w_{21} = 0$,
where
$$
\tilde{w}(x, y) = \begin{pmatrix}w_{11}(x, y)&w_{12}(x, y)\cr
w_{21}(x, y)&w_{22}(x, y)\cr \end{pmatrix}\,\,\,\mid\,\,x, y \in
G\},
$$
the dimension of every component of $\W_w$ is at least $\dim
(G\times G) - 3 = 6-3 = 3.$)

\bigskip

Note that according to Proposition \ref{pr3333},
$$\dim \W_w^i \leq 4 \Rightarrow \text{there exists a non-trivial unipotent element }\,\, u \in \Imm\,\w.\eqno(*)$$

We start with some simple examples.

\begin{example} \label{ex1} $\bf w = [x,y]$.
\end{example}
In this case $\W_w$ is classically known as the {\it commuting
variety} of $G$. See, e.g., \cite{RBKC} and the references therein
for its properties and related problems.

\begin{prop}
\label{pr37} The set $\W_w$ is irreducible, $\dim \W_w = 4$ and
\begin{equation}
\W_w = \overline{\{g (T\times T) g^{-1}\,\,\,\mid\,\,\,g \in
G\}}.\label{eq3.1}
\end{equation}

The set $\T_w$ is also irreducible, $\dim \T_w = 5$, and
$$
\T_w = \overline{\{g (B\times B) g^{-1}\,\,\,\mid\,\,\,g \in
G\}}.\label{eq3.2}
$$
\end{prop}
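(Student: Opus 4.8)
The plan is to treat the two statements in parallel, since the structure of the argument is the same: identify a natural "model" subset of $G\times G$ whose $G$-conjugates give, after taking closure, the full variety in question. For the commuting variety $\W_w$ with $w=[x,y]$, I would first observe that $(x,y)\in\W_w$ exactly when $x$ and $y$ commute, and that a generic pair of commuting elements of $\SL_2(\C)$ consists of two semisimple elements sharing a common eigenbasis, i.e.\ lies in $g(T\times T)g^{-1}$ for some $g\in G$. So I would set $Z=\{g(T\times T)g^{-1}\mid g\in G\}$, note $Z\subseteq\W_w$, and show $\overline Z=\W_w$. For the dimension: the map $G\times(T\times T)\to\W_w$, $(g,(t_1,t_2))\mapsto g(t_1,t_2)g^{-1}$ is dominant onto $\overline Z$; its generic fibre has dimension $\dim N_G(T)=\dim T=1$ (the stabilizer of a generic diagonal pair is $T$ itself, up to the finite Weyl piece), so $\dim\overline Z=\dim G+2\dim T-1=3+2-1=4$. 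The inequality $\dim\W_w^i\ge3$ from \eqref{eqq2} together with irreducibility of $\overline Z$ then forces $\W_w=\overline Z$ once I argue there is no other component: any pair in $\W_w\setminus Z$ has at least one non-semisimple (hence unipotent-times-central) element, and such pairs form a subset of dimension $\le4$ that is visibly in the closure of $Z$ (degenerate a diagonal matrix to $\pm$ a Jordan block while keeping the commuting condition). I should phrase this as: $\W_w$ is a closed subset of pure dimension $\ge3$, it is the union of finitely many irreducible components each $\ge3$-dimensional, and each is contained in $\overline Z$ because the locus where both coordinates are regular semisimple is dense in it — this last point being the one requiring care.

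For $\T_w$, the analogous model is $Y=\{g(B\times B)g^{-1}\mid g\in G\}$, the set of pairs that can be simultaneously conjugated into the Borel $B$. If $(x,y)\in Y$ then $[x,y]\in U$ is unipotent, so $\tr[x,y]=2$ and $Y\subseteq\T_w$. For the dimension of $\overline Y$ I would use the morphism $G\times(B\times B)\to G\times G$; the image is the closure of $Y$, the source has dimension $\dim G+2\dim B=3+4=7$, and the generic fibre has dimension $\dim B=2$ (a generic pair in $B\times B$ generates a subgroup whose "stabilizer for being conjugated into $B$" is $B$), giving $\dim\overline Y=7-2=5$, matching \eqref{eqq1}. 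Since $\T_w$ is a hypersurface in $G\times G$ it has pure dimension $5$; to conclude $\T_w=\overline Y$ (in particular that $\T_w$ is irreducible) I would show $\overline Y$ meets every component of $\T_w$ in a $5$-dimensional set, equivalently that the open dense subset of $\T_w$ where some point of $\langle x,y\rangle$ acts with a common eigenvector is exactly (the relevant part of) $Y$. The cleanest route: a pair $(x,y)$ with $\tr[x,y]=2$ either generates an irreducible subgroup of $\SL_2$ — but then $[x,y]$ unipotent forces a contradiction unless $[x,y]=1$, a lower-dimensional locus — or is reducible, i.e.\ has a common eigenvector, hence conjugate into $B$; so $\T_w$ is, up to a proper closed subset, equal to $Y$, whence $\overline Y=\T_w$ and $\T_w$ is irreducible.

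The main obstacle I anticipate is the second half of each statement — ruling out extra components and establishing irreducibility — rather than the dimension count, which is routine fibre-dimension bookkeeping. Concretely, the delicate claim is that the "generic" locus ($T\times T$-conjugates, resp.\ $B\times B$-conjugates) is \emph{dense} in $\W_w$, resp.\ in $\T_w$, and not just a component among others; this needs the classification of pairs $(x,y)$ in $\SL_2(\C)$ by reducibility/irreducibility of the generated subgroup, together with the observation that an irreducible pair cannot have non-trivial unipotent commutator. I would handle the residual non-semisimple (resp.\ irreducible-with-$[x,y]=1$) strata by an explicit degeneration argument showing they lie in the closure of the model set, keeping these computations to a minimum by invoking that the model set is already known to have the full expected dimension ($4$, resp.\ $5$).
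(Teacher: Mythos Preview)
Your overall strategy is sound and arrives at the same conclusions, but it diverges from the paper's proof in both halves and leaves the genuinely hard step in each case as an assertion.

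For $\W_w$, the paper does not attempt to prove irreducibility directly: it simply invokes Richardson's theorem \cite{Ri} on the commuting variety of a simply connected semisimple group, then reads off the description and the dimension from the density of commuting semisimple pairs. Your plan to prove irreducibility by hand, showing that every ``bad'' stratum (pairs with a central or $\pm$unipotent coordinate) lies in $\overline Z$, is feasible but is exactly the content of Richardson's argument in rank one; the degeneration you sketch (``degenerate a diagonal matrix to $\pm$ a Jordan block while keeping the commuting condition'') needs an actual one-parameter family, e.g.\ conjugating $T\times T$ by $\begin{pmatrix}1&1/s\\0&1\end{pmatrix}$ and letting the eigenvalues tend to $1$ at a matched rate. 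Without this, your claim that the regular-semisimple locus is dense in \emph{every} component of $\W_w$ is unproved.

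For $\T_w$, your route via the reducibility dichotomy is more conceptual than the paper's, which instead fixes $g_2=t\in T$ regular and shows by a direct Bruhat-type calculation that $\tr[g_1,t]=2$ forces $g_1\in B\cup B^-$. Your key input---``an irreducible pair cannot have non-trivial unipotent commutator''---is exactly the classical Fricke--Vogt criterion ($\tr[x,y]=2\Leftrightarrow\langle x,y\rangle$ is reducible), and the paper's computation is essentially a proof of this in the case $g_2$ regular semisimple. Note also that your parenthetical ``unless $[x,y]=1$'' is vacuous: in $\SL_2(\C)$ commuting elements always share an eigenvector, so an irreducible pair can never satisfy $[x,y]=1$. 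If you state and prove (or cite) the Fricke criterion, your argument for $\T_w$ is actually cleaner than the paper's, since it yields $\T_w=Y$ on the nose rather than via a density argument on components.
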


\begin{proof}
The irreducibility of the commuting variety in $G\times G$ has been
proven in \cite{Ri} in a more general case where $G$ is a semisimple
simply connected group. A general pair in $\W_w$ is a pair of
commuting semisimple elements. Hence we have \eqref{eq3.1}. Since the general $G$-orbit (under conjugation) of a pair $(t_1, t_2)$ is of dimension $\dim G/T =2$, we have
$\dim \, \W_w = 4$. 

\bigskip

Let $(g_1,g_2)\in \T_w^j$ be a general pair. We may assume $\pm 1\ne g_2 = t \in T$
(since $\dim \T_w^j=5$, it cannot happen that all pairs $(g_1, g_2)$
consist of $\pm$ unipotent elements). Show that $g_1 \in B\cup
B^{-}$. Assume the contrary. Then either $g_1 = v s u$ for some
$1\ne v \in U^{-}, 1\ne u \in U, s \in T$, or $g_1 = \dot w_0 u$ for
some $\dot w_0$ and $u \in U$. In the first case,
$$
[vsu, t] = vsutu^{-1}s^{-1}v^{-1}t^{-1} = vu^\prime v^\prime
\,\,\,\text{for some}\,\,\,v^{-1}\ne v^\prime \in U^-,\,\,1\ne
u^\prime\in U.
$$
Indeed, since both $s$ and $t$ are diagonal matrices in $\SL_2$ and
$t \ne \pm 1$, we have $txt^{-1} \ne x$ for every $x \in U$ or $x
\in U^-$, hence
$$vsutu^{-1}s^{-1}v^{-1}t^{-1} = v\underbrace{s\underbrace
{u (tu^{-1}t^{-1})}_{\in U, \ne 1} s^{-1}}_{:= u^\prime\in U, \ne 1}
\underbrace{(tv^{-1}t^{-1})}_{:=v^\prime \in U^-, \ne v^{-1}}=
vu^\prime v^\prime .$$
Hence $[vsu,t] \backsim v^{\prime\prime}
u^\prime$ for some $1\ne v^{\prime\prime} \in U^-, 1\ne u^\prime\in
U$. But an element of the form $v^{\prime\prime} u^\prime$ cannot
have the trace equal to $2$. This is a contradiction.

In the second case, we have $g_1 = \dot w_0 u$. Then $\tr [\dot w_0
u, t] = \tr t^{-2} \ne 2$. This is also a contradiction.

Thus, we have a pair $(g_1, g_2)$ where $\pm 1 \ne g_1\in T$ and
$g_2 \in B\cup B^-$. Hence $(g_1, g_2) \in \{g (B\times B) g^{-1}\,\,\,g \in G\}$.
This implies that
$$
\T_w^j \subset \overline{\{g (B\times B)
g^{-1}\,\,\,\mid\,\,\,g \in G\}}.
$$
The opposite inclusion $\T_w^j \supset \overline{\{g (B\times B)
g^{-1}\,\,\,\mid\,\,\,g \in G\}}$ follows from the equality  $[B, B]
= U$. \end{proof}



\begin{example} \label{ex2}
$\bf w = [x^m, y^n]$.
\end{example}

Denote $$\m  = \{k\,\,\,\in \N\,\mid\,\,\, 2< k \mid\,
2m\},\,\,\,\,\n = \{l \in \N\,\mid\,\,\,  2 < l \mid\,2n\}.$$ Let
$C_r \subset G$  be the conjugacy class of elements of order $r$.
\begin{prop}
\label{pr39} We have
$$\W_w^0 = \W_{[x, y]} \subset \T_w^0 = \T_{[x, y]}.$$
All other irreducible components are of one of the following forms:
$$\W_w^j = \T_w^j = C_j \times G$$
for $j \in \m $ or
$$\W_w^j = \T_w^j =  G\times C_j$$
for $j \in \n$.
\end{prop}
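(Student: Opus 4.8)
The plan is to analyze the equation $w(x,y) = [x^m, y^n] = 1$ (resp. $\tr[x^m,y^n] = 2$) by a case division on whether $x^m$ and $y^n$ are central, unipotent-times-central, or regular semisimple. First I would record the elementary fact about $\SL_2(\C)$: for $g \in \SL_2(\C)$, the power $g^m$ lies in a given conjugacy class of order $r$ precisely when $g$ is semisimple with eigenvalues that are $2r$-th roots of unity whose $m$-th powers are primitive $r$-th roots of $\pm 1$; in particular $C_r \subset G$ is the image of a union of such conjugacy classes under the power map, and crucially $g^m = \pm 1$ for $g$ in a fixed conjugacy class cut out by $\tr g$ being a specific algebraic number. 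The set $\{x \in G : x^m \in Z(G)\}$ is thus a finite union of conjugacy classes (dimension $2$ each) together with $Z(G)$ itself, and likewise the set where $x^m$ is $\pm$(unipotent) is a finite union of closures $\overline{C}$ of such classes. This gives the candidate components: either $x^m$ is regular semisimple and $\ne \pm 1$ and then $[x^m, y^n] = 1$ forces $y^n$ to commute with it, i.e. $(x^m, y^n)$ lies in the commuting variety restricted to a torus — this is the component $\W_{[x,y]}^0$; or $x^m \in Z(G)$ so that the relation is automatically satisfied for all $y$, giving $C_j \times G$ for $j \in \m$ (the value $j=2$, i.e. $x^m = -1$, is excluded since then $x \in Z(G)$ already lies in a lower-dimensional stratum absorbed into $\W^0_w$); symmetrically $G \times C_j$ for $j \in \n$.

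Next I would verify that each listed set is in fact an irreducible component of both $\W_w$ and $\T_w$. Each $C_j \times G$ is irreducible (conjugacy classes in $\SL_2$ are irreducible) of dimension $2 + 3 = 5$, which by \eqref{eqq1} is the maximal possible dimension for a component of $\T_w$, hence it cannot be properly contained in a larger irreducible subset of $\T_w$; since $C_j \times G \subset \W_w \subset \T_w$, it is a component of both. For $\W^0_w = \W_{[x,y]}$: by Proposition \ref{pr37} this is irreducible of dimension $4$, and one checks $\W_{[x,y]} \subset \W_{[x^m,y^n]}$ trivially (commuting $x,y$ have commuting powers). That it is a component — i.e. not contained in any $C_j \times G$ or $G \times C_j$ — follows because a generic pair of commuting semisimple elements has $x^m$ regular semisimple for generic $x$, so it escapes every $C_j \times G$. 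The reverse inclusion — that these exhaust all components — is the heart of the argument: one takes a generic pair $(g_1, g_2)$ in an arbitrary component $\W_w^i$, and using $[g_1^m, g_2^n] = 1$ together with the structure of centralizers in $\SL_2(\C)$ (the centralizer of a non-central element is a maximal torus or a Borel or $\pm U$), forces either $g_1^m \in Z(G)$, or $g_2^n \in Z(G)$, or both $g_1^m, g_2^n$ regular semisimple sharing a common torus. A parallel argument handles $\T_w$ using the trace computation as in the proof of Proposition \ref{pr37}: if neither $g_1^m$ nor $g_2^n$ is $\pm 1$, then a conjugation argument as in Example \ref{ex1} (where the roles of $x,y$ there are played by $g_1^m, g_2^n$) shows $\tr[g_1^m, g_2^n] = 2$ forces $g_1^m$ and $g_2^n$ into a common Borel, whose commutator subgroup is unipotent — but a generic such pair has both powers semisimple, landing in the torus, i.e. back in $\W^0_w$.

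The main obstacle I anticipate is the careful bookkeeping of which conjugacy classes $C_j$ actually arise and with what multiplicity: the condition "$g^m \in Z(G)$ and $g$ has order exactly $j$" must be matched against the divisibility constraints $2 < j \mid 2m$, and one must check that distinct $j$ give genuinely distinct (hence non-nested) components and that the "boundary" values ($j = 1, 2$, corresponding to $g^m = 1$ or $g^m = -1$ with $g$ itself central or of order $4$) are correctly either excluded or absorbed into $\W^0_w$. A secondary subtlety is confirming that for $\T_w$ no new component appears beyond the $\W_w$-components: a priori $\T_w$ could have a component where $[g_1^m, g_2^n]$ is a nontrivial unipotent for a generic pair, and one must rule this out — but since $\T_w^j$ has dimension $5$ and $C_j \times G$ already has dimension $5$ and lies in $\T_w$, and the "generic unipotent commutator" locus would require $g_1^m, g_2^n$ to lie in a common Borel with at least one of them non-diagonalizable there, a dimension count shows this locus has dimension $< 5$ unless it degenerates to one of the listed pieces. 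I would carry this out by the same trace manipulation already used in the proof of Proposition \ref{pr37}, which is why I expect the write-up to be short once the $\SL_2$ power-map dictionary is in place.
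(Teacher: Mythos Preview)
Your approach is correct and follows the same case-division as the paper: split according to whether $x^m \in Z(G)$, $y^n \in Z(G)$, or neither. However, you are over-anticipating difficulties that the paper dissolves with a single observation.

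The paper's entire proof of the $\W_w$ classification is this: suppose $x^m \ne \pm 1$ and $y^n \ne \pm 1$. In $\SL_2(\C)$, a non-central element and any of its non-central powers lie in the \emph{same} one-dimensional abelian subgroup (a torus if semisimple, a unipotent subgroup modulo centre otherwise). Hence $x$ lies in the unique such subgroup through $x^m$, and $y$ in the one through $y^n$. Since $[x^m,y^n]=1$ forces these two one-dimensional abelian groups to coincide, we get $[x,y]=1$ outright. Thus any pair outside $\{x^m=\pm 1\}\cup\{y^n=\pm 1\}$ already lies in $\W_{[x,y]}$; the complement breaks into the pieces $C_j\times G$ and $G\times C_j$, which are $5$-dimensional and therefore equal to the $\T_w$-components containing them. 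No genericity argument, no separate trace analysis for $\T_w$, no ``unipotent-commutator locus'' dimension count is needed.

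Your anticipated obstacles therefore largely evaporate. In particular, your plan to show a hypothetical extra $\T_w$-component has generic powers semisimple and then land back in $\W^0_w$ is unnecessary: the same one-line argument, applied with ``in a common Borel'' in place of ``commute'' (using Proposition~\ref{pr37}), shows directly that if $x^m,y^n\ne\pm 1$ and $\tr[x^m,y^n]=2$ then $(x,y)\in\T_{[x,y]}$. One small confusion to fix: in your parenthetical ``the value $j=2$, i.e.\ $x^m=-1$'', the index $j$ is the order of $x$, not a statement about $x^m$; the values $j=1,2$ are excluded from $\m$ precisely because then $x$ itself is central, so $C_j$ is zero-dimensional and $C_j\times G$ is absorbed into $\W^0_w$.
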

\begin{proof}
The existence of the components $\W_w^0 = \W_{[x, y]} \subset \T_w^0
$ is obvious.

Let now $(x, y) \in \W_w^j $ be such that $x^m\ne \pm 1, y^n \ne \pm
1$. Then the elements $x, x^m$ are either in the same torus of $G$
or in the same unipotent subgroup (modulo the centre). The same is
true for $y, y^n$. Since $[x^m, y^n] =1$ we then have $[x,y] =1$ and
therefore $(x, y) \in \W_w^0$. Thus, if $(x, y) \in \W_w^j\setminus
\W_w^0$ then either $x^m =\pm 1$ or $y^n = \pm 1$ and $\W_w^j$ is
one of the components for $j\in\m $ or $j \in \n$ which have been
pointed out in the statement. Since the components $\W_w^j$, $j>1,$
are isomorphic to the direct product of a conjugacy class $C_g$ for
a semisimple element $g$ of order $>2$ and the group $G$, we have
$\dim \W_w^j = 5$ and therefore $\W_w^j = \T_w^j$.
\end{proof}

\begin{definition}
\label{def40} We say that a subgroup $H\leq G$ is a free product
modulo the centre if $H/Z(G) \approx R\ast Q$ for some $R, Q\leq
G/Z(G)$. In this case we write
$$H = R\ast Q \mod Z(G)$$
\end{definition}

Now for the integer $j$ put
$$[j] = \begin{cases} j\,\,\,\text{if}\,\,\, j\,\,\,\text{is odd},\\
\frac{j}{2}\,\,\,\text{if}\,\,\, j\,\,\,\text{is even}.\end{cases}$$

\begin{prop}
\label{pr41} For $j>0$ we have $$\tilde{\Gamma}_w^j = \mathbb
Z_{[j]}\ast \mathbb Z \mod Z(G)\,\,\,\,\text{or}\,\,\,\,
\tilde{\Gamma}_w^j = \mathbb Z\ast \mathbb Z_{[j]} \mod Z(G)$$ where
$\mathbb Z$ is the infinite cyclic  group and $\mathbb Z_{[j]}$
is the cyclic group of order $[j]$.
\end{prop}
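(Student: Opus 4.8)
The plan is to identify the generic group $\tilde{\Gamma}_w^j$ by analysing the generic pair $(x,y)$ in the component $\W_w^j$ described in Proposition \ref{pr39}. By that proposition, for $j>0$ the component $\W_w^j$ is either $C_j\times G$ (for $j\in\m$) or $G\times C_j$ (for $j\in\n$); by symmetry I treat only the first case, say $\W_w^j = C_j\times G$. Thus a generic point is a pair $(x,y)$ where $x$ is a semisimple element whose order is $j$ (if $j$ is odd) or $j$ with $x^{j/2}=-1$ (if $j$ is even, so that $x$ has order $j$ but its image in $\PGL_2$ has order $j/2$), and $y\in G$ is a generic element. The first step is to record that the image of $x$ in $G/Z(G)=\PGL_2(\C)$ has order exactly $[j]$, by the definition of $[j]$ and the fact that $-1$ is the only nontrivial central element of $\SL_2$.

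The core step is then to show that for generic such $(x,y)$, the subgroup $\langle x,y\rangle$, taken modulo $Z(G)$, is the free product $\Z_{[j]}\ast\Z$. One inclusion is automatic: $\langle x,y\rangle/Z(G)$ is generated by $\bar x$ of order $[j]$ and $\bar y$ of infinite order (for generic $y$), so it is a quotient of $\Z_{[j]}\ast\Z$; the relation $w=[x^m,y^n]=1$ holds because $x^m=\pm1$ kills the commutator. For the reverse inclusion I would invoke a density/genericity argument in the spirit of Proposition \ref{pr51}: the set of pairs $(x,y)\in C_j\times G$ for which $\langle\bar x,\bar y\rangle$ is a \emph{proper} quotient of $\Z_{[j]}\ast\Z$ is contained in a countable union of proper closed subsets of $C_j\times G$ (for each nontrivial element $\omega$ of $\Z_{[j]}\ast\Z$ one gets the closed condition that $\omega(\bar x,\bar y)=1$), and one must check that each such subset is indeed proper, i.e. that there exists at least one pair $(x,y)$ in $C_j\times G$ with $\langle\bar x,\bar y\rangle\cong\Z_{[j]}\ast\Z$. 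The existence of such a faithful pair is classical: a cyclic group of finite order and a cyclic group of infinite order generically generate their free product inside $\PGL_2(\C)$ (one can exhibit an explicit such pair, e.g. using a ping-pong argument on $\P^1(\C)$, or cite the well-known fact that two ``sufficiently generic'' elements of $\PGL_2(\C)$ generate the free product of the groups they generate individually). Since $K=\C$ has infinite transcendence degree over $\Q$, the quasi-open set $U^j$ of Proposition \ref{pr51} is nonempty and dense, and on it $\langle x,y\rangle/Z(G)\cong\Z_{[j]}\ast\Z$.

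The main obstacle is this last existence statement — producing a single pair $(x,y)\in C_j\times G$ whose images in $\PGL_2(\C)$ generate $\Z_{[j]}\ast\Z$ freely — together with making sure the relevant ``bad'' loci are genuinely proper closed (equivalently, that no nontrivial word of $\Z_{[j]}\ast\Z$ vanishes identically on $C_j\times G$). The cleanest route is a ping-pong argument: fix $x$ to be a diagonal element of order $[j]$ in $\PGL_2$, which acts on $\P^1(\C)$ fixing $0$ and $\infty$ and rotating a small annulus around each; then choose $y$ loxodromic with attracting/repelling fixed points placed so that suitable powers of $y$ map the complement of neighbourhoods of $0,\infty$ into a small disc disjoint from them. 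Standard ping-pong then gives that $\langle \bar x,\bar y\rangle=\langle\bar x\rangle\ast\langle\bar y\rangle=\Z_{[j]}\ast\Z$. Lifting back, the pair $(x,y)$ lies in $C_j\times G$ (up to replacing $x$ by a conjugate, which does not leave $C_j$), establishing nonemptiness of $U^j$; alternatively one simply cites the well-documented fact that a finite-order and an infinite-order element of $\PGL_2(\C)$ in ``general position'' generate their free product. Combining this with the genericity argument of the previous paragraph completes the proof, and the $G\times C_j$ case is identical with the roles of $x$ and $y$ interchanged, yielding $\tilde{\Gamma}_w^j=\Z\ast\Z_{[j]}\bmod Z(G)$.
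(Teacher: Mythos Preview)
Your argument is correct and follows the same route as the paper: identify $\W_w^j$ as $C_j\times G$ or $G\times C_j$ via Proposition~\ref{pr39}, then invoke the fact that a generic pair drawn from such a product generates, modulo $Z(G)$, the free product of the two cyclic groups involved. The paper's proof is a single sentence citing \cite{G2} for this last fact (stated there for any two non-central conjugacy classes $C_1,C_2\subset G$), whereas you unpack it into a self-contained argument via Proposition~\ref{pr51} together with the existence of one faithful pair.

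One minor point worth tightening: your ping-pong sketch is not quite right as written, since an elliptic element $\bar x$ of finite order has no attracting/repelling dynamics on $\P^1$, so the naive two-domain ping-pong you describe does not apply directly. A correct setup takes, say, a small disc $D$ around the attracting fixed point of a strongly loxodromic $\bar y$, chosen so that the $[j]$ rotates $\bar x^k D$ are pairwise disjoint; then with $A=\bigcup_{k\ne 0}\bar x^k D$ and $B=D$ one checks that nontrivial powers of $\bar x$ send $B$ into $A$ and all powers of $\bar y$ send $A$ into $B$. Your stated fallback---citing the classical embedding of $\Z_n\ast\Z$ into $\PGL_2(\C)$---is entirely adequate and is precisely what \cite{G2} provides in greater generality.
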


\begin{proof}
For any two non-central conjugacy classes $C_1, C_2 \subset G$, the
generic group $\langle g_1, g_2\,\,\mid\,\, g_1 \in C_1, g_2 \in
C_2\rangle$ is isomorphic to $\langle g_1\rangle \ast \langle g_2
\rangle \mod Z(G)$, see \cite{G2}. Hence the statement follows from
Proposition \ref{pr39}.
\end{proof}

\begin{example} \label{ex3}
$\bf w = [x,y]^2. $
\end{example}

\bigskip

Let
$$t = \begin{pmatrix} \la &0\cr
0&\la^{-1}\cr \end{pmatrix}, g =  \begin{pmatrix} \al&\be\cr \gam&\de\cr \end{pmatrix}\in G.$$
We have
\begin{equation}
[t, g] = \begin{pmatrix}\al\de-\be\gam \la^2& &\al\be(-1+\la^2)\cr
&&\cr
 \gam\de(-1+\la^{-2})& &\al\de - \be\gam\la^{-2}\cr
\end{pmatrix}.\label{equa135}
\end{equation}
Hence
$$\tr ([t, g])  = 2 - \be\gam( \la
-\la^{-1})^2 .
$$
Thus,
\begin{equation}
\tr ([t, g]) = a
\Leftrightarrow 2 -\be\gam( \la
-\la^{-1})^2 = a \Leftrightarrow  \be\gam = \frac{2-a}{( \la
-\la^{-1})^2}: = p_{a, \la}.\label{equa131}
\end{equation}
Since $\det g = 1$, we have
\begin{equation}
\be\gam = \frac{2-a}{( \la
-\la^{-1})^2}\Leftrightarrow \al\de = 1 + \frac{2-a}{( \la
+\la^{-1})^2} =   \frac{ \la^2
+\la^{-2} -a}{(\la
-\la^{-1})^2} := q_{a, \la}. \label{equa132}
\end{equation}
Put
\begin{equation}
T_a:=\{t = \begin{pmatrix} \la &0\cr
0&\la^{-1}\cr \end{pmatrix}, \,\,\,\la \ne \pm 1, \la^2
+\la^{-2} -a \ne 0\}.\label{equa127}
\end{equation}
If $t \in T_a$, $a\ne 2$, then
 \begin{equation}
 M^a_t = \left\{\begin{pmatrix} \al&\be\cr \gam&\de\cr
\end{pmatrix}\in G\,\,\,\mid\,\,\,\be\gam = p_{a, \la} \right\}=
\left\{\begin{pmatrix} \al&\be\cr \gam&\de\cr
\end{pmatrix}\,\,\,\mid\,\,\,\be\gam = p_{a, \la}, \al\de = q_{a, \la} \right\}\,\,\,\label{equa130}
\end{equation}
is an irreducible closed subset in $G\times G$ and $\dim M^a_t = 2$.
The construction of $M^a_t$ implies that
\begin{equation} \label{equa21}
M_T^a:  = \{(t, M^a_t) \mid t\in T_a\} = \{(t, g)\,\,\mid\,\,\,t\in T_a, g\in G, \tr ( [t,g])  = a\}.
\end{equation}
Also, the set $M^a_T$  is an irreducible locally closed subset of $G\times G$,
and $\dim M_T^a = 3$.  Now let $\Psi\colon  M_T^a\times G\rightarrow G\times G$ be defined by $\Psi (t,g,y) = (yty^{-1}, ygy^{-1})$. Since $M^a_T$  is an irreducible locally closed subset of $G\times G$ and $G$ is an affine variety, the closure of the image of $\Psi$ is an irreducible closed subset of $G$. Thus, the set
\begin{equation}
S_a : = \overline{\Imm\, \Psi} = \overline{\{g M_T^a g^{-1}\,\,\mid\,\,g \in G\}}  = \overline{ \{(gtg^{-1}, gM^a_tg^{-1}) \mid t\in T_a, g\in G\} }\label{equa22}
\end{equation}
is an irreducible closed subset of $G\times G$. Further,  the projection $p\colon S_a \rightarrow G$ onto the first component of $G\times G$ is dominant
because the image is invariant under conjugation and contains  every $t \in T_a$. The fibre $p^{-1}(t)$ is equal to $M_t^a$ and therefore is of  dimension $2$. Hence
\begin{equation}
\dim S_a = 5.\label{equa23}
\end{equation}

Note that we also have \eqref{equa23} and the irreducibility of $S_2$ by Proposition  \ref{pr37}.

We need the following irreducibility statement, which
is probably known to experts. Following the referee's suggestion, we provide
a self-contained proof.

\begin{lemma}
\label{lem44}
Let $a\in \mathbb C$. Then the set $\{(g_1, g_2) \,\,\,\mid\,\,\,\tr ([g_1, g_2]) = a\}$ is irreducible and
$$ \{(g_1, g_2) \,\,\,\mid\,\,\,\tr ([g_1, g_2]) = a\} = S_a.$$

\end{lemma}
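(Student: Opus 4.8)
The plan is to show that the map $\Psi\colon M_T^a \times G \to G \times G$, whose image closure is $S_a$ by definition \eqref{equa22}, is in fact dominant onto the whole variety $\mathcal T := \{(g_1,g_2) \mid \tr([g_1,g_2]) = a\}$, and then to deduce irreducibility of $\mathcal T$ from that of $S_a$. Since $S_a$ is irreducible (already established via the projection $p\colon S_a \to G$), and $S_a \subseteq \mathcal T$ because $\tr([t,g]) = a$ for $(t,g) \in M_T^a$ and this is conjugation-invariant, it suffices to prove the reverse inclusion $\mathcal T \subseteq S_a$, i.e.\ that a generic point of $\mathcal T$ lies in $S_a$. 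The case $a = 2$ is already handled by Proposition~\ref{pr37} (there $\mathcal T = \T_w$ for $w = [x,y]$), so I would assume $a \ne 2$ throughout.

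First I would reduce to a normal form for the first coordinate $g_1$. Up to conjugation, any $g_1 \in G = \SL_2(\C)$ is either diagonalizable (conjugate into $T$), or $\pm$-unipotent (conjugate into $\pm U$). I claim that on the locus where $g_1$ is $\pm$-unipotent, the condition $\tr([g_1,g_2]) = a$ cuts out something of dimension $\le 4 < 5 = \dim S_a$: indeed, if $g_1 = \pm u$ with $u \in U$, a direct computation analogous to \eqref{equa135} shows $\tr([g_1,g_2])$ depends on $g_2$ only through a single nonconstant function (the analogue of $\be\gam$), so each fibre in $g_2$ has dimension $2$, and together with the $1$-dimensional choice of $u$ inside the $2$-dimensional unipotent-up-to-sign locus, plus conjugation, one stays in dimension $\le 4$; hence this locus is contained in the closure of the diagonalizable locus inside any irreducible component of $\mathcal T$ of dimension $5$. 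So a generic point of any component of $\mathcal T$ has $g_1$ conjugate to some $t \in T$ with $t \ne \pm 1$.

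Next, for such a generic point, after conjugating we may assume $g_1 = t \in T$, $t \ne \pm 1$, and $\tr([t,g_2]) = a$. By \eqref{equa131} this forces $\be\gam = p_{a,\la}$ where $g_2 = \begin{pmatrix}\al&\be\cr\gam&\de\end{pmatrix}$, i.e.\ $g_2 \in M_t^a$ — provided we also know $t \in T_a$, i.e.\ $\la^2 + \la^{-2} - a \ne 0$. The locus $\la^2 + \la^{-2} = a$ is a proper closed condition on $t$, so it is avoided generically unless the whole component of $\mathcal T$ forces it; I would rule that out by a dimension count: the locus $\{\la^2+\la^{-2}=a\} \times \{g_2 : \tr([t,g_2])=a\}$ together with conjugation has dimension $\le 0 + 3 + 2 = 5$ but one checks it is not all of a $5$-dimensional component — alternatively, observe directly that $S_a$ already meets every fibre over $t$ with $t \in T_a$ in dimension exactly $2$, and the complement is lower-dimensional. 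Granting this, a generic point of $\mathcal T$ is $G$-conjugate to a point $(t,g_2)$ with $t \in T_a$, $g_2 \in M_t^a$, hence lies in $\{g M_T^a g^{-1} \mid g \in G\} \subseteq S_a$. Therefore $\mathcal T \subseteq \overline{S_a} = S_a$, giving $\mathcal T = S_a$ and, since $S_a$ is irreducible, $\mathcal T$ is irreducible.

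The main obstacle is the bookkeeping of which loci are genuinely lower-dimensional: one must be careful that the $\pm$-unipotent locus for $g_1$ and the exceptional torus locus $\{\la^2+\la^{-2}=a\}$ do not secretly carry a full $5$-dimensional component of $\mathcal T$ that fails to meet $S_a$. The cleanest way to close this gap is to note that $\mathcal T$ is a hypersurface in the $6$-dimensional $G \times G$ (defined by one equation $\tr([g_1,g_2]) = a$), so \emph{every} irreducible component of $\mathcal T$ has dimension exactly $5$; then it is enough to exhibit, for each component, a single point lying in $S_a$ whose $g_1$ is regular semisimple, which the normal-form argument supplies once one checks that the regular-semisimple-$g_1$ locus is dense in $\mathcal T$ — and that density is exactly the dimension count sketched above. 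I expect the unipotent-$g_1$ estimate to require the small explicit computation of $\tr([{\pm}u, g])$, but nothing deeper.
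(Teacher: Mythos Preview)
Your overall strategy matches the paper's: both establish that $S_a$ is a $5$-dimensional irreducible closed subset of the hypersurface $\mathcal T$, and then show every irreducible component of $\mathcal T$ (necessarily $5$-dimensional) is contained in $S_a$ by analyzing the projection to the first coordinate. The difference lies only in how the ``bad'' first coordinates are ruled out.

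The paper argues via a dichotomy that avoids all explicit dimension counts: for a hypothetical component $S_a^1 \ne S_a$, the projection $p_1\colon S_a^1 \to G$ is either dominant or (by conjugation-invariance and irreducibility) has image contained in a single conjugacy-class closure $\overline C$. In the latter case $\dim S_a^1 = 5$ forces $S_a^1 = \overline C \times G$ with $C \ne \{\pm 1\}$; but taking $g_2 = 1$ gives $\tr[g_1,g_2] = 2$ while a generic $g_2$ gives a different trace, contradicting $S_a^1 \subseteq \mathcal T$. Once $p_1$ is dominant, an open subset of $S_a^1$ has first coordinate conjugate to some $t \in T_a$, hence lies in $S_a$, and one concludes $S_a^1 = S_a$.

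Your route via explicit dimension bounds is also valid, but the bookkeeping is off in places. The count ``$0+3+2=5$'' for the exceptional-torus locus is wrong: the $G$-saturation of $\{t\}\times M_t^a$ for a fixed bad $t$ has dimension $\dim C_t + \dim M_t^a = 2+2=4$ (conjugation by the centralizer $T$ preserves $M_t^a$), so this locus is genuinely lower-dimensional and needs no further check. Likewise the $\pm$-unipotent-$g_1$ locus has dimension $\dim C_{\pm u} + 2 = 4$ (a direct computation gives $\tr([u,g_2]) = 2 + \gamma^2$). Finally, exhibiting ``a single point lying in $S_a$'' does not place a whole component inside $S_a$; what you actually use is that the locus where $g_1$ is conjugate into $T_a$ is open and dense in each component, so that component is contained in $\overline{S_a}=S_a$. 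With these corrections your argument goes through; the paper's dichotomy simply sidesteps them.
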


\begin{proof} The irreducible closed subset $S_a$  is contained in $ \{(g_1, g_2) \,\,\,\mid\,\,\,\tr ([g_1, g_2]) = a\}$ (see \eqref{equa21}, \eqref{equa22}). Equality \eqref{equa23} implies that $S_a$ is an irreducible component of  the set $ \{(g_1, g_2) \,\,\,\mid\,\,\,\tr ([g_1, g_2]) = a\}$.
Suppose that there exists an irreducible component $S_a^1 \ne S_a$ of  the set $ \{(g_1, g_2) \,\,\,\mid\,\,\,\tr ([g_1, g_2]) = a\}$.
 Since the set $ \{(g_1, g_2) \,\,\,\mid\,\,\,\tr ([g_1, g_2]) = a\}$ is a hypersurface in $\SL_2(\C)\times \SL_2(\C)$, all its irreducible components are of dimension $5$. Thus,
\begin{equation}
\dim S_a^1 = 5.\label{equa24}
\end{equation}
Let $p_1\colon  S_a^1\rightarrow G$ be the projection onto the first component of $G\times G$. Since the set $S_a^1$ is invariant under conjugation by elements of $G$ and it is an irreducible closed  subset of $G \times G$, the map $p_1$ is either dominant or its image is contained  in a single conjugacy class $C$. In the latter case
we have $S_a^1 = C\times G$  and $C \ne \pm 1$ (this follows from \eqref{equa24}). However, one can find pairs $(g_1, g_2), (g_3, g_4)\in C\times G$ such that
$[g_1, g_2] = 1, [g_3, g_4] \ne 1$ and therefore $\tr([g_1, g_2]) = 2, \tr([g_3, g_4]) \ne 2$. Thus, the set $S_a^1$ cannot be of the form $ C\times G$. Hence the map
$p_1$ is dominant and there exists an open subset $T_a^1\subset T_a$ such that $T_a^1\subset \Imm p_1$. Now for every $t\in T_a^1$ we have
$$S_a^1 \supset \{gp^{-1}_1(t)g^{-1}\,\,\,\mid\,\, t\in T^1_a, g \in G\}\subset  \{g M_T^a g^{-1}\,\,\mid\,\,g \in G\}\subset S_a.$$
Since $T_a^1$ is an open subset of the torus $T$, the set $X = \{gp^{-1}_1(t)g^{-1}\,\,\,\mid\,\, t\in T^1_a, g \in G\}$ contains an open subset of the component $S_a^1$. But $X$ is also a subset of $S_a^1$.
Thus, $S_a = S_a^1$.
\end{proof}

\bigskip

Now we consider our case $w = [x, y]^2$.

Since the condition $\tr([x, y]^2 ) = 2$  implies that $[x, y] = \pm u$ where $u$ is a unipotent element, we have
$\T_w = S_{2}\cup S_{-2}$. The sets $S_{\pm 2}$ are irreducible of dimension $5$. Thus,  $S_2 = \T_w^0, S_{-2} = \T_w^1$. Note that the set $\T_w^0$
is the variety $\T_{[x, y]}$ considered in Example \ref{ex1}. Now consider the set  $\T_w^1$. The definition $T_a$ (see \eqref{equa127}) implies that
$$T_{-2} = \left\{t = \begin{pmatrix} \la &0\cr
0&\la^{-1}\cr \end{pmatrix}, \,\,\,\la \ne \pm 1, \la^2
+\la^{-2} +2 \ne 0\right\} = \left\{t = \begin{pmatrix} \la &0\cr
0&\la^{-1}\cr \end{pmatrix}, \,\,\,\la \ne \pm 1, \pm i\right\} .$$
Then \eqref{equa22} implies
$$\T_w^1 =   \overline{ \{(gtg^{-1}, gM^{-2}_tg^{-1})\,\, \mid \,\,t\in T,\,t^4\ne 1, g\in G\} }$$
where $M_t^{-2}$ is defined by \eqref{equa130}. Let $t_0 = \begin{pmatrix}i&0\cr 0&-i\end{pmatrix}$, it is an element of order $4$. Then we can also define $M_{t_0}^{-2}$ by formula \eqref{equa130} where $p_{a, \la} = -1$ and $q_{a, \la} = 0$ (see \eqref{equa131}, \eqref{equa132}), namely,
$$M_{t_0}^{-2} = \left\{\begin{pmatrix} \al&\be\cr \gam&\de\cr
\end{pmatrix}\,\,\,\mid\,\,\,\be\gam = -1, \al\de = 0\right\}.$$
Definitions \eqref{equa131}, \eqref{equa132}, \eqref{equa130} show that $M_{t_0}^{-2} = \{g \in G\,\,\mid\,\,\tr([t, g]) = -2\}$. Hence the set $(t_0, M_{t_0}^{-2})$ is a subset of $S_{-2}$ and therefore we can rewrite the formula for $\T_w^1$:
$$\T_w^1 = \overline{\{gt g^{-1}\times gM^{-2}_t g^{-1}\,\,\,\mid\,\,\,t \in T, \,t^2\ne 1, \, g \in G\}}.$$

\begin{prop}

Each of the sets $\T_w$ and $\W_w$ has two irreducible components:
$$\W_w^0 = \W_{[x, y]} \subset \T_w^0 = \T_{[x, y]},\,\,\,\W_w^1 \subset \T_w^1,$$
where
$$\W_w^1 = \left\{g \left(\begin{pmatrix}i&0\cr 0&-i\cr \end{pmatrix},
\begin{pmatrix}0&\mu\cr -\mu^{-1}&0\cr \end{pmatrix}\right)  g^{-1}\,\,\,\mid\,\,\, \mu\in\C^*,\,\,\, g \in G\right\},\,\,\,\dim \W_w^1 = 3.$$
The generic group $\tilde{\Gamma}_w^1$ of $\W_w^1$ is the quaternion group $Q_8$.
\end{prop}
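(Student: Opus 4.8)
The plan is to translate the equation $[x,y]^{2}=1$ into group theory and read off everything from there. In $G=\SL_2(\mathbb C)$ an element $M$ with $M^{2}=I$ has minimal polynomial dividing $(t-1)(t+1)$ and determinant $1$, hence $M=\pm I$. Therefore $\mathcal W_w=\widetilde w^{-1}(1)$ is the set-theoretic union of $A_+:=\{(x,y):[x,y]=I\}$, which is the commuting variety $\mathcal W_{[x,y]}$ analysed in Proposition~\ref{pr37}, and $A_-:=\{(x,y):[x,y]=-I\}=\{(x,y):xy=-yx\}$.

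Next I would pin down $A_-$ exactly. The relation $xy=-yx$ already forces $x,y\notin Z(G)$, and a short case analysis on the remaining conjugacy type of $x$ (a $\pm$-unipotent, or a regular semisimple element) shows that the unipotent case forces $y=0$, which is impossible, while in the regular semisimple case, writing $x=\operatorname{diag}(\lambda,\lambda^{-1})$, the equation forces $\lambda^{2}=-1$, so $x^{2}=-I$, and $y$ is anti-diagonal in an eigenbasis of $x$; the symmetric argument gives $y^{2}=-I$. Conjugating $x$ into the fixed torus, this says precisely that
$$A_-=\left\{g\left(\begin{pmatrix}i&0\\0&-i\end{pmatrix},\ \begin{pmatrix}0&\mu\\-\mu^{-1}&0\end{pmatrix}\right)g^{-1}\ :\ g\in G,\ \mu\in\mathbb C^{*}\right\}.$$
This set is Zariski closed, being cut out in $G\times G$ by the vanishing of the entries of $xy+yx$, and it is the image of the irreducible variety $G\times\mathbb C^{*}$, hence irreducible. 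For the dimension I would fibre $A_-$ over the conjugacy class $C=\{x:x^{2}=-I\}$: every such $x$ is conjugate to $\operatorname{diag}(i,-i)$, so $C$ is a single class with $\dim C=\dim G-1=2$, and the fibre over $x$, in an eigenbasis of $x$, is the one-parameter family of anti-diagonal elements of $\SL_2(\mathbb C)$, whence $\dim A_-=3$. I set $\mathcal W_w^{1}:=A_-$.

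Now I would assemble the statement. We have $\mathcal W_w=\mathcal W_w^{0}\cup\mathcal W_w^{1}$ with $\mathcal W_w^{0}:=\mathcal W_{[x,y]}$ irreducible of dimension $4$ (Proposition~\ref{pr37}) and $\mathcal W_w^{1}$ irreducible, closed, of dimension $3$; since the dimensions differ, $\mathcal W_w^{0}\not\subset\mathcal W_w^{1}$, and one non-commuting anticommuting pair, e.g. $\bigl(\operatorname{diag}(i,-i),\bigl(\begin{smallmatrix}0&1\\-1&0\end{smallmatrix}\bigr)\bigr)$, shows $\mathcal W_w^{1}\not\subset\mathcal W_w^{0}$, so these are exactly the two irreducible components of $\mathcal W_w$, with $\mathcal W_w^{1}$ as displayed and $\dim\mathcal W_w^{1}=3$. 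The inclusions are immediate: commuting pairs have $\tr([x,y])=2$, so $\mathcal W_w^{0}\subset S_2=\mathcal T_w^{0}$, while pairs in $\mathcal W_w^{1}$ have $\tr([x,y])=\tr(-I)=-2$, so $\mathcal W_w^{1}\subset S_{-2}=\mathcal T_w^{1}$ by Lemma~\ref{lem44}; the two-component statement for $\mathcal T_w$ was recorded just before the proposition.

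It remains to identify the generic group. For every $(x,y)\in\mathcal W_w^{1}$ we have $x^{2}=y^{2}=-I$ and $xy=-yx$, hence $x^{-1}=-x$, $y^{-1}=-y$, $yxy^{-1}=x^{-1}$, and $\langle x,y\rangle$ is a quotient of $Q_8$; on the other hand the eight elements $\pm I,\pm x,\pm y,\pm xy$ are pairwise distinct, since every possible coincidence would give $I=-I$ or contradict one of $xy=-yx$ and $x,y\notin Z(G)$. Hence $\langle x,y\rangle\cong Q_8$ at every point of $\mathcal W_w^{1}$, so in particular $\tilde{\Gamma}_w^{1}=Q_8$ and the genericity machinery of Proposition~\ref{pr51} is not even needed. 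I expect the only genuinely delicate step to be the second paragraph — the case analysis must be run carefully enough to be sure that the central and unipotent types of $x$ are really impossible and that the surviving locus is captured on the nose rather than merely up to Zariski closure; after that, the component count, the inclusions, and the finite-group computation are all routine.
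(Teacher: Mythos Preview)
Your proof is correct and follows essentially the same approach as the paper: split $\mathcal W_w$ according to whether $[x,y]=I$ or $[x,y]=-I$, rule out the central and $\pm$-unipotent cases for $x$ in the second piece, and deduce from the anticommutation relation that $(x,y)$ is conjugate to a pair of the displayed form. Your execution is in places a bit cleaner than the paper's---working directly with $xy=-yx$ rather than via the commutator formula~(\ref{equa135}), and exhibiting $A_-$ explicitly as the zero locus of $xy+yx$ to get closedness---but the overall strategy and the dimension count are the same.
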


\begin{proof}
We have only one component $\W_w^0  = \W_{[x, y]}\subset \T_w^0 = \T_w$ (see Example \ref{ex1}).

Consider a pair $(g_1, g_2) \in \T_w^1$ such that
$ [g_1, g_2] =- 1$. Then $g_1 \ne \pm u$ where $u$ is a unipotent element. Indeed, if $g_1 =\pm u$ we may assume
$1\ne u \in U$ and $g_2  \in \dot w_0 U$. Let $g_2 = \dot w_0 u^\prime, v\in U$. Then $$[g_1, g_2] = (\pm u)\dot w_0 u^\prime (\pm u^{-1})u^{\prime-1}\dot w_0^{-1} = u \underbrace{\dot w_0(u^\prime u^{-1}u^{\prime-1})\dot w_0^{-1}}_{= v \in U^-, v\ne 1} = uv \ne \pm 1.$$


Thus, we may assume
$$ g_1 = \begin{pmatrix}\la&0\cr 0&\la^{-1}\cr \end{pmatrix},\,\,\,g_2 =
\begin{pmatrix} \al&\be\cr \gam&\de\cr \end{pmatrix}$$
where $\la \ne \pm 1$. Then formula \eqref{equa135} shows that the
equality $[g_1, g_2] = -1$ is possible if and only if
\begin{equation}
g_1 =  \pm \begin{pmatrix}i&0\cr 0&-i\cr \end{pmatrix},\,\,\,g_2 =
\begin{pmatrix} 0&\mu\cr -\mu^{-1}&0\cr \end{pmatrix}.\label{equa136}
\end{equation}
In these cases $\langle g_1, g_2\rangle = Q_8.$  Since every pair $(g_1, g_2)$  with the property $[g_1, g_2] = -1$  is conjugate to a pair of the form \eqref{equa136}, we have only one irreducible component
\begin{equation}
\W_w^1 = \left\{g \left(\begin{pmatrix}i&0\cr 0&-i\cr \end{pmatrix},
\begin{pmatrix} 0&\mu\cr -\mu^{-1}&0\cr \end{pmatrix}\right)g^{-1}\,\,\,\mid\,\,\,g \in G\right\}.\label{equa137}
\end{equation}
If $C_{t_0}$ is the conjugacy class of $t_0$ then $\W_w^1$ is the hypersurface in $C_{t_0}\times C_{t_0}$ given by the equation $[x, y] = -1$.
Hence $\dim \W_w^1 = 2\dim C_{t_0} - 1 = 3$.
Also equality \eqref{equa137} shows
$$(g_1, g_2) \in W_w^1 \Leftrightarrow \langle g_1, g_2\rangle =
Q_8.$$
\end{proof}

\begin{example} \label{ex4}
$\bf w = [ x,y]^p,\,\,\, p\ne2  $, where $p$ is a prime number.
\end{example}

\begin{prop}
There are $1 + \frac{p-1}{2}$ irreducible components $\T_w^j$ and
the same number of components $\W_w^j\subset \T_w^j$. Moreover,
$$ \W_w^0 = \W_{[x,y]} \subset \T_{[x,y]} = \T_w^0$$
and for $j = 1, 2, \dots ,\frac{p-1}{2}$ we have
$$\W_w^j = \T_w^j = \{(x, y) \,\,\,\,\mid\,\,\,\tr ([x, y]) = 2\cos \frac{{2}j\pi}{p}\}.$$
\end{prop}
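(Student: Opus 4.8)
The plan is to reduce the whole statement to the trace of the commutator $[x,y]$, where Lemma~\ref{lem44} and Example~\ref{ex1} already supply all the geometry. The starting point would be two elementary facts about a matrix $A=[x,y]\in\SL_2(\C)$ with eigenvalues $\mu,\mu^{-1}$: that $\tr(A^p)=\mu^p+\mu^{-p}=2$ if and only if $(\mu^p-1)^2=0$, i.e. $\mu^p=1$; and that $A^p=1$ if and only if $A$ is semisimple and $\mu^p=1$. For the second equivalence I would use that a non-semisimple element of $\SL_2(\C)$ is $\pm u$ with $u\ne1$ unipotent, so $A^p=\pm u^p\ne\pm1$ because $u^p\ne1$ in characteristic zero, while conversely $T^p-1$ is separable over $\C$.

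Next I would introduce the numbers $a_j=2\cos\frac{2j\pi}{p}=\zeta^j+\zeta^{-j}$ with $\zeta=e^{2\pi i/p}$, for $j=0,1,\dots,\frac{p-1}{2}$; these are exactly the $1+\frac{p-1}{2}$ pairwise distinct values of $\mu+\mu^{-1}$ as $\mu$ runs over the $p$-th roots of unity, with $a_0=2$ and, because $p$ is odd, $a_j\ne-2$ for every $j$. The first fact then yields
\[
\T_w=\{(x,y)\mid\tr([x,y]^p)=2\}=\bigcup_{j=0}^{(p-1)/2}\{(x,y)\mid\tr([x,y])=a_j\}=\bigcup_{j=0}^{(p-1)/2}S_{a_j},
\]
where each $S_{a_j}$ is irreducible of dimension $5$ by Lemma~\ref{lem44} and \eqref{equa23} (and $S_{a_0}=\T_{[x,y]}$). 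Since the $a_j$ are distinct, these irreducible closed sets of equal dimension are pairwise non-contained, hence are precisely the irreducible components of $\T_w$; so I would set $\T_w^0=\T_{[x,y]}$ and $\T_w^j=S_{a_j}$ for $j\ge1$.

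Then I would treat $\W_w$ with the second fact, which gives $\W_w=\{(x,y)\mid[x,y]=1\}\cup\{(x,y)\mid[x,y]\text{ semisimple},\ \mu^p=1,\ \mu\ne1\}$; the first piece is the commuting variety $\W_{[x,y]}$, irreducible of dimension $4$ by Example~\ref{ex1}. A pair belongs to the second piece exactly when $\tr([x,y])=a_j$ for some $j\in\{1,\dots,\frac{p-1}{2}\}$, and here the hypothesis $p\ne2$ reenters: since then $a_j\ne\pm2$, the condition $\tr([x,y])=a_j$ already forces $[x,y]$ to have distinct eigenvalues, hence to be semisimple with $[x,y]^p=1$. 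So the second piece equals $\bigcup_{j=1}^{(p-1)/2}S_{a_j}$, and $\W_w=\W_{[x,y]}\cup\bigcup_{j=1}^{(p-1)/2}S_{a_j}$. A dimension comparison ($\dim S_{a_j}=5>4=\dim\W_{[x,y]}$) together with the observation that the generic point of $\W_{[x,y]}$ has commutator-trace $2\ne a_j$ shows that these $1+\frac{p-1}{2}$ sets are exactly the irreducible components of $\W_w$. Finally $\W_w^0=\W_{[x,y]}\subset\T_{[x,y]}=\T_w^0$ (since $[x,y]=1$ forces $\tr[x,y]=2$), while $\W_w^j=S_{a_j}=\T_w^j$ for $j\ge1$, which is the asserted description.

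I expect the only non-routine point to be locating where the hypothesis $p\ne2$ is used, and it is used in precisely one place: $-2\notin\{a_j\}$. If $-2$ occurred among the $a_j$ (as it does for $p=2$), then on the locus $\tr([x,y])=-2$ there would be pairs with $[x,y]=-u$ or $[x,y]=-1$, for which $[x,y]^p\ne1$, so the corresponding component of $\W_w$ would fail to fill $\T_w^j$ and would instead collapse to the small $Q_8$-type set of Example~\ref{ex3}. Thus the crux is simply to keep the semisimple versus non-semisimple dichotomy straight at the exceptional traces $\pm2$; everything else is immediate from Lemma~\ref{lem44} and Example~\ref{ex1}.
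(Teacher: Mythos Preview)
Your proposal is correct and follows essentially the same approach as the paper: both reduce the description of $\T_w$ and $\W_w$ to the trace of $[x,y]$, invoke Lemma~\ref{lem44} for the irreducibility of the level sets $S_{a_j}$, and use that for $j\ge 1$ the value $a_j\ne\pm 2$ forces $[x,y]$ to be semisimple of order $p$, whence $\W_w^j=\T_w^j$. Your write-up is somewhat more explicit than the paper's (in particular you spell out where the hypothesis $p\ne 2$ is used and why the listed pieces are exactly the irreducible components), but there is no genuine difference in strategy.
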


\begin{proof}
Obviously, we have
$$ \W_w^0 = \W_{[x,y]} \subset \T_{[x,y]} = \T_w^0.$$
Let now $\T_w^j\ne \T_w^0$. Then
$$(x, y) \in \T_w^j \setminus \T_w^0\Leftrightarrow \begin{cases} \tr ([x, y]) \ne 2 \\ \text{and}\\ [x, y]^p = 1.\end{cases}
\Leftrightarrow [x, y] \backsim\begin{pmatrix}\epsilon_p&0\cr
0&\epsilon_p^{-1}\cr \end{pmatrix}\,\,\,\text{where}\,\,\epsilon_p =
\sqrt[p]{1}\ne 1.$$ The condition $[x, y]
\backsim\begin{pmatrix}\epsilon_p&0\cr 0&\epsilon_p^{-1}\cr
\end{pmatrix}$ is equivalent to $\tr([x, y]) = 2\cos \frac{2j\pi}{p}$
for some $j = 1, \dots, \frac{p-1}{2}$.
Lemma  \ref{lem44} shows that there are exactly $\frac{p-1}{2}$ irreducible components $\T_w^j = \{(x, y)\,\,\,\mid\,\,\,\tr([x,
y]) = 2\cos \frac{2j\pi}{p}\} = S_{2\cos \frac{2j\pi}{p}}$ apart from the component $\T_w^0$. Also, for $j>0$ we have
$$\W_w^j =\{(x, y) \in \T_w^j\,\,\,\mid\,\,\,[x, y]^p = 1\} = \T_w^j.$$
\end{proof}

\begin{remark}
The same arguments as in the case $[x, y]^2 = 1$ show that for $j>0$
we have
$$\T_w^j=\overline{\{gt g^{-1}\times gM^{2\cos \frac{2j\pi}{p}}_t g^{-1}\,\,\,\mid\,\,\,t \in T, t^2\ne 1,  g \in G\}}$$
where  $$M^{2\cos \frac{2j\pi}{p}}_t = \left\{\begin{pmatrix} \al&\be\cr \gam&\de\cr \end{pmatrix}\in G\,\,\,\mid\,\,\,\be\gam =  \frac{2(1 - \cos\frac{2\pi j}{p})}{(\la-\la^{-1})^2}\right\}.$$
\end{remark}

\begin{prop}
For $j>0$ all $\tilde{\Gamma}_w^j$ are non-solvable infinite groups
isomorphic to each other.
\end{prop}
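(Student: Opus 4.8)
The plan is to read off the generic group $\tilde{\Gamma}_w^j$ of the component $\W_w^j$ from the explicit description obtained above. Recall that for $1\le j\le\frac{p-1}{2}$ we have shown $\W_w^j=\T_w^j=\{(g_1,g_2)\in G\times G\mid \tr([g_1,g_2])=c_j\}=S_{c_j}$, an irreducible variety of dimension $5$, where $c_j=2\cos\frac{2\pi j}{p}$. Since $p$ is odd, $c_j\ne\pm2$, so for a generic pair $(g_1,g_2)\in\W_w^j$ the commutator $[g_1,g_2]$ is a regular semisimple element whose eigenvalues $\zeta,\zeta^{-1}$ are primitive $p$-th roots of unity, hence of order exactly $p$. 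I would prove the statement in two steps: first, that for a generic $(g_1,g_2)\in\W_w^j$ the subgroup $\langle g_1,g_2\rangle\le G$ is Zariski dense, which forces $\tilde{\Gamma}_w^j$ to be non-solvable and infinite; and second, that a suitable field automorphism of $\C$ identifies the components $\W_w^1,\dots,\W_w^{(p-1)/2}$, which forces the $\tilde{\Gamma}_w^j$ to be pairwise isomorphic.

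For the first step, suppose $(g_1,g_2)\in\W_w^j$ with $H=\langle g_1,g_2\rangle$ not Zariski dense in $\SL_2(\C)$. Then $\overline H$ is a proper closed subgroup of $\SL_2(\C)$, so up to conjugacy $H$ lies in a Borel subgroup $B$, or in $N_G(T)$, or has image in $\PGL_2(\C)$ equal to one of $A_4$, $S_4$, $A_5$. The first case is impossible on $\W_w^j$: if $g_1,g_2$ lie in a common Borel then $[g_1,g_2]$ is unipotent, whence $\tr([g_1,g_2])=2\ne c_j$. The pairs with $H$ conjugate into $N_G(T)$ form a subset of $G\times G$ of dimension at most $4$ (it is the image of $G\times N_G(T)\times N_G(T)$ under simultaneous conjugation, and all fibres of that map have dimension $\ge1$), while the pairs projecting onto $A_4$, $S_4$ or $A_5$ form a finite union of conjugation orbits, of dimension at most $3$; both loci have dimension $<5=\dim\W_w^j$, so by the irreducibility of $\W_w^j$ they meet it in proper closed subsets. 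Hence a generic $(g_1,g_2)\in\W_w^j$ generates a Zariski dense subgroup $H$ of $\SL_2(\C)$. Such an $H$ is not solvable (else $\overline H=\SL_2(\C)$ would be solvable) and not finite (else $\overline H=H\ne\SL_2(\C)$), so $\tilde{\Gamma}_w^j$ is non-solvable and infinite for every $j\ge1$.

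For the second step, recall from Proposition \ref{pr51} and its proof that $\tilde{\Gamma}_w^j\cong F_2/R_j$, where $R_j\trianglelefteq F_2$ is the set of all words $\omega$ with $\omega(g_1,g_2)=1$ for every $(g_1,g_2)\in\W_w^j$. The numbers $c_1,\dots,c_{(p-1)/2}$ are exactly the quantities $\zeta+\zeta^{-1}$ as $\zeta$ runs over the primitive $p$-th roots of unity, and they form a single orbit of $\mathrm{Gal}(\Q(\zeta_p)/\Q)\cong(\Z/p)^{\times}$. Given $j$, I would choose a field automorphism of $\overline{\Q}$ taking $c_1$ to $c_j$ and extend it to $\tau\in\mathrm{Aut}(\C/\Q)$; letting $\tau$ act entrywise on matrices defines a bijection $\theta_\tau$ of $G\times G$ onto itself (it preserves $\SL_2(\C)$, since $\det$ has integer coefficients). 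Because the function $(a,b)\mapsto\tr([a,b])$ and every word map $\tilde\omega$ are given by polynomials with integer coefficients, $\tr([\theta_\tau(g_1),\theta_\tau(g_2)])=\tau(\tr([g_1,g_2]))$ and $\tilde\omega(\theta_\tau(g_1),\theta_\tau(g_2))$ is the entrywise $\tau$-image of $\tilde\omega(g_1,g_2)$. The first identity shows $\theta_\tau(\W_w^1)=\W_w^j$, and the second shows $\omega(\theta_\tau(g_1),\theta_\tau(g_2))=1\iff\omega(g_1,g_2)=1$; together they give $R_1=R_j$, hence $\tilde{\Gamma}_w^1\cong\tilde{\Gamma}_w^j$ for all $j$.

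I expect the main obstacle to be the dimension bookkeeping in the first step for the imprimitive case: one has to rule out $\W_w^j$ being contained in the locus of pairs conjugate into $N_G(T)$, and this cannot be decided by a trace computation (a diagonal element of order $p$ does lie in $N_G(T)$), so it genuinely depends on the dimension estimate together with the irreducibility of $\W_w^j=S_{c_j}$ proved in Lemma \ref{lem44}. A heavier alternative to the second step would be to show that the generic representation in $\W_w^j$ is faithful on the one-relator group $\Gamma_w$ — so that in fact $\tilde{\Gamma}_w^j\cong\Gamma_w$ for every $j$ — using that $\Gamma_w$ is a one-relator group with torsion; I would prefer the Galois argument above since it avoids proving faithfulness.
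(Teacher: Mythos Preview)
Your proof is correct. The overall shape is the same as the paper's --- a first step showing each $\tilde{\Gamma}_w^j$ is non-solvable and infinite, and a second step using a field automorphism permuting the components --- but the first step is organized differently.

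For non-solvability, the paper argues group-theoretically: it invokes the classification of subgroups of $\SL_2(\C)$ to assert that a \emph{solvable} two-generated subgroup with commutator of order $p$ can only be $Q_{4p}$, and then uses that a finite group has only finitely many representations up to conjugacy, so the locus of such pairs has dimension $3<5=\dim\W_w^j$. Your argument instead establishes the stronger fact that the generic pair is Zariski dense, by bounding the dimension of each ``bad'' locus (pairs in a common Borel, pairs conjugate into $N_G(T)$, pairs projecting to $A_4$, $S_4$, $A_5$) and using the irreducibility and dimension of $\W_w^j=S_{c_j}$ from Lemma~\ref{lem44}. Your route is a bit more systematic and yields infiniteness and non-solvability simultaneously, at the cost of the small fibre-dimension computation for the $N_G(T)$ locus; the paper's route is shorter but leans on the subgroup classification. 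Note that your parenthetical ``all fibres have dimension $\ge 1$'' is indeed true: if $(h_1,h_2)$ lies in the image via some $g_0$, then $g_0 N_G(T)$ sits in the fibre.

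For the isomorphism step, the arguments are essentially the same Galois idea. The paper works inside $\SL_2(\overline F)$ for a purely transcendental $F/\Q$ of infinite degree, picks a single generic point in $\W_w^1(\overline F)$, and transports it by $\sigma\in\mathrm{Gal}(\overline F/F)$; you transport the whole component via $\tau\in\mathrm{Aut}(\C/\Q)$ and use the description $\tilde{\Gamma}_w^j\cong F_2/R_j$ with $R_j=\{\omega:\W_w^j\subset X_\omega\}$ from the proof of Proposition~\ref{pr51}. Both are valid; yours avoids choosing a generic point but uses the (choice-dependent) existence of automorphisms of $\C$.
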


\begin{proof}
A solvable group $\Gamma=\langle g_1, g_2\rangle $ in $\SL_2(\C)$
with the relation $[x, y] \backsim\begin{pmatrix}\epsilon_p&0\cr
0&\epsilon_p^{-1}\cr \end{pmatrix}$ can only be a generalized
quaternion group $Q_{4p}$ (this follows from the classification of
subgroups of $\SL_2(\C)$). However, a finite group may only have a
finite number of non-equivalent representation in $\SL_2(\C)$. Hence
if $\Gamma$ is the generic group of some irreducible component, then
$\Gamma$ is conjugate in $\SL_2(\C)$ to a fixed subgroup $Q_{4p}\leq
\SL_2(\C)$. It is easy to see that the dimension of such a component
is then equal to $3$, and therefore such a component is a proper
closed subset of some $\W_w^j$. Hence the generic group
$\Gamma=\tilde{\Gamma}_w^j$ of the component $\W_w^j$ cannot be
solvable.

\bigskip

Let $F\subset \C$ be a pure transcendental extension of $\Q$ of
infinite degree, and let $\overline{F}\subset \C$ be its algebraic
closure. There exists $\sigma\in \operatorname{Gal}(\overline{F}/F)$
such that $\sigma(\epsilon_p) = \epsilon_p^2$. Then
$\{\sigma^r(\epsilon_p)\}$ where $r = 1, \dots ,p-1$, is the set of
all roots $\sqrt[p]{1}\ne 1$. Further, the set $\W_w$ is
$\Q$-defined and therefore $F$-defined. Since $\overline{F}$ is an
algebraically closed field of infinite transcendence degree over
$\Q$, one can find a pair $(g_1, g_2) \in (\SL_2(\overline{F})\times
\SL_2(\overline{F})) \cap \W_w^1$ such that $\langle g_1, g_2\rangle
= \tilde{\Gamma}_w^1$ (see Proposition \ref{pr51}). Further, the set
$\W_w$ is $ \operatorname{Gal}(\overline{F}/F)$-stable. Thus
\begin{equation}
(\sigma^r(g_1), \sigma^r(g_2))  \in \W_w^j(\overline{F}) =
(\SL_2(\overline{F})\times \SL_2(\overline{F})) \cap
\W_w^j\,\,\,\,\text{for some}\,\,\,j = j(r).\label{equa3.5}
\end{equation}
Further,
\begin{equation}
[g_1, g_2] \backsim \begin{pmatrix}\epsilon_p&0\cr
0&\epsilon_p^{-1}\cr \end{pmatrix}\Rightarrow [\sigma^r(g_1),
\sigma^r(g_2)] \backsim \begin{pmatrix}\epsilon_p^{2^r}&0\cr
0&\epsilon_p^{-2^r}\cr \end{pmatrix}.\label{equa3.6}
\end{equation}
Then \eqref{equa3.5} and \eqref{equa3.6} imply that for every $j =
1, \dots , \frac{p-1}{2}$ there exists $r$ such that
$(\sigma^r(g_1), \sigma^r(g_2))\in \W_w^j(\overline{F})$.
Since $\sigma^r$ is an automorphism of $\SL_2(\overline{F})$, we get
$$\tilde{\Gamma}_w^j  = \langle \sigma^{r(j)}(g_1),
\sigma^{r(j)}(g_2)\rangle \approx \tilde{\Gamma}_w^1.$$
\end{proof}

\bigskip

Let us now consider a more complicated example.

\begin{example} \label{ex5}
$\bf w (x, y) = [ [x, y], x[x,y]x^{-1}]$.
\end{example}
We have $w\in F^2_2=[[F_2,F_2],[F_2,F_2]].$

We remind that by $w_0\in W$ we denote the non-trivial element of
the Weyl group. Note that all elements of the form $\dot
w_0=\begin{pmatrix} 0&\alpha\cr -\alpha^{-1}&0\cr \end{pmatrix}\in
N_G(T)$ whose image in $W$ is $w_0$ belong to the same conjugacy
class. We denote this class by $C_{\dot w_0}$ (and often shorten $\dot
w_0$ to $\dot w$).

\begin{theorem}
\label{pr77} Each of the varieties $\T_w$ and $\W_w$ has three
irreducible components:
$$ \T_w^0 = \T_{[x, y]}, \T_w^1 =\overline{\{g (T\times \dot w B) g^{-1}\,\,\,\mid\,\,\,g \in G\}},\T_w^2 = C_{\dot w}\times G,$$
$$\W_w^0= \T_w^0,\W_w^1 = \overline{\{g (T\times \dot w T) g^{-1}\,\,\,\mid\,\,\,g
\in G\}}\subset \T_w^1,\,\,\,\dim \W_w^1 = 4,\W_w^2 = \T_w^2.$$
\end{theorem}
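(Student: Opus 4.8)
The plan is to combine the Fricke trace identity with a case analysis on the conjugacy class of $c:=[x,y]$ and the position of $c$ relative to $c':=x[x,y]x^{-1}$, so that $w(x,y)=[c,c']$. Putting $\tau=\tr c$, $\rho=\tr(cc')$ and using $\tr c'=\tr c$, Fricke's identity $\tr[A,B]=\tr^2A+\tr^2B+\tr^2(AB)-\tr A\,\tr B\,\tr(AB)-2$ gives $\tr w=2\tau^2+\rho^2-\tau^2\rho-2$ on all of $G\times G$. Since $2\tau^2+\rho^2-\tau^2\rho-4=(\rho-2)(\rho-\tau^2+2)$, this realises $\T_w$ as the union of the two hypersurfaces $V_1=\{\rho=2\}$ and $V_2=\{\rho=\tau^2-2\}$ in $G\times G$; both defining functions are non-zero (e.g.\ $\tr w$ is non-constant, $w$ being a non-trivial word), so $V_1,V_2$ are pure of dimension $5$, and the irreducible components of $\T_w$ are exactly those of $V_1$ together with those of $V_2$. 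This is what turns the problem into a finite check at generic points.

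Next I would show the three asserted sets lie in $\T_w$ and see which hypersurface carries each. For $\T_w^0=\T_{[x,y]}=\{\tr c=2\}$: if $c$ is a non-trivial unipotent, then $yxy^{-1}=c^{-1}x$ and $xyx^{-1}=cy$ force $\tr x=\tr(c^{-1}x)$ and $\tr y=\tr(cy)$, hence $x$ and $y$ both fix the unique $c$-invariant line; so $x,y$ lie in a common Borel, $c$ and $c'$ lie in its abelian unipotent radical, and $w=1$. Thus $\T_{[x,y]}\subseteq\W_w\subseteq\T_w$ (the case $c=1$ being trivial), and evaluating $\rho,\tau$ on $\T_{[x,y]}$ gives $\T_{[x,y]}\subseteq V_1\cap V_2$. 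For $\T_w^2=C_{\dot w}\times G$: a short manipulation in the free group shows $x^2=-1$ forces $xcx^{-1}=c^{-1}$ (equivalently $x^2$ commutes with $y$), so $w=[c,c^{-1}]=1$ for every $y$; hence $C_{\dot w}\times G\subseteq\W_w$, and $\rho=\tr(cc^{-1})=2$ puts it in $V_1$. For $\T_w^1=\overline{\{g(T\times\dot wB)g^{-1}\}}$: evaluating $w$ at $(t,\dot wb)$ with $t$ a torus element of eigenvalues $\lambda,\lambda^{-1}$, one finds $c$ and $c'=tct^{-1}$ are both lower triangular with diagonal $(\lambda^2,\lambda^{-2})$, so $w\in U^-$ is unipotent and $\rho=\lambda^4+\lambda^{-4}=\tau^2-2$; hence $\T_w^1\subseteq V_2$. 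Each of $\T_{[x,y]}$, $C_{\dot w}\times G$, $\T_w^1$ is irreducible of dimension $5$ ($\T_{[x,y]}$ by Lemma~\ref{lem44}, the other two being image-closures of irreducible varieties), hence is an irreducible component of the hypersurface containing it, and the three are pairwise distinct because $\tau\equiv2$ only on $\T_{[x,y]}$ and $\tr x\equiv0$ only on $C_{\dot w}\times G$.

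It remains to rule out further components of $V_1$ and $V_2$. Let $(x,y)$ be a generic point of a $5$-dimensional component and analyse $c=[x,y]$: the loci $\{c=1\}$ and $\{c=-1\}$ have dimensions $4$ and $3$ (Proposition~\ref{pr37}, Example~\ref{ex3}) and are excluded; if $\tr c\equiv2$ the component lies in the irreducible $\T_{[x,y]}$ and so equals it; if $\tr c\equiv-2$ the component would equal the irreducible hypersurface $\{\tr c=-2\}$ (Lemma~\ref{lem44}), but there $c=-u$, $c'=-xux^{-1}$ with $u$ a non-trivial unipotent and $\rho=\tr(u\cdot xux^{-1})$, which is not identically $2$, so that set lies in neither $V_1$ nor $V_2$, a contradiction. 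So $c$ is generically regular semisimple. In that case the defining equation of $V_1$ (resp.\ $V_2$) forces $c'$, written in the eigenbasis of $c$, to be triangular with reversed (resp.\ equal) diagonal relative to $c$; equivalently $x$ carries one eigenline of $c$ to the other (resp.\ fixes an eigenline of $c$). This already forces $x$ to be regular semisimple, since for $x=\pm1$ one gets $c=1$ and for $x=\pm u$ unipotent the equation $[x,y]=c$ has no solution ($u^{-1}c$ would be both regular semisimple and conjugate to the unipotent $u^{-1}$). Now writing $x$ as a torus element in the eigenbasis of $c$ and using solvability of $[x,y]=c$ in $y$, a direct computation yields $x^2=-1$ in the $V_1$ case (so the component is $C_{\dot w}\times G$) and $y$ with vanishing $(1,1)$-entry, i.e.\ $y\in\dot wB$, in the $V_2$ case (so the component is $\T_w^1$). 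Hence $V_1=\T_{[x,y]}\cup(C_{\dot w}\times G)$, $V_2=\T_{[x,y]}\cup\T_w^1$, and $\T_w=\T_w^0\cup\T_w^1\cup\T_w^2$.

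For $\W_w$ I would run the analogous generic-point analysis directly on $\W_w=\{[c,c']=1\}\subseteq\T_w$ (Proposition~\ref{pr3333}). Since $w\equiv1$ on $\T_{[x,y]}$ and on $C_{\dot w}\times G$, we have $V_1\subseteq\W_w$, so $\W_w=V_1\cup(\W_w\cap\T_w^1)$; a component with $c\equiv1$, $c\equiv-1$, or $\tr c\equiv\pm2$ is properly contained in $\T_w^0$ or $\T_w^2$ (hence not maximal), while if $c$ is generically regular semisimple then $[c,c']=1$ forces $c'\in C_G(c)$, i.e.\ $xcx^{-1}\in\{c,c^{-1}\}$: the branch $xcx^{-1}=c$ gives $x\in C_G(c)$ and, after conjugating $x$ into $T$, pins $(x,y)$ down to $T\times\dot wT$ with $c=x^2$ (so the component is $\W_w^1=\overline{\{g(T\times\dot wT)g^{-1}\}}$, dominated by the $4$-dimensional family $\{(x,y):x\text{ regular semisimple},\ y\in N_G(T_x)\setminus T_x\}$, whence $\dim\W_w^1=4$), while the branch $xcx^{-1}=c^{-1}$ again forces $x^2=-1$. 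Thus $\W_w=\W_w^0\cup\W_w^1\cup\W_w^2$ with $\W_w^0=\T_w^0$, $\W_w^2=\T_w^2$ and $\W_w^1\subsetneq\T_w^1$ of dimension $4$, as claimed. The main obstacle is the regular-semisimple case of the component count: turning ``$c$ and $xcx^{-1}$ are triangular with equal (resp.\ reversed) diagonal'' together with ``$[x,y]=c$ is solvable in $y$'' into the exact normal forms $(t,\dot wb)$ and $x^2=-1$; everything else is bookkeeping once the Fricke factorisation has split $\T_w$ into two hypersurfaces.
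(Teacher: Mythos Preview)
Your approach is correct and genuinely different from the paper's. The key new idea is the Fricke factorisation: writing $\tau=\tr c$, $\rho=\tr(cc')$ with $c=[x,y]$, $c'=xcx^{-1}$, you get $\tr w-2=(\rho-2)(\rho-\tau^2+2)$, so $\T_w=V_1\cup V_2$ with $V_1=\{\rho=2\}$, $V_2=\{\rho=\tau^2-2\}$. This immediately explains \emph{why} there are three components (two hypersurfaces sharing $\T_{[x,y]}$) and organises the case analysis neatly. The paper instead proceeds by a chain of direct computations (Lemmas~\ref{lem78}--\ref{lem85}): it first exhibits the three candidate components, then excludes a hypothetical fourth one by showing that for a generic pair $(g_1,g_2)$ there, $g_1$ and $s=[g_1,g_2]$ are semisimple, and that $s,g_1sg_1^{-1}$ in a common Borel forces $s,g_1$ in a common Borel (Lemma~\ref{lem85}), which drives the pair back into $\T_w^0\cup\T_w^1$. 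Your route replaces this Borel argument by linear algebra in the eigenbasis of $c$: the single equation $\rho=2$ (resp.\ $\rho=\tau^2-2$) forces the diagonal of $c'$ to be reversed (resp.\ equal), hence $qr=0$, hence $c'$ triangular. The end-game---turning ``$x$ fixes/swaps an eigenline of $c$'' plus solvability of $[x,y]=c$ into the normal forms $(t,\dot wb)$ and $x^2=-1$---is then the same in spirit as the paper's, but shorter.

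One small slip to fix: your parenthetical ``$u^{-1}c$ would be regular semisimple'' is not the right reason in general. What actually works is that the $V_1$ (resp.\ $V_2$) constraint pins down the shape of $x$ in the $c$-eigenbasis, and then the conjugacy $yx^{-1}y^{-1}=x^{-1}c$ forces $\tr(x^{-1}c)=\tr(x^{-1})$; a one-line computation with that constrained shape gives $\tr(x^{-1}c)=e\mu$ (resp.\ $\mu+\mu^{-1}$ up to the analogous form), which equals $\tr(x^{-1})$ only if $\mu=1$. This both rules out $x=\pm u$ and, in the $V_1$ case, immediately yields $e=0$, i.e.\ $x^2=-1$. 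With that correction the argument is complete; everything else (including the $\tr c\equiv-2$ exclusion, which the paper handles inside Lemma~\ref{lem81} by a Borel argument and you handle by showing $\{\tr c=-2\}\not\subset V_1\cup V_2$) goes through as you wrote.
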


\begin{proof}
If $(g_1, g_2)\in \T_{[x, y]}$, then $g_1$ and $g_2$ are in the same
Borel subgroup, and therefore $w(g_1, g_2) = 1$. Hence the
$5$-dimensional irreducible variety $\T_{[x,y]}$ coincides with an
irreducible component of $\T_w$ and $\W_w$. Thus we may put
$$\W_w^0 = \T^0_w = \T_{[x,y]}.$$

We start with the following lemma.

\begin{lemma}
\label{lem78} Let $s \in T, s^4 \ne 1$, and let $h \in \dot w
B$. Then $w(s, h) = v \in U^-$ and
$$v = 1 \Leftrightarrow h = \dot w.$$
\end{lemma}

\begin{proof}
We have $h = \dot w u$ for some $\dot w$ and some  $u\in U $ and
$$w(s, h) =[ [s, \dot w u], s [ s, \dot w u] s^{-1}].$$
Further,
$$[s, \dot w u] = s\dot w  u s^{-1} u^{-1}\dot w^{-1} =( s\dot w s^{-1})
\underbrace{(s u s^{-1} u^{-1})}_{ = u^\prime \in U}\dot w^{-1}  =
\underbrace{( s\dot w s^{-1}\dot w^{-1})}_{= s^2} \underbrace{\dot w
u^\prime \dot w^{-1}}_{:= v^\prime \in U^-} = s^2 v^\prime$$ and
$v^\prime =  1\Leftrightarrow h =\dot w$. Then
$$s [ s, \dot w u] s^{-1} = s^2 \underbrace{(s v^\prime s^{-1})}_{:= v^{\prime\prime}\in U^-} = s^2 v^{\prime\prime} $$
and $v^{\prime\prime} = v^\prime  \Leftrightarrow v^\prime = 1
\Leftrightarrow h =\dot w$. Then
$$w(s, h) = [s^2 v^\prime, s^2 v^{\prime\prime}] = s^2 v^\prime
s^2 \underbrace{v^{\prime\prime} v^{\prime -1}}_{:= v_1 \in  U^-}
s^{-2} v^{\prime\prime -1} s^{-2} =  s^2 v^\prime \underbrace{s^2
v_1s^{-2}}_{:= v_2\in U^-} v^{\prime\prime -1} s^{-2} =$$
$$ =  s^2 v^\prime v_2  v^{\prime\prime -1} s^{-2} = s^2 v_2
\underbrace{v^\prime  v^{\prime\prime -1}}_{= v_1^{-1}} s^{-2} = s^2
\underbrace{v_2v_1^{-1}}_{:=v_3}s^{-2}  =  s^2 v_3s^{-2}:= v$$ and
$v = 1\Leftrightarrow v_3 = 1  \Leftrightarrow v_2 = v_1
\Leftrightarrow v^{\prime\prime} =  v^\prime\Leftrightarrow h =
\dot w$.
\end{proof}

Lemma \ref{lem78} implies that $\tr (w(s, h)) =2$ for every
$s\in T$ (if $s^4 = 1$ then $s^2 = \pm 1$ and $w(s, h) = 1$ for
every $h \in \dot w B$). Then the set
$$\overline{\{g (T\times \dot w B) g^{-1}\,\,\,\mid\,\,\,g \in G\}}$$
is contained in an irreducible component of $\T_w$. It is easy to
see that
$$\dim \overline{\{g (T\times \dot w B) g^{-1}\,\,\,\mid\,\,\,g \in G\}} = \dim T + \dim B + \dim G/T = 1 + 2 + 2 = 5,$$
and therefore the set
$$\overline{\{g (T\times \dot w B) g^{-1}\,\,\,\mid\,\,\,g \in G\}}:= \T_w^1$$
is an irreducible component of $\T_w$. Further, Lemma \ref{lem78}
implies that a general pair $(g_1, g_2)\in T\times \dot w B$
satisfies the condition $w(g_1, g_2) = 1$ if and only if $g =\dot w$
for some $\dot w$. Hence
$$\W_w^1 :=\overline{\{g (T\times \dot w T) g^{-1}\,\,\,\mid\,\,\,g \in G\}}$$
is the only irreducible component of $\W_w$ contained in $\T_w^1$,
and
$$\dim \W_w^1 =\dim T +\dim T +\dim G/T = 4. $$

\bigskip

Let $(g_1, g_2) \in C_{\dot w}\times G$. Since $g_1^2 = -1$, we have
$$ g_1[g_1,g_2]g_1^{-1} = g_1(g_1g_2g_1^{-1}g_2^{-1}) g_1^{-1} = - g_2g_1^{-1}g_2^{-1}g_1^{-1} =
- g_2(-g_1)g_2^{-1}g_1^{-1} = [g_2, g_1] = [ g_1, g_2]^{-1},$$ and
therefore
$$w(g_1,g_2) = [ [g_1, g_2], g_1[g_1,g_2]g_1^{-1}] = [[g_1, g_2],  [g_1, g_2]^{-1}] = 1.$$
Hence the $5$-dimensional variety $C_{\dot w}\times G$ coincides
with an irreducible component of $\W_w$ and also with an irreducible
component of $\T_w$. Hence we may put
$$\W_w^2 = \T_w^2 = C_{\dot w}\times G.$$

\bigskip

To prove that neither $\T_w$, nor $\W_w$ contain additional
irreducible components, we need several computational lemmas.

\begin{lemma}
\label{lem79} Let $g_1 = \begin{pmatrix} \la&0\cr 0&\la^{-1}\cr
\end{pmatrix}$, $\la \ne \pm 1$, and let
$$g_2 = \begin{pmatrix} \al&\be\cr \gam&\de\cr \end{pmatrix}.$$
Then
$$[g_1, g_2] \,\,\,\text{is a unipotent element if and only if}\,\,\, g_2 \in B\cup B^-$$
and
$$\begin{cases} [g_1, g_2]\in B\Leftrightarrow \gam = 0\,\,\,\text{or}\,\,\,\de = 0
\Leftrightarrow g_2 \in B\,\,\,\text{or}\,\,\,g_2 \in B\dot w,\\
[g_1, g_2]\in B^-\Leftrightarrow \be = 0\,\,\,\text{or}\,\,\,\al =
0\Leftrightarrow g_2 \in B^-\,\,\,\text{or}\,\,\,g_2 \in \dot
wB\end{cases}.$$
\end{lemma}

\begin{proof}
We have (see \eqref{equa135})
$$[g_1, g_2] = \begin{pmatrix}\al\de- \be\gam \la^2& &\al\be(-1+\la^2)\cr
&&\cr
 \gam\de(-1+\la^{-2})& &\al\de - \be\gam\la^{-2}\cr
\end{pmatrix}$$
and $\tr ([g_1, g_2]) = 2 -\be\gam( \la -\la^{-1})^2 $.   Therefore, $$\tr
([g_1, g_2]) = 2 \Leftrightarrow \be = 0\,\,\,\text{or}\,\,\,\gam = 0
\Leftrightarrow g_2 \in B\,\,\,\text{or}\,\,\,g_2 \in B^-.$$

Further,
$$[g_1, g_2]\in B\Leftrightarrow \gam\de(-1+\la^{-2}) = 0 \Leftrightarrow \gam = 0\,\,\,\text{or}\,\,\,\de = 0 \Leftrightarrow g_2 \in B\,\,\,\text{or}\,\,\,g_2 \in B\dot w.$$
The case $[g_1, g_2]\in B^-$ can be treated by the same arguments.
\end{proof}

\begin{lemma}
\label{lem101} Let $u$ be a non-trivial unipotent element of $G$,
and let $C_{\pm u}$ be the conjugacy class of $\pm u$. Then $\T_w^j
\ne C_{\pm u}\times G$.
\end{lemma}

\begin{proof}
Obviously, we may consider the case $C_u$ where $u =
\begin{pmatrix}1&1\cr 0&1\cr \end{pmatrix}$. Take
$$ g = \begin{pmatrix}  i\frac{\sqrt{2}}{2}&-i \frac{\sqrt{2}}{2}\cr -i\sqrt{2}&0\cr\end{pmatrix}.$$
Then
$$z = [u, g] = \begin{pmatrix}1&1\cr 0&1\cr \end{pmatrix}\begin{pmatrix}  i\frac{\sqrt{2}}{2}&-i \frac{\sqrt{2}}{2}\cr -i\sqrt{2}&0\cr\end{pmatrix}
\begin{pmatrix}1&-1\cr 0&1\cr \end{pmatrix} \begin{pmatrix}  0&i \frac{\sqrt{2}}{2}\cr i\sqrt{2}& i\frac{\sqrt{2}}{2}\cr\end{pmatrix} = $$$$= \begin{pmatrix}  -i\frac{\sqrt{2}}{2}&-i \frac{\sqrt{2}}{2}\cr -i\sqrt{2}&0\cr\end{pmatrix}
 \begin{pmatrix}  -i\sqrt{2}&0\cr i\sqrt{2}& i\frac{\sqrt{2}}{2}\cr\end{pmatrix} = \begin{pmatrix}0&\frac{1}{2}\cr
- 2&0\cr \end{pmatrix}.$$ Further,
$$w(u, g) = [ z, u z u^{-1}]  =( z u z u^{-1}) (z^{-1} uz^{-1} u^{-1}) =  ( z u z u^{-1})((- z) u( -z ) u^{-1})$$$$=
( z u z u^{-1})^2 = \left(\begin{pmatrix}0&\frac{1}{2}\cr -
2&0\cr \end{pmatrix}\begin{pmatrix}1&1\cr 0&1\cr
\end{pmatrix}\begin{pmatrix}0&\frac{1}{2}\cr -2&0\cr
\end{pmatrix}\begin{pmatrix}1&-1\cr 0&1\cr \end{pmatrix}\right)^2 =$$
$$= \left(\begin{pmatrix}0&\frac{1}{2}\cr
-2&-2\cr \end{pmatrix}\begin{pmatrix}0&\frac{1}{2}\cr -2&2\cr
\end{pmatrix}\right)^2 = \begin{pmatrix}-1&1\cr 4&-5\cr \end{pmatrix}^2.$$
The latter matrix is not unipotent because its trace $\ne 2$. Thus
we get a contradiction with our assumption.
\end{proof}

\begin{lemma}
\label{lem81} Let $\T_w^j \ne \T_w^0, \T_w^1, \T_w^2$ be an
irreducible component of $\T_w$, and let $(g_1, g_2)\in \T_w^j$ be a
general pair. Then $g_1$  is a semisimple element of  order $\ne 1,
2, 4$, $[g_1, g_2]$ is a semisimple element, and $[g_1, g_2] \ne \pm
1$.
\end{lemma}

\begin{proof}
Suppose that the projection of $\T_w^j \subset G\times G$ onto the
first component is contained in a single conjugacy class $C$ of $G$.
Since $\T_w^j$ is a closed subset invariant under $G$-conjugation
and $\dim T_w^j = 5$, we have $\T_w^j = C\times G$. Lemma
\ref{lem101} implies that $C$ is a semisimple class of order $\ne 1,
2$. Also, it cannot be of order $4$ because this would imply $\T_w^j
= \T_w^1$. So the order of $g_1$ is not equal to $1, 2, 4$.

Suppose that $\T_w^j \ne  C\times G$.

Let $\pr_1 \colon G\times G\rightarrow G$ be the projection onto the
first component. Then the set $\pr_1(\T_w^j)$ is not contained in a
single conjugacy class. Since $\overline{\tr (\pr_1(\T_w^j))}$ is an
irreducible closed subset of $K$, we have $\overline{\tr
(\pr_1(\T_w^j))} = K$, and therefore for a general pair $(g_1, g_2)
\in \T_w^j$, $g_1$ is a semisimple element of infinite order.

\bigskip

We may now assume $g_1 = t \in T$ and $t^4 \ne 1$.  Let $(t, g) \in
\T^j_w$ be a general pair. Suppose $[t,g]$ is a unipotent element.
Then $g \in B$ or $g \in B^-$ by Lemma \ref{lem79}. Hence $(t, g)
\in T\times B$ or $(t, g) \in T\times B^-$, and therefore
$$(t, g) \in \{h (B\times B )h^{-1}\,\,\,\mid\,\,\, h \in G\} \subset  \T_{[x, y]}^0.$$
This contradicts the assumption $\T_w^j \ne \T_w^0$. Thus $[g_1,
g_2]$ is a semisimple element (if $[g_1,
g_2] = [t, g] = - u$ where $u\ne 1$ is a unipotent element, then $-u$ and  $-tu t^{-1}$ belong to different Borel  subgroups by (\ref{equa131}), (\ref{equa132}), and therefore $(g_1, g_2) = (t, g)\notin \T_w$ by Lemma \ref{lem79}).

\bigskip

Suppose now $[ t,g] =  1$.  Then the general pair $(t, g)$ belongs
to $ T\times T$ and therefore $(t, g) \in \W_{[x,y]}^0 \subset
\T_w^0$, once again contradicting the assumption $\T_w^j \ne
\T_w^0$.

Suppose  $[ t,g] =  -1$. We have (see \eqref{equa135})
$$[t, g] = \begin{pmatrix}\al\de- \be\gam\la^2& &\al\be(-1+\la^2)\cr
&&\cr
 \gam\de(-1+\la^{-2})& &\al\de - \be\gam\la^{-2}\cr
\end{pmatrix} = \begin{pmatrix}-1&0\cr 0&-1\cr \end{pmatrix}\Rightarrow$$
$$ t =  \pm \begin{pmatrix}i&0\cr 0&-i\cr \end{pmatrix},\,\,\,g = \begin{pmatrix} 0&\mu\cr -\mu^{-1}&0\cr \end{pmatrix}.$$
This is a contradiction with the choice of $t$.
\end{proof}

\begin{lemma}
\label{lem85} Let $g\in G$ be a semisimple element of order
different from $1, 2, 4$, let $h\in G$, and suppose that $s =
[g,h]$ is a semisimple element $\ne \pm 1$. If $s$ and
$gsg^{-1}$ are in the same Borel subgroup, then $s$ and $g$ are also
in the same Borel subgroup.
\end{lemma}

\begin{proof}
We may assume $s\in T$ and $s, gsg^{-1}\in B$. If $g\notin B$, then
$g = b_1\dot w b_2$  for some $b_1, b_2\in B$ and
$$g s g^{-1}\in B\Leftrightarrow b_1\dot w b_2 sb_2^{-1} \dot w^{-1}b_1^{-1}\in B\Leftrightarrow \dot w b_2 sb_2^{-1} \dot w^{-1}\in B\Leftrightarrow b_2 \in T.$$
Thus, if $s, gsg^{-1}\in B$ and $g\notin B$ then $g = b\dot w$ for
some $b \in B$. Then we have
\begin{equation}
[ b\dot w, h] = s \in T\label{equa3.8}
\end{equation}
for some $b \in B, s \in T$ and $\gamma \in G$. The assumption that
$g = b\dot w$ is an element of order $\ne 4$  implies that $b \ne
1$. Conjugating both sides of \eqref{equa3.8} with an appropriate
element $s^\prime\in T$, we can get
\begin{equation}
\left[\begin{pmatrix}\rho &1\cr
 -1&0\cr \end{pmatrix},
  \begin{pmatrix} \al&\be\cr
  \gam&\de\cr \end{pmatrix}\right] = \begin{pmatrix}\xi&0\cr 0&\xi^{-1}\cr
  \end{pmatrix}\label{equa3.9}
\end{equation}
for some $\rho , \al , \be , \gam ,\de , \xi\in \C$. A straightforward calculation shows
that
\begin{equation}
\left[\begin{pmatrix}\rho &1\cr
 -1&0\cr \end{pmatrix},
  \begin{pmatrix} \al&\be\cr
  \gam&\de\cr \end{pmatrix}\right] = \begin{pmatrix} \ast&\ast\cr
  -\al\gam - \de\be + \rho\be\gam&\ast\cr \end{pmatrix}\label{equa3.10}
\end{equation}
Then \eqref{equa3.9} and \eqref{equa3.10} imply that $ -\al\gam - \de\be + \rho\be\gam
=0$. Since $\al\de - \be\gam = 1$, we have
\begin{equation}
\begin{cases}-\al\gam - \de\be + \rho\be\gam=0\\ \al\de - \be\gam = 1\end{cases}\Rightarrow \begin{cases} \gam =- \frac{\be}{\al^2+\be^2- \rho\al\be}\\ \de
=\frac{\al-\rho\be}{\al^2+\be^2-\rho\al\be}\end{cases}.\label{equa3.11}
\end{equation}
(We omit the case $\al^2 +\be^2 -\rho\al\be =0$ because in this case we get $\be = 0$ and then $\gamma =0$. But then $h\in T$ is a semismple element and therefore
$$[b\dot w , h] = b\underbrace{\dot w h \dot w^{-1}}_{=h^{-1}} b^{-1} h^{-1} = bh^{-1}b^{-1} h^{-1} =\underbrace{[b, h^{-1}]}_{\notin T} h^{-2} \notin T.$$
But according to our assumption, $[g, h]\in T$.)

We now substitute \eqref{equa3.11} into \eqref{equa3.10} and obtain
\begin{equation}
\left[\begin{pmatrix}\rho &1\cr
 -1&0\cr \end{pmatrix},
  \begin{pmatrix} \al&\be\cr
  - \frac{\be}{\al^2+\be^2- \rho\al\be} & \frac{\al-\rho\be}{\al^2+\be^2- \rho\al\be}\cr \end{pmatrix}\right] = \begin{pmatrix} \ast&\rho (1 - (\al^2 +\be^2 - \rho\al\be))\cr
  0&\al^2+\be^2-\rho\al\be\cr \end{pmatrix}.\label{equa3.12}
\end{equation}
Comparing \eqref{equa3.9} and \eqref{equa3.12}, we get $\rho(1 - (\al^2
+\be^2 -\rho\al\be) )=0$. According to our assumption, $\rho\ne 0$. Therefore
$\al^2+\be^2 -\rho\al\be = 1$, and we get as the commutator in \eqref{equa3.12}
a matrix of the form
  $$ s = \begin{pmatrix}\xi&0\cr 0&\xi^{-1}\cr \end{pmatrix}= \begin{pmatrix}\ast&0\cr 0 &1\cr \end{pmatrix}.$$
Hence $s = 1$. This is a contradiction with our assumption. Thus,
$s, g \in B$. The lemma is proved.
\end{proof}

We now continue the proof of the theorem by showing that $\T_w$ has
only three irreducible components $\T_w^0, \T_w^1, \T_w^2$. Indeed,
assume that there is a component $\T_w^3$. Lemma \ref{lem81} implies
that a general pair $(g_1, g_2)\in \T^3_w$ satisfies the following
conditions: $g_1, [g_1, g_2]$ are semisimple elements where the
order of $g_1$ is not 1, 2, or 4 and $[g_1, g_2]\ne \pm 1$. Further,
since $\tr (w(g_1, g_2) ) = 2$, the element
$$w(g_1, g_2) =  [ [g_1, g_2], g_1[g_1,g_2]g_1^{-1}]$$
is unipotent, and the elements $[g_1, g_2], g_1[g_1,g_2]g_1^{-1}$
belong to the same Borel subgroup according to Lemma \ref{lem79}.
Then Lemma \ref{lem85} implies that $[g_1, g_2], g_1$ also belong to
the same Borel subgroup. We may assume $g_1 \in T$ and $[g_1, g_2] \in B$. Then applying
Lemma \ref{lem79} once again, we get $g_2 \in B$ or $g_2 \in B
\dot w$. Thus $(g_1, g_2) \in \T_w^0$ or  $(g_1, g_2) \in \T_w^1$ (note that the pair $(t , b\dot w)$, where $t \in T$ and $b \in B$, belongs to $\T_w^1$ because $\dot w(t, b\dot w) \dot w^{-1} = (t^{-1}, \dot w b)$). This contradicts the assumption
that $(g_1, g_2)$ is a general pair in $\T_w^3\ne\T_w^0,  \T_w^1$.

Since $\W_w^0 = \T_w^0, \W^2_w = \T_w^2$ and $\W_w^1$ is the only
irreducible component of $\W_w$ contained in $\T_w^1$, we conclude
that $\W_w$ also contains only three irreducible components. The
theorem is proved.
\end{proof}

\begin{cor}
Let $w$ be a word appearing in any of Examples \ref{ex1}, \ref{ex2},
\ref{ex3}, \ref{ex4}, \ref{ex5}. Then the induced map $\w\colon \PSL
(2,\mathbb C)^2\to\PSL (2,\mathbb C)$ is surjective.
\end{cor}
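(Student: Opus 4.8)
The plan is to obtain surjectivity on $\PSL_2(\C)$ by combining two facts established above: a Bandman--Zarhin-type bound on the image of $\w$ on $\SL_2(\C)$, and the presence of a non-trivial unipotent element in that image. First I would note that each word $w$ occurring in Examples \ref{ex1}--\ref{ex5} lies in $[F_2,F_2]$ and is in particular non-trivial, so Corollary \ref{cor2} applies: the image of $\w\colon\SL_2(\C)^2\to\SL_2(\C)$ contains every semisimple element of $\SL_2(\C)$ except possibly $-1$; it also contains $1=w(1,1)$.

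For the second fact I would invoke Proposition \ref{pr3333}(b): if $\W_w\ne\T_w$, then $\Imm\,\w$ on $\SL_2(\C)$ contains a non-trivial unipotent element. The descriptions of the irreducible components of $\W_w$ and $\T_w$ obtained in Examples \ref{ex1}--\ref{ex5} show that $\W_w\subsetneq\T_w$ in all five cases: in Example \ref{ex1}, $\dim\W_w=4<5=\dim\T_w$; in Examples \ref{ex2} and \ref{ex4}, the component $\T_{[x,y]}$ strictly contains $\W_{[x,y]}$ while the other components of $\W_w$ coincide with those of $\T_w$; in Example \ref{ex3}, $\W_w^0\subsetneq\T_w^0$ and $\W_w^1\subsetneq\T_w^1$; and in Example \ref{ex5}, $\W_w^1\subsetneq\T_w^1$ (of dimensions $4$ and $5$). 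So in each case $\Imm\,\w$ on $\SL_2(\C)$ contains a non-trivial unipotent $u$; being conjugation-invariant, it then contains the whole conjugacy class of $u$, which over $\C$ is the set of all non-trivial unipotent elements of $\SL_2(\C)$.

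Finally I would transfer this to $\PSL_2(\C)$ using the quotient homomorphism $q\colon\SL_2(\C)\to\PSL_2(\C)$: since $q$ is a homomorphism and $q\times q$ is surjective, $\Imm\,\w_{\PSL_2}=q(\Imm\,\w_{\SL_2})$. Any $\bar g\in\PSL_2(\C)$ lifts to some $g\in\SL_2(\C)$ that is either semisimple or of the form $\pm u$ with $u$ a non-trivial unipotent; if $g$ is semisimple and $g\ne-1$ then $g\in\Imm\,\w_{\SL_2}$, if $g=-1$ then $\bar g=q(1)$ with $1\in\Imm\,\w_{\SL_2}$, and if $g=\pm u$ then $\bar g=q(u)\in q(\Imm\,\w_{\SL_2})$; in every case $\bar g\in q(\Imm\,\w_{\SL_2})$. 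Hence $q(\Imm\,\w_{\SL_2})=\PSL_2(\C)$, which is the assertion. I expect no serious obstacle to remain: the substantive input --- the component computations (hence $\W_w\ne\T_w$) and the Bandman--Zarhin-type bound on the $\SL_2(\C)$-image --- is already in place, and the only point needing care is the bookkeeping of the last step, namely that the single element $-1$ possibly absent from the $\SL_2(\C)$-image becomes the identity of $\PSL_2(\C)$, and that one non-trivial unipotent element is enough because $\PSL_2(\C)$ has exactly one non-trivial unipotent conjugacy class.
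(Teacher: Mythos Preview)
Your proposal is correct and follows essentially the same route as the paper: apply Corollary~\ref{cor2} to cover all semisimple elements, use the component computations to show $\W_w\ne\T_w$ (the paper phrases this as ``a $4$-dimensional component of $\W_w$ exists in each case''), invoke Proposition~\ref{pr3333}(b) to obtain a non-trivial unipotent in the image, and conclude since every element of $\PSL_2(\C)$ is semisimple or unipotent. Your extra care in passing from $\SL_2$ to $\PSL_2$ via the quotient $q$ is fine and more explicit than the paper's one-line wrap-up.
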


\begin{proof}
By Corollary \ref{cor2}, the image of $\w$ contains all semisimple
elements of $\PSL (2,\mathbb C)$. The computations of this section
provide a 4-dimensional component of $\W_w$ for each $w$, hence
guarantee that all unipotent elements lie in the image of $\w$. As
every element of $\PSL (2,\mathbb C)$ is either semisimple, or
unipotent, we are done.
\end{proof}

\begin{remark}
Note that to prove the corollary, we do not need the complete lists
of the irreducible components of the varieties $\W_w$ (whose
computation may be technically involved enough, as in Example
\ref{ex5}). It suffices to find a 4-dimensional component, which is
much easier.
\end{remark}

\begin{remark}
Towards computer-aided search of eventual examples of words $w$ such
that $G=\SL (2,\mathbb C)$ contains unipotents lying outside the
image of $\w$, it would be important to make the calculation of
irreducible components faster. Towards this end, it makes sense to
replace the representation variety $\W_w$ with the character variety
$\mathcal X_w=\W_w\sslash G$.
\end{remark}

\begin{remark}
An approach to proving that all unipotent elements lie in the
image of a word map $\w$ where $w \notin F_m^2$  for every simple algebraic group over fields of characteristic zero is based on the Magnus embedding theorem,
combined with the Jacobson--Morozov theorem, see \cite{BZ}.
\end{remark}

\begin{remark}
It is tempting to use representation varieties of associative and
Lie algebras (see, e.g., \cite[Remark 1.5]{Na}) to study the images
of polynomial maps on such algebras, with an eye towards solving some
problems raised in \cite{KBMR}, \cite{BGKP}, \cite{KBKP}.
\end{remark}

\bigskip

\noindent{\it Acknowledgements.} The research of the first author
was supported by the Ministry of Education and Science of the
Russian Federation and RFBR grant 14-01-00820. The research of the
second and third authors was supported by ISF grants 1207/12,
1623/16 and the Emmy Noether Research Institute for Mathematics.

We thank M. Borovoi for providing the reference \cite{GOV}. We are grateful
to the referee for careful reading and useful remarks.

\end{document}